\documentclass[12pt,reqno,oneside]{amsart}
\usepackage[a4paper,marginparwidth=2.5cm]{geometry}
\usepackage[utf8]{inputenc}
\usepackage{amsthm}
\usepackage{amsmath}
\usepackage{amsfonts}
\usepackage{mathtools}
\usepackage{enumitem}
\usepackage{marginnote}
\usepackage[dvipsnames]{xcolor}
\usepackage{hyperref}
\usepackage{amssymb}

\overfullrule 1mm
\setlist[enumerate]{label=(\alph*)}
\numberwithin{equation}{section}

\makeatletter
\renewcommand\subsection{\@startsection{subsection}{2}%
  \z@{-.5\linespacing\@plus-.7\linespacing}{.5\linespacing}%
  {\normalfont\scshape}}
\renewcommand\subsubsection{\@startsection{subsubsection}{3}%
  \z@{.5\linespacing\@plus.7\linespacing}{-.5em}%
  {\normalfont\scshape}}
\makeatother

\definecolor{Darkgreen}{rgb}{0,0.4,0}
\hypersetup{
  colorlinks,
  linkcolor=Darkgreen,
  citecolor=Darkgreen,
  breaklinks=true,
  bookmarksnumbered=true,
  plainpages=false,
  unicode=true,
  pdfauthor={Jiří Černý, Flavio Dalessi},
  pdftitle={Tightness of the maximum of branching random walk in random
    environment and zero-crossings of solutions to discrete parabolic differential equations}
}

\newtheorem{theorem}{Theorem}[section]
\newtheorem{proposition}[theorem]{Proposition}
\newtheorem{lemma}[theorem]{Lemma}
\newtheorem{corollary}[theorem]{Corollary}
\newtheorem{claim}[theorem]{Claim}

\theoremstyle{definition}

\theoremstyle{remark}
\newtheorem{remark}[theorem]{Remark}

\newcommand{\R}{\mathbb{R}}

\newcommand{\N}{\mathbb{N}}
\newcommand{\Z}{\mathbb{Z}}

\newcommand{\PP}{\mathbb{P}}
\newcommand{\E}{\mathbb{E}}
\newcommand{\OZ}{\overline{\mathbb{Z}}}

\newcommand{\es}{\mathbf{es}}
\newcommand{\ei}{\mathbf{ei}}
\newcommand{\bbone}{\boldsymbol 1}

\let\phi\varphi

\DeclareMathOperator{\sgn}{sgn}

\DeclareMathOperator{\essinf}{ess\,inf}
\DeclareMathOperator{\esssup}{ess\,sup}

\DeclarePairedDelimiter{\ceil}{\lceil}{\rceil}
\DeclarePairedDelimiter{\floor}{\lfloor}{\rfloor}
\DeclarePairedDelimiter{\norm}{\lVert}{\rVert}

\DeclarePairedDelimiterX{\inp}[2]{\langle}{\rangle}{#1, #2}

\begin{document}

\title[Tightness of the maximum of BRWRE and zero-crossings]
{Tightness of the maximum of branching random walk in random environment
  and zero-crossings of solutions to discrete parabolic differential equations}

\author{Jiří Černý, Flavio Dalessi}

\begin{abstract}
    We study branching random walk on $\mathbb Z$ in a bounded i.i.d.~random
    environment. For this process, we prove that, for almost every realization
    of the environment, the distributions of the maximally displaced particle
    (re-centered around their medians) are tight. This extends the result of
    \cite{Kri24}, where tightness was established in the annealed sense, and
    of \cite{CDO25}, where a similar quenched result was proved for branching
    Brownian motion in random environment. Our proof relies on studying
    certain discrete-space linear PDEs and establishing that the number of
    zero-crossings of their solutions is non-increasing in time. We observe
    that our technique requires no additional assumptions on the environment,
    in contrast to \cite{Kri24,CDO25}.
\end{abstract}

\maketitle

\section{Introduction}
\label{sec:introduction}

The purpose of this paper is twofold. The first objective is to establish
quenched tightness of the maximum of a branching random walk in a random
environment (BRWRE), that is, to show that, for almost every realization of
the environment, the distributions of the maximally displaced particle at
time $t\geq 0$---when re-centered around their respective quenched
medians---form a tight family.

The second objective is to analyze the solutions to the
discrete parabolic equation
\begin{equation}
  \label{eq:main}
  \partial_tu(t,x)
  = \frac{1}{2} \Delta_d u(t,x) - \kappa(t,x)u(t,x), \quad x \in \Z, \ t >0,
\end{equation}
for a given bounded measurable $\kappa : [0,\infty) \times \Z \rightarrow \R$,
and show that the number of their zero-crossings decreases in time.
This analysis is primarily motivated by its role in the proof of the
tightness result. By controlling the zero-crossings of the solutions to
\eqref{eq:main}, we can adapt the arguments of \cite{CDO25}---where quenched
tightness of the maximum of branching Brownian motion in a random
environment (BBMRE) was established---and deduce tightness for the maximum of
the BRWRE.

We note that the monotonicity of the number of zero-crossings is a property
well known to hold for a broad class of one-dimensional second-order linear
parabolic partial differential equations on the \emph{real line} (see, for
  example, \cite{Ang88}, \cite{Nad15}). To prove that this monotonicity
property also holds in the discrete setting, we adapt the probabilistic
arguments developed for the real-line setting in \cite{EW99}.

We briefly describe the organization of the paper. To make our main objective
precise, in Section~\ref{sec:tightness_result} we introduce the BRWRE and
state our first result: tightness of its maximum. In Section~\ref{sec:model},
we state our second main result, which characterizes the zero-crossings of
the solution to \eqref{eq:main}. By using this analytical tool, in
Sections~\ref{sec:F_KPP}, \ref{sec:tilting} and
\ref{sec:proof_prop_and_lemma}, we prove tightness of the maximum of the
BRWRE. The proof of the zero-crossings result is given in
Section~\ref{sec:proof_zero_crossings}. More details on the structure of
these proofs will be given in Section~\ref{sec:F_KPP}.

\section{Tightness of the maximum of branching random walk in random environment}
\label{sec:tightness_result}

This section is dedicated to a precise formulation of our main result, which
establishes tightness, after appropriate re-centering, of the maximum of the
BRWRE. We adopt the same setting as in \cite{CD20}, where an invariance
principle for the maximum of the BRWRE was proved. Specifically, we fix a
random environment consisting of an i.i.d.~family $(\xi(x))_{x \in \Z}$ of
random variables satisfying
\begin{equation}
  \label{eq:ellipticity}
  0< \ei \coloneq \essinf \xi(0)
  < \esssup \xi(0) \eqcolon \es< \infty.
\end{equation}
Given a fixed starting point $x \in \Z$ and a realization of the environment,
we consider the following continuous-time branching process: At time zero we
initialize the process by placing a particle at $x$, which then moves
according to a continuous-time simple random walk with jump rate one and,
independently, branches into two distinct particles at rate $\xi(y)$ when
located at a site $y \in \Z$. Upon branching, the offspring particles evolve
as independent and identically distributed copies, following the same
diffusion and branching mechanism as the original particle. We write
$\mathtt{P}_x^\xi$ for the law of this branching process and
$\mathtt{E}_x^\xi$ for the corresponding expectation. The law of the
environment and its expectation are denoted by $\PP$ and $\E$.

The BRWRE obtained with this construction represents the discrete-space
analog of the BBMRE considered in \cite{CDO25}. While the results in
\cite{CDO25} are presented for a general offspring distribution, we restrict
our attention to the binary branching case for simplicity, similarly to
\cite{CD20}. Nevertheless, our analysis can readily be extended to the
general setting.

Let $M(t)$ denote the position at time $t \geq 0$ of the rightmost particle
of the BRWRE, and
$m(t)\coloneq \sup\{y \in \Z: \mathtt{P}_x^\xi(M(t)\geq y) \geq
  \frac{1}{2}\}$
be its median under the measure $\mathtt{P}_x^\xi$. Note
that the median $m(t)$ is random, as it depends on the realization of the
environment $\xi$. Our tightness result reads as follows.

\begin{theorem}\label{thm:tightness}
    For $\mathbb P$-almost every realization of the environment, the family
    $(M(t)- m(t) )_{t \geq 0}$ is tight under $\mathtt{P}_0^\xi$.
\end{theorem}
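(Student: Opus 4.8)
The plan is to translate the statement into a question about the discrete F-KPP equation attached to the BRWRE and then to control its solution near the median. For $a\in\Z$ set $q_a(t,x)\defeq\mathtt P^\xi_x(M(t)\ge a)$. Conditioning on the first jump or branching event of the particle started at $x$ shows that, in the fixed environment $\xi$,
\begin{equation*}
  \partial_t q_a(t,x)=\tfrac12\Delta_d q_a(t,x)+\xi(x)\bigl(q_a(t,x)-q_a(t,x)^2\bigr),
  \qquad q_a(0,\cdot)=\bbone_{\{\,\cdot\,\ge a\}},
\end{equation*}
so that all the functions $q_a$ are solutions of one and the same semilinear equation, take values in $[0,1]$, and are non-increasing in $a$. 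Since $M(t)-m(t)$ is $\Z$-valued, Theorem~\ref{thm:tightness} is equivalent to: for $\PP$-a.e.\ $\xi$ and every $\delta\in(0,1)$ there is $y_\delta=y_\delta(\xi,\delta)<\infty$, independent of $t$, with
\begin{equation*}
  q_{m(t)+y_\delta}(t,0)\le\delta
  \qquad\text{and}\qquad
  q_{m(t)-y_\delta}(t,0)\ge 1-\delta,\qquad t\ge0 .
\end{equation*}
In Section~\ref{sec:F_KPP} I would carry out this reduction, assemble the tools used repeatedly---the comparison principle for the equation above, the linear domination $q_a\le\phi_a$ with $\phi_a$ the solution of $\partial_t\phi=\tfrac12\Delta_d\phi+\xi\phi$ with the same initial datum, and the properties of $m$ (linear growth, bounded increments) available from \cite{CD20}---and state the Proposition and Lemma, proved in Sections~\ref{sec:tilting} and~\ref{sec:proof_prop_and_lemma}, that yield the two displayed bounds.

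The \emph{upper bound} $q_{m(t)+y}(t,0)\le\delta$ is the easier one and I would obtain it by a first-moment estimate. With $Z_z(t)\defeq\#\{i:X_i(t)\ge z\}$, the many-to-one identity gives $q_z(t,0)\le\mathtt E^\xi_0[Z_z(t)]=\mathtt E^{\mathrm{RW}}_0\bigl[e^{\int_0^t\xi(Y_s)\D s}\bbone_{\{Y_t\ge z\}}\bigr]$, where $Y$ is the continuous-time simple random walk; inserting $z=m(t)+y$ and combining the linear growth of $m$, the bound $\xi\le\es$ and a ballot/barrier estimate for $Y$ yields $q_{m(t)+y}(t,0)\le Ce^{-cy}$ uniformly in $t$. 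This is the discrete analogue of the corresponding step in \cite{CD20,CDO25} and requires no new idea.

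The \emph{lower bound} $q_{m(t)-y_\delta}(t,0)\ge1-\delta$ is the crux, and it is here that the zero-crossing result of Section~\ref{sec:model} enters. It is reached in two steps. First, in Section~\ref{sec:tilting}, a spine change of measure (``tilting'') produces a lower bound uniform in $t$: for every $y$ there is $c_y>0$ with $q_{m(t)-y}(t,0)\ge c_y$ for all $t\ge0$ and $\PP$-a.e.\ $\xi$; this follows by estimating a suitably truncated first and second moment of $Z_{m(t)-y}(t)$ and applying the Paley--Zygmund inequality, the ellipticity \eqref{eq:ellipticity} being used to control the exponential weight carried along the spine. Second, in Section~\ref{sec:proof_prop_and_lemma}, ``$\ge c_y$'' is amplified to ``$\ge1-\delta$'': looking at the population at a time slightly before $t$, the subtrees rooted at distinct particles are independent BRWRE's as seen from their positions, so $q_{m(t)-y_\delta}(t,0)$ is forced close to $1$ once, with high probability, many particles sit far enough behind the front at that earlier time---the latter being obtained by iterating the construction. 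For the bookkeeping of ``behind the front'' to be robust one needs a-priori control on the shape of the spatial profile $x\mapsto q_0(t,x)=\mathtt P^\xi_x(M(t)\ge0)$: in a homogeneous medium this profile is monotone, but in the random medium it need not be, which is exactly why an extra hypothesis on the environment appears in \cite{CDO25}. Here this control comes for free: the difference $q_1-q_2$ of any two solutions of the semilinear equation solves the linear equation $\partial_tu=\tfrac12\Delta_du-\kappa(t,x)u$ of \eqref{eq:main} with $\kappa(t,x)=-\xi(x)\bigl(1-q_1(t,x)-q_2(t,x)\bigr)$, bounded since $q_1,q_2\in[0,1]$ and $\xi$ is bounded; comparing $q_0$ with the solution $\tilde q_\ell$ of the semilinear equation started from the constant $\ell\in(0,1)$, the difference $q_0-\tilde q_\ell$ solves \eqref{eq:main}, starts with a single zero-crossing (it is $\ell{-}1<0$ for $x<0$ and $1{-}\ell>0$ for $x\ge0$) and has the same limiting signs at $\pm\infty$ for all $t$ because $\ell\le\tilde q_\ell(t,\cdot)<1$ while $q_0(t,\cdot)\to0,1$ at $\mp\infty$; hence by the result of Section~\ref{sec:model} it keeps a single zero-crossing, so $q_0(t,\cdot)\ge\ell$ to the right of a single site $x^\ast_\ell(t)$. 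Combined with the upper bound and the tilting estimate this pins down, uniformly in $t$, the width of the region where $x\mapsto q_0(t,x)$ is not close to $0$ or to $1$, which is the structural ingredient that lets the argument of \cite{CDO25} run with no assumption on $\xi$.

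The step I expect to be the main obstacle is precisely this combination in the lower tail: making both the moment estimates of the tilting step and the front-shape control uniform in $t$ and valid for a.e.\ (rather than a typical averaged) environment, and extracting from the mere monotonicity of the zero-crossing count a quantitative regularity of the profile strong enough to drive the branching amplification. The uniform ellipticity \eqref{eq:ellipticity} is used essentially at several of these places. Once they are in hand, assembling the upper and lower bounds gives Theorem~\ref{thm:tightness}.
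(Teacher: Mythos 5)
Your high-level architecture matches the paper's (F-KPP representation, reduction to a uniform-in-$t$ width bound for the front, time-amplification by branching, a tilted measure for the far-left regime, and the zero-crossing theorem to propagate a comparison), but the two places where you invoke the zero-crossing result and where the actual width bound is produced contain genuine gaps. First, your comparison of $q_0$ with the solution $\tilde q_\ell$ started from the constant $\ell$ is not a legitimate application of Theorem~\ref{Thm:zero-crossings}: that theorem requires the initial datum of the linear equation to lie in $\ell^1(\Z)$, and $q_0(0,\cdot)-\ell$ equals $-\ell$ on all of $-\N$ and $1-\ell$ on $\N$, so it is nowhere near summable. The paper avoids this by comparing two Heaviside-type F-KPP solutions, one shifted in space and the other in time, namely $W(s,x)=w^{x_t-\Delta}(s,x)-w^{x_t}(s+u,x)$, whose initial datum $\bbone_{x\ge x_t-\Delta}-\mathtt P^\xi_x(M(u)\ge x_t)$ is shown in Claim~\ref{claim:u_0} to satisfy both \eqref{eq:cond_u_0} and the $\ell^1$ condition. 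Second, even granting your quasi-monotonicity of the level sets of $x\mapsto q_0(t,x)$, that alone does not bound, uniformly in $t$, the distance between the $\ell$- and $\ell'$-level crossings, which is exactly the quantitative content of tightness; your sketch asserts that this "is pinned down" by combining the monotonicity with the upper bound and the tilting estimate, but supplies no mechanism. The paper's mechanism is specific: the extra time $u$ from the amplification step (its Lemma~\ref{lemma:wave_time}) is traded for a spatial shift $\Delta$ by establishing $W(t,x^*)\ge 0$ at a single far-left point $x^*\approx x_t-\Delta-vt$ via the tilted-walk large-deviation estimate (Lemma~\ref{lemma:wave}), and then propagating the sign to $x=0$ using the second, "single crossing stays ordered" part of Theorem~\ref{Thm:zero-crossings}. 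This space--time comparison is the missing idea in your proposal.

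Two smaller points. Your upper-tail bound $q_{m(t)+y}(t,0)\le Ce^{-cy}$ uniformly in $t$ via the many-to-one formula is not justified: it presupposes that $m(t)$ sits at or beyond the first-moment front, which, absent the concavity assumption $v_0>v_c$, is precisely what is unknown (the paper explicitly states that its method does not yield such decay estimates). It is also unnecessary: once one shows $w^{x_t-\Delta}(t,0)\ge 1-\varepsilon$ for the $\varepsilon$-quantile $x_t$, the sandwich $x_t-\Delta\le m(t)\le x_t$ controls both tails at level $\varepsilon$ with no moment computation. Similarly, your Paley--Zygmund step for $q_{m(t)-y}(t,0)\ge c_y$ is superfluous (this is $\ge 1/2$ by the definition of the median) and, if pursued, would run into the second-moment issues that the strict-concavity hypothesis is normally invoked to resolve; the paper's whole point is to route around such estimates via the zero-crossing comparison.
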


Tightness of the maximum of the BRWRE has recently been established in the
annealed sense in \cite{Kri24}. We address the (stronger) notion of quenched
tightness, which previously had been proved only along subsequences, see
\cite{Kri21}.

An important feature of Theorem \ref{thm:tightness} is that no assumption is
made on the distribution of the random environment, besides
\eqref{eq:ellipticity}. In particular, in contrast to \cite{Kri24, CDO25}
(but not to \cite{Kri21}), we do not require that the corresponding Lyapunov
exponent is strictly concave at the asymptotic speed of the maximum, see
Remark~\ref{rk:v_0_vs_v_c} below for a precise statement of this assumption.
For now, note that under this assumption it is possible to define a certain
tilted measure under which a single particle moves with the speed of the
maximum. This measure, first introduced in \cite{CD20}, is at the core of
many results related to the BRWRE, and is likewise central to our arguments.

In the course of proving annealed tightness, Kriechbaum \cite{Kri24} derives
formulas for the centering of the maximum and bounds on the decay of the
probabilities that the maximum deviates significantly from its median. Our
approach does not directly yield estimates of this type.

Finally, we note that it can easily be verified that our observations on the
role of the strict concavity of the Lyapunov exponent in the proof of
Theorem~\ref{thm:tightness} (see Sections~\ref{sec:F_KPP}, \ref{sec:tilting}
  and \ref{sec:proof_prop_and_lemma}) also extend to the continuous-space
setting of \cite{CDO25}. Consequently, Theorem 2.1 in
  \cite{CDO25}, establishing tightness of the maximum of the BBMRE, remains
  valid even when Assumption 3 in \cite{CDO25}---the strict concavity at the
  asymptotic speed---is removed.

\section{Monotonicity of the number of zero-crossings}
\label{sec:model}

This section presents our second main result, describing the zero-crossing
behavior of solutions to \eqref{eq:main}, which plays a central role in the
proof of Theorem~\ref{thm:tightness}. In order to state this result, we
define the number of zero-crossings of a function~$f : \Z \rightarrow \R$  by
\begin{equation*}
  \Sigma(f) \coloneq 0 \vee \sup\{ n \geq 1: \exists \ x_1<  \dots < x_{n+1}
    \text{ s.t. } f(x_i)f(x_{i+1})<0 \text{ for } 1 \leq i \leq n \},
\end{equation*}
and recall that the discrete Laplace operator $\Delta_d$ is
defined, for $f: \Z \rightarrow \R$, by
\begin{equation*}
    \Delta_d f(x)=f(x+1)-2f(x)+f(x-1), \quad  x \in \Z.
\end{equation*}

\begin{theorem}
  \label{Thm:zero-crossings}
    For $\kappa \in L^\infty([0,\infty) \times \Z)$ and $u_0 \in \ell^1(\Z)$,
    let $u: [0,\infty) \times \Z \rightarrow \R$ be a solution of
    \begin{equation}
      \label{eq:main_2}
      \begin{aligned}
        \frac{\partial}{\partial t} u(t,y) &
        = \frac{1}{2} \Delta_d u(t,y) - \kappa(t,y)u(t,y), \quad
        &&t > 0, \ y \in \Z,
        \\ u(0,y)&=u_0(y),
        &&y \in \Z.
      \end{aligned}
    \end{equation}
    Then, for all  $0 \leq s \leq t$,
    \begin{equation}
      \label{eq:u_t_vs_u_s}
      \Sigma(u(t,\cdot)) \leq \Sigma(u(s,\cdot)).
    \end{equation}
    Moreover, in the special case  $\Sigma(u_0)= 1$,
    if $u_0$ satisfies
    \begin{equation} \label{eq:cond_u_0}
      \{y : u_0(y)<0\} = \{y \in \Z: y \leq a_0\}
      \text{ and }
      \{y : u_0(y)>0\} = \{y \in \Z: y \geq b_0\}
    \end{equation}
    for some $a_0<b_0 \in \Z$, then, for all $t \geq 0$, the same property is
    satisfied by $u(t,\cdot)$ for some
    $a_t<b_t \in \Z \cup \{-\infty, \infty\}$.
\end{theorem}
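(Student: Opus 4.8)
The plan is to follow the probabilistic strategy of \cite{EW99}, adapting it to the discrete-space setting. The key idea is to give the solution $u(t,\cdot)$ a probabilistic representation via a Feynman--Kac formula. Since $\kappa$ is bounded, by Duhamel's principle we can write
\begin{equation*}
  u(t,y) = \mathtt{E}_y\Bigl[ u_0(Y_t) \exp\Bigl(-\int_0^t \kappa(t-s,Y_s)\,\D s\Bigr)\Bigr],
\end{equation*}
where $(Y_s)_{s\ge 0}$ is a continuous-time simple random walk on $\Z$ with jump rate $1$ started at $y$, and $\mathtt{E}_y$ is the corresponding expectation. (One should first justify existence and uniqueness of the solution in the relevant class, using $u_0\in\ell^1(\Z)$ and $\kappa\in L^\infty$ to control the sums; the Feynman--Kac weight is bounded between $e^{-\|\kappa\|_\infty t}$ and $e^{\|\kappa\|_\infty t}$, so all series converge absolutely.) The crucial structural feature is that the weight is strictly positive: the sign of $u(t,y)$ is ``morally'' an average of the signs of $u_0$, which should not allow the creation of new sign changes.

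First I would reduce the monotonicity statement \eqref{eq:u_t_vs_u_s} to showing that, for every fixed $t>0$, $\Sigma(u(t,\cdot)) \le \Sigma(u_0)$; the general statement $s\le t$ then follows by applying this with initial time $s$, using that $v(r,\cdot) \defeq u(s+r,\cdot)$ solves the same type of equation with shifted potential $\kappa(s+\cdot,\cdot)$ and initial datum $u(s,\cdot)\in\ell^1(\Z)$ (the latter membership follows from the Feynman--Kac bound). So the heart of the matter is a single-time comparison. Following \cite{EW99}, I would argue by contradiction: suppose $\Sigma(u(t,\cdot)) \ge n+1$, witnessed by points $x_1<\dots<x_{n+2}$ with $u(t,x_i)u(t,x_{i+1})<0$. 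I would then run $n+2$ coupled random walks $Y^{(1)},\dots,Y^{(n+2)}$ backward from these points and exploit that the walks can be coupled so that their order is preserved until they meet, and once two adjacent walks meet they can be made to coalesce. A careful bookkeeping argument---essentially that a continuous-time simple random walk path is a.s. a step function whose ``crossing structure'' with another independent walk is controlled---shows that the alternating-sign pattern at time $t$ forces at least $n+1$ sign changes already in $u_0$ along the (random) positions $Y^{(i)}_t$, whence $\Sigma(u_0)\ge n+1$. Concretely, I expect to use a discrete analog of the intermediate-value/crossing lemma: if two real-valued functions on $\Z$ have disjoint ``descriptions'' of where they are positive/negative, counting sign alternations is subadditive under the coupling.

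The second part of the theorem, concerning the special case $\Sigma(u_0)=1$ with the half-line sign structure \eqref{eq:cond_u_0}, I would prove by first using \eqref{eq:u_t_vs_u_s} to conclude $\Sigma(u(t,\cdot)) \le 1$ for all $t$, so $u(t,\cdot)$ has at most one sign change. It then remains to rule out the possibility that $u(t,\cdot)$ becomes one-signed in a way incompatible with \eqref{eq:cond_u_0}, i.e., to show the negative part stays a ``left half-line'' and the positive part a ``right half-line.'' For this I would use monotonicity in the initial datum: the Feynman--Kac representation is linear and the walk semigroup is positivity-preserving, so $u_0 \le \tilde u_0$ pointwise implies $u(t,\cdot)\le \tilde u(t,\cdot)$. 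Comparing $u_0$ against translates of itself (or against suitable monotone test data) shows $u(t,\cdot)$ inherits the monotone-sign-pattern property: there exist $a_t < b_t$ in $\Z\cup\{\pm\infty\}$ with $u(t,y)<0$ for $y\le a_t$ and $u(t,y)>0$ for $y\ge b_t$, the endpoints possibly escaping to $\pm\infty$ if one sign disappears. Here one should be slightly careful that $u(t,y)$ can vanish for $y$ strictly between $a_t$ and $b_t$, which is exactly why the conclusion is phrased with a gap rather than $a_t = b_t - 1$.

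I expect the main obstacle to be the combinatorial coupling argument in the first part: making precise, in continuous time, the claim that coalescing couplings of simple random walks on $\Z$ preserve and do not increase the number of sign alternations transported from time $t$ back to time $0$. The real-line argument of \cite{EW99} relies on path-continuity of Brownian motion, which fails for the jump process $Y$; the discrete analog has to handle the possibility that two walks ``jump past'' each other. The fix is that on $\Z$ two adjacent-in-order walks must pass through a common site before their order can swap, so one can define the coalescence time as the first such meeting and verify that after coalescence the two walks are identical, which is enough to control the sign pattern. Getting this ordering-and-coalescence bookkeeping exactly right---and checking it degrades gracefully when some walks have already escaped to regions where $u_0$ has a definite sign---is the technical crux.
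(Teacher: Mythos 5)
Your setup (Feynman--Kac representation for the killed walk, reduction to the case $s=0$ via the Markov property and $u(s,\cdot)\in\ell^1(\Z)$) matches the paper, but the core step of your first part has a genuine gap. You propose to run $n+2$ coupled walks \emph{backward} from points $x_1<\dots<x_{n+2}$ witnessing the sign changes of $u(t,\cdot)$ and to conclude that ``the alternating-sign pattern at time $t$ forces at least $n+1$ sign changes already in $u_0$ along the positions $Y^{(i)}_t$.'' This deduction is not available: $\sgn u(t,x_i)$ is the sign of an \emph{expectation} $\sum_x u_0(x)P_x(\tilde X_t=x_i)$, not the sign of $u_0$ evaluated along any realization of a coupled path, so no single realization of your order-preserving/coalescing coupling witnesses anything about the sign pattern of $u_0$. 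Turning a multi-walk coupling into a statement about signs of these averages is precisely the content of the theorem; the two known ways to do it are (i) the Karlin--McGregor determinant formula for non-coincidence probabilities combined with the variation-diminishing property of totally positive kernels (the route the paper mentions but deliberately avoids), and (ii) the route the paper actually takes, following \cite{EW99}: a \emph{forward} system of signed particles started at i.i.d.\ positions sampled from $|u_0|$ (not at the $x_i$), annihilating upon meeting, for which the sign sequence at time $t$ is a substring of the initial one (here nearest-neighbor jumps at a.s.\ distinct times replace path continuity), followed by a law-of-large-numbers limit identifying $\frac1n Y^n_t$ with $u(t,\cdot)$. Your sketch contains neither ingredient, and the ``crossing/bookkeeping lemma'' you invoke is exactly the missing argument.

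Your second part also does not go through as described. Comparing $u_0$ with translates of itself is unavailable because $\kappa(t,x)$ depends on $x$ (and $t$), so the equation is not translation invariant and translates of $u_0$ do not evolve into translates of $u(t,\cdot)$; moreover, monotonicity of the solution map in the initial datum does not by itself show that $\{u(t,\cdot)>0\}$ is an upper half-line. The paper instead first shows (via the substring property of the particle system) that the two signs cannot exchange their order, and then rules out configurations such as $u(t,x)=0<u(t,y)$ with $x>y$ by adding a small negative point mass $-\varepsilon\bbone_x$ to $u_0$ and deriving a contradiction from the first step applied to the perturbed solution. You would need to supply an argument of this kind to close the second part.
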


Note that the second part of Theorem~\ref{Thm:zero-crossings} is not an
immediate consequence of the first. Indeed, $\Sigma(u(t, \cdot)) \leq 1$ does
not preclude the existence of some $y \in \Z$ satisfying $u(t,y)=0$ and
$u(t,y \pm 1)>0$.

Although in our applications of Theorem~\ref{Thm:zero-crossings} the initial
condition $u_0$ always satisfies $\Sigma(u_0)=1$ (see Section
 ~\ref{sec:F_KPP}), we state the result in the more general setting of
potentially multiple zero-crossings, as it might be of independent interest.

Continuous-space analogs of Theorem~\ref{Thm:zero-crossings} have a long
history in the analysis literature. Results in this direction were first
established in the 19th century by Sturm \cite{Stu36}. His ideas were later
revived in the study of linear and nonlinear parabolic equations (see, for
  example, \cite{Ang88}, \cite{Ang91}, \cite{DGM14} and \cite{Nad15}); see
also \cite{Gal04} for a detailed discussion of the Sturmian principle and its
applications. In the context of differential equations arising from a
branching Brownian motion, a related result appeared already in the 1930s in the
study of the F-KPP equation by Kolmogorov, Petrovskii and Piskunov
\cite{KPP37}.

A version of Theorem~\ref{Thm:zero-crossings}---describing the evolution of
zero-crossings when operators related to certain time-homogeneous Markov
processes on the integers are considered---was established already in the
1950s (see \cite{KM57}). The methods employed by the authors highlight,
through the Karlin--McGregor determinant formula of coincidence probabilities
for multiple particle systems, a probabilistic interpretation of the minors
of the transition matrix of the process in question (see \cite{KM58}). This
formula was later extended to a larger family of time-inhomogeneous Markov
processes (see \cite{Kar88}). We will discuss in Section
\ref{sec:proof_zero_crossings} the connection between these results and the
proof of Theorem~\ref{Thm:zero-crossings}.

\section{Branching random walk and the randomized F-KPP equation}
\label{sec:F_KPP}

We now return to the setting of Section~\ref{sec:tightness_result} and prove
tightness of the maximum of the BRWRE. As anticipated, we prove Theorem
\ref{thm:tightness} by adapting to the discrete-space framework the ideas in
\cite{CDO25}.

The plan for the proof of Theorem~\ref{thm:tightness} is as follows. We begin
by recalling some results on the BRWRE and its connection to the F-KPP
equation. The probabilistic representation of solutions to the F-KPP equation
via the BRWRE underlies our proof of Theorem~\ref{thm:tightness}, and is
likewise central to the arguments in \cite{CDO25}. Next, we present two
auxiliary results---Lemmas~\ref{lemma:wave_time}
and~\ref{lemma:wave}---which, together with Theorem~\ref{Thm:zero-crossings},
constitute the key components of the proof of tightness. We conclude
Section~\ref{sec:F_KPP} by explaining how Theorem~\ref{thm:tightness} can be
deduced from these statements. In Section~\ref{sec:tilting}, we analyze a
particular change of measure that is essential for studying large deviations
of the maximum of the BRWRE. The arguments in this section deviate
considerably from their continuous-space counterparts in \cite{CDO25}.
Indeed, since the change of measure behaves quite differently in the discrete
and continuous settings, distinct techniques are required. Building on the
results of Section~\ref{sec:tilting}, Section~\ref{sec:proof_prop_and_lemma}
establishes Lemmas~\ref{lemma:wave_time} and~\ref{lemma:wave} by adapting the
arguments used in \cite{CDO25} for their continuous-space analogs
(Corollary~3.6 and Lemma~6.1 therein). Finally, the proof of the
zero-crossings result, Theorem~\ref{Thm:zero-crossings}, is presented in
Section~\ref{sec:proof_zero_crossings}. The steps ensuring that
Theorem~\ref{thm:tightness} holds even without strict concavity of the
Lyapunov exponent at the asymptotic speed are contained in
Claims~\ref{claim:wave}, \ref{claim:v_2} and Lemma~\ref{lemma:velcrit}.

\subsection{The randomized F-KPP equation}

In this section, we study the solution of the randomized F-KPP equation and
its relation to the BRWRE.

We begin by recalling that, given an initial condition
$w_0: \Z \rightarrow [0,1]$, the randomized F-KPP equation
\begin{equation}
  \label{eq:F-KPP}
  \begin{aligned}
    \partial_t w(t,x)
    &= \frac{1}{2} \Delta_d w(t,x) + \xi(x)w(t,x)(1-w(t,x)), \quad
    &&t > 0, \ x \in \Z,
    \\ w(0,x)&=w_0(x),  &&x \in \Z,
  \end{aligned}
\end{equation}
admits a unique non-negative solution.
Note that the F-KPP equation can be viewed as an instance of
\eqref{eq:main_2}, with $\kappa(t,x)=\xi(x)(w(t,x)-1)$. This observation will
later be used to apply Theorem~\ref{Thm:zero-crossings} to the difference
$w-w'$ between two solutions $w$, $w'$ to \eqref{eq:F-KPP} with different
initial conditions.

We now recall a few facts from \cite{CD20}. As there, we use $N(t,y)$ to
denote the number of particles in the BRWRE that are located at $y \in \Z$ at
time $t \geq 0$, and write
\begin{equation*}
  N^\geq(t,y)\coloneq \sum_{z \geq y} N(t,z)
\end{equation*}
for the number of particles located to the right of $y$ at time $t$.
The next proposition recalls the well-known connection
of the solution to \eqref{eq:F-KPP} to the
BRWRE. (In its statement we use the convention $0^0 = 1$.)

\begin{proposition}[Proposition 7.1 in \cite{CD20}]
  For each $w_0: \Z \rightarrow[0,1]$,
  \begin{equation}\label{eq:McKean}
    w(t,x)\coloneq 1- \mathtt{E}_x^\xi\bigg[
      \prod_{z\in \Z}(1-w_0(z))^{N(t,z)}\bigg]
  \end{equation}
  solves \eqref{eq:F-KPP}. In particular, for every $y\in \mathbb Z$,
  \begin{equation}\label{eq:w^y}
    w^y(t,x) \coloneq \mathtt{P}_x^\xi(M(t)\geq y)
  \end{equation}
  solves \eqref{eq:F-KPP} with the initial condition
  $w_0^y\coloneq\bbone_{\{y,y+1,\dots \}}$.
\end{proposition}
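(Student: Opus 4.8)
The plan is to verify \eqref{eq:McKean} directly by a first-event decomposition of the branching dynamics, and then to obtain \eqref{eq:w^y} as the special case $w_0 = w_0^y$. Throughout, set $v(t,x) \defeq \mathtt{E}_x^\xi\big[\prod_{z\in\Z}(1-w_0(z))^{N(t,z)}\big]$, so that the claimed solution is $w = 1-v$. Since branching occurs at rate at most $\es < \infty$, the total number of particles at any fixed time is $\mathtt{P}_x^\xi$-a.s.\ finite, so $N(t,\cdot)$ has finite support and the product is a genuine $[0,1]$-valued random variable; in particular $v(t,x)\in[0,1]$ and, since there is a single particle at $x$ at time $0$, $v(0,x) = 1-w_0(x)$. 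An elementary computation shows that $w = 1-v$ solves \eqref{eq:F-KPP} if and only if $v$ solves $\partial_t v = \tfrac12\Delta_d v - \xi(x)v(1-v)$ with $v(0,\cdot)=1-w_0$, so it suffices to establish the latter.

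To do so, I would condition on the first event in the evolution of the particle started at $x$. Its time $T$ is exponentially distributed with parameter $1+\xi(x)$; conditionally on $T$, the event is a jump (to $x+1$ or $x-1$, each with probability $\tfrac12$) with probability $(1+\xi(x))^{-1}$, and a binary branching at $x$ with probability $\xi(x)(1+\xi(x))^{-1}$. Using the Markov property after a jump, and the branching property after a branching event — the product $\prod_z(1-w_0(z))^{N(t,z)}$ factorizes over the two independent subtrees created at the branching — I arrive at the renewal equation
\begin{equation*}
  v(t,x) = e^{-(1+\xi(x))t}\,(1-w_0(x))
  + \int_0^t e^{-(1+\xi(x))(t-r)}
  \Big[\tfrac12 v(r,x+1) + \tfrac12 v(r,x-1) + \xi(x)\,v(r,x)^2\Big]\,\D r.
\end{equation*}
Multiplying by $e^{(1+\xi(x))t}$, the right-hand side becomes locally Lipschitz in $t$ (the integrand is bounded, as $0\le v\le 1$), hence $v(\cdot,x)$ is continuous; the integrand is then continuous in $r$, so $v(\cdot,x)\in C^1$, and differentiating the resulting identity and rearranging gives exactly $\partial_t v = \tfrac12\Delta_d v - \xi(x)v(1-v)$. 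Thus $w = 1-v$ solves \eqref{eq:F-KPP}.

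For the final assertion, take $w_0 = w_0^y = \bbone_{\{y,y+1,\dots\}}$. With the convention $0^0=1$,
\begin{equation*}
  \prod_{z\in\Z}(1-w_0^y(z))^{N(t,z)} = \prod_{z\ge y} 0^{N(t,z)}
  = \bbone\{N(t,z)=0 \text{ for all } z\ge y\} = \bbone\{M(t) < y\},
\end{equation*}
so $v(t,x) = \mathtt{P}_x^\xi(M(t)<y)$ and $w^y(t,x) = 1-v(t,x) = \mathtt{P}_x^\xi(M(t)\ge y)$ from \eqref{eq:w^y} is the solution of \eqref{eq:F-KPP} with initial condition $w_0^y$.

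I expect the only genuinely delicate step to be the derivation of the renewal equation: justifying the conditioning on the first event, the use of the strong Markov and branching properties of the BRWRE, and in particular the interchange of the expectation with the random (but a.s.\ finite) product $\prod_z(1-w_0(z))^{N(t,z)}$. Once the renewal equation is in hand, the upgrade to the differential equation and the identification of $w^y$ are routine.
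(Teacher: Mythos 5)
Your argument is correct, but note that the paper itself does not prove this proposition at all: it is imported verbatim as Proposition~7.1 of \cite{CD20}, so there is no internal proof to compare against. What you supply is the classical McKean-type derivation: reduce to $v=1-w$, condition on the first event (jump at rate $1$, binary branching at rate $\xi(x)$) of the single initial particle, use the Markov and branching properties to factorize $\prod_z(1-w_0(z))^{N(t,z)}$ over the subtrees, obtain the renewal equation, and differentiate after multiplying by $e^{(1+\xi(x))t}$; the algebra indeed returns $\partial_t v=\tfrac12\Delta_d v-\xi(x)v(1-v)$, which is equivalent to \eqref{eq:F-KPP} for $w=1-v$, and the identification of \eqref{eq:w^y} via $\prod_{z\ge y}0^{N(t,z)}=\bbone\{M(t)<y\}$ is exact (particles never die in this model, so $M(t)$ is always well defined). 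This is essentially the standard route by which such duality statements are established in the literature, so your proof is a legitimate stand-alone substitute for the citation.

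Two small points deserve explicit care if you write this out in full. First, to start your bootstrap you need $v(\cdot,x)$ to be at least measurable so that the renewal integral is defined; this is easiest to get directly, e.g.\ continuity of $t\mapsto v(t,x)$ follows from dominated convergence together with the fact that, almost surely, no jump or branching occurs at a fixed deterministic time and the total number of particles (dominated by a Yule process of rate $\es$) is finite. Second, the interchange of expectation with the first-event decomposition is justified precisely by this a.s.\ finiteness and the bound $0\le\prod_z(1-w_0(z))^{N(t,z)}\le1$, as you anticipate; no further integrability issue arises. Note also that the proposition only asserts that \eqref{eq:McKean} \emph{solves} \eqref{eq:F-KPP}, so you do not need uniqueness of solutions anywhere in your argument.
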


In what follows, we will make extensive use of \eqref{eq:w^y} to study both
the F-KPP equation and the maximum of the BRWRE.

From now on, let $X=(X_t)_{t \geq 0}$ denote a one-dimensional
continuous-time simple random walk with rate one, which is started at
$x \in \Z$ under the measure $P_x$.
The corresponding expectation is denoted with $E_x$.
By the Feynman--Kac formula, the expected
number of particles $\mathtt{E}_x^\xi[N(t,y)]$ can be expressed in terms of
the random walk $X$, namely
\begin{equation}
  \label{eq:Feynman_Kac}
  \mathtt{E}^{\xi}_x[N(t,y)]
  =E_{x}\Big[ \exp\Big\{\int_0^t \xi(X_s) ds\Big\}; X_t= y\Big]
\end{equation}
for each $x,y \in \Z$, $t \geq 0 $ and $\xi$.  Generalizations of
\eqref{eq:Feynman_Kac} can be found in Proposition~3.1 in \cite{CD20}.

Another key object related to our problem is the (quenched) Lyapunov exponent
$\lambda: \R \rightarrow \R$, given by
\begin{equation*}
  \lambda(v) \coloneq \lim_{t \rightarrow \infty}
  \frac{1}{t}\log \mathtt{E}_0^\xi[N(t,\floor{ vt })].
\end{equation*}
By Proposition A.3 in \cite{CD20}, $\lambda$ is well defined, non-random,
even and concave. Moreover, there exists a unique $v_0 \in (0,\infty)$ such that
\begin{equation*}
  \lambda(v_0)=0, \quad \PP\text{-a.s.}
\end{equation*}
Furthermore, there exists a unique
\begin{equation}\label{eq:def_v_c}
  v_c \in (0,\infty)
\end{equation}
such that $\lambda$ is linear on $[0,v_c]$ and strictly concave on the
interval $(v_c, \infty)$. The role of $v_0$ and $v_c$ in the proof of
Theorem~\ref{thm:tightness} will be clarified in the next section.

\begin{remark}
  \label{rk:v_0_vs_v_c}
  The arguments of \cite{CD20, Kri24} (and several other recent papers on
    BRWRE, BBMRE and the F-KPP equation) rely heavily on the assumption
  \begin{equation}
    \label{eqn:vo_vc}
    v_0 > v_c.
  \end{equation}
  This assumption enables the introduction of certain tilted measures (see
    Section~\ref{sec:tilting} below) which are indispensable for studying the
  behavior of the maximum of the BRWRE. As explained in the introduction,
  although we also use these tilted measures in this paper, our main result,
  Theorem~\ref{thm:tightness}, holds without this assumption and relies
  solely on the ellipticity condition \eqref{eq:ellipticity}.
\end{remark}

We conclude this section by presenting two important results which, together
with Theorem~\ref{Thm:zero-crossings}, lead to Theorem~\ref{thm:tightness}.
The first is a discrete-space analog of Corollary~3.6 in \cite{CDO25}.
Roughly speaking, it states that if the BRWRE has probability at least
$\varepsilon$ to reach $y$ by time $t$, then it also has probability at least
$1-\varepsilon$ to reach $y$ by time $t+u_\varepsilon$, with $u_\varepsilon$
being independent of $y$. The result is stated in terms of solutions to the
F-KPP equation by means of \eqref{eq:w^y}.

\begin{lemma}
  \label{lemma:wave_time}
  For every $\varepsilon \in (0,1/2)$, there exists
  $u = u(\varepsilon) \in (0,\infty)$ such that, $\PP$-a.s, for
  all $t\geq0$ and $y \in \Z$ it holds that
  \begin{equation}
    \label{eq:lemma_wave_time}
    w^y(t,0)\geq \varepsilon \text{ implies }
    w^y(t+t',0) \geq 1- \varepsilon \text{ for all } t' \geq u.
  \end{equation}
\end{lemma}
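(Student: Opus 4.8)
The plan is to compare the solution $w^y$ of the F-KPP equation with a suitable \emph{traveling-wave-like} subsolution obtained from a single-site initial condition, and to exploit the monotonicity of zero-crossings from Theorem~\ref{Thm:zero-crossings} to upgrade a pointwise lower bound at one space-time point into the uniform estimate \eqref{eq:lemma_wave_time}. The key structural observation is that the hypothesis $w^y(t,0)\ge\varepsilon$ already contains a lot of information because, by \eqref{eq:w^y}, $w^y$ is exactly the probability that the BRWRE reaches $y$, and its spatial profile is monotone: $w^y(t,\cdot)$ is non-increasing in the space variable (since $\{M(t)\ge y\}$ is decreasing in $y$, equivalently the initial datum $\bbone_{\{y,y+1,\dots\}}$ is non-increasing and this is preserved by \eqref{eq:F-KPP}). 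Hence $w^y(t,0)\ge\varepsilon$ forces $w^y(t,x)\ge\varepsilon$ for all $x\le 0$, so a macroscopic chunk of space already carries mass at least $\varepsilon$ at time $t$.

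First I would fix $\varepsilon\in(0,1/2)$ and construct a reference solution: let $\psi$ solve \eqref{eq:F-KPP} with initial condition $\varepsilon\,\bbone_{\{x\le 0\}}$ (or, more conveniently, $\varepsilon\,\bbone_{\{0\}}$, which is dominated by any of the profiles we care about). Since \eqref{eq:F-KPP} satisfies a comparison principle (its nonlinearity $\xi(x)w(1-w)$ is Lipschitz and the equation is order-preserving for data in $[0,1]$), the bound $w^y(t,\cdot)\ge \varepsilon\,\bbone_{\{\cdot\le 0\}}\ge \varepsilon\,\bbone_{\{\cdot=0\}}$ together with the semigroup property gives $w^y(t+t',0)\ge \psi(t',0)$ for every $t'\ge 0$, uniformly in $y$ and in the base time $t$. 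Thus it remains to produce, independently of the environment, a deterministic time $u=u(\varepsilon)$ with $\psi(t',0)\ge 1-\varepsilon$ for all $t'\ge u$. This is where the ellipticity assumption \eqref{eq:ellipticity} enters: because $\xi\ge\ei>0$, the solution $\psi$ dominates the solution $\tilde\psi$ of the \emph{homogeneous} F-KPP equation with reaction rate $\ei$ and the same initial datum, and for that equation convergence of $\tilde\psi(t',0)$ to $1$ is a standard fact — the constant function $1$ is the unique stable equilibrium, the only other equilibrium $0$ is linearly unstable, and the solution started from any non-trivial compactly supported non-negative datum converges to $1$ locally uniformly (this follows e.g. from the spreading results for the classical F-KPP equation on $\Z$, or more elementarily from a sub/supersolution argument using that $\tilde\psi$ is increasing in $t'$ once it has left $0$). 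Choosing $u$ so that $\tilde\psi(u,0)\ge 1-\varepsilon$ and noting $\psi$ is non-decreasing in $t'$ at the spatial center completes the argument.

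The main obstacle, and the place where I expect the real work to be, is making the last step \emph{uniform and environment-free} in the genuinely inhomogeneous setting — i.e. controlling $\psi$ from below by a quantity depending only on $\ei$ and $\varepsilon$ but not on the full realization $\xi$ or on the offset $y$. The domination $\psi\ge\tilde\psi$ handles the environment, but one still needs the rate of convergence $\tilde\psi(t',0)\to 1$ to be quantitative; here one can invoke the probabilistic representation \eqref{eq:McKean}–\eqref{eq:Feynman_Kac}, which shows $1-\tilde\psi(t',0)=\mathtt{E}_0^{\ei}\big[(1-\varepsilon\bbone_{\{0\}})^{N(t',\cdot)}\big]$ decays because the homogeneous binary branching process with rate $\ei$ has a number of particles at the origin growing without bound, so the product tends to $0$. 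Alternatively — and this is the route actually used in \cite{CDO25} for the continuous-space analog — one avoids any explicit rate by arguing by contradiction using compactness: if the claim failed, one would extract a sequence $(t_n,y_n,\xi_n)$ violating it, and the translation-covariance of the model together with the fact that $w^{y_n}(t_n+\cdot,\cdot)$ solves a F-KPP equation with coefficients in the fixed bounded set $[\ei,\es]$ would, after passing to a subsequential limit, produce an entire solution bounded away from $1$ at the center despite having a spatial chunk of mass $\ge\varepsilon$ at the initial time, contradicting the strong maximum principle / the fact that $1$ is a global attractor; this is exactly the structure of the proof of Corollary~3.6 in \cite{CDO25}, which we are adapting. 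I would present the comparison-and-contradiction version, since it is the cleanest and is the one whose continuous analog is being cited.
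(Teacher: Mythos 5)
Your first route is correct and, once unwound, is essentially the paper's proof. The paper argues directly with the branching Markov property: by Lemma~6.8 of \cite{CD20} (domination of $N(t',0)$ from below by the homogeneous rate-$\ei$ process) one finds a \emph{deterministic} $t_1(\varepsilon)$ with $\mathtt{P}_0^\xi(N(t',0)\le t')\le\varepsilon/2$ for $t'\ge t_1$, and, conditionally on $N(t',0)>t'$, each of those particles independently fails to produce a descendant at level $y$ within the further time $t$ with probability $1-w^y(t,0)\le 1-\varepsilon$, giving $w^y(t+t',0)\ge 1-(1-\varepsilon)^{t'}-\varepsilon/2$. Your comparison-principle version is the same computation in different clothing: by \eqref{eq:McKean} and the semigroup property, $w^y(t+t',0)\ge 1-\mathtt{E}_0^\xi\big[(1-\varepsilon)^{N(t',0)}\big]$, which is exactly your $\psi(t',0)$, and the domination by the rate-$\ei$ homogeneous process then yields a deterministic $u(\varepsilon)$. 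The two arguments rest on the identical mechanism and give the same conclusion.

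Two caveats. First, the spatial monotonicity you invoke at the outset is wrong as stated: the initial datum $\bbone_{\{y,y+1,\dots\}}$ is non-\emph{decreasing} in the space variable, and in a random environment it is not clear that \eqref{eq:F-KPP} preserves spatial monotonicity at all, since translation invariance fails. Fortunately you only ever use $w^y(t,\cdot)\ge\varepsilon\,\bbone_{\{0\}}$, which is just the hypothesis, so this is harmless but should be deleted. Second, the version you say you would actually present --- the compactness/contradiction argument --- is precisely the ``heavy PDE argument'' of \cite{CDO25} that this paper deliberately replaces by the simple probabilistic one, and as sketched it is circular: the assertion that $1$ is a global attractor \emph{uniformly} over environments in $[\ei,\es]^\Z$ and over profiles with mass at least $\varepsilon$ at the origin is the content of the lemma, so it cannot be invoked as the contradiction at the end without further work. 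Present your first, explicit route instead.
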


The second result constitutes the backbone of the proof of
Theorem~\ref{thm:tightness} and can be viewed as a discrete-space analog of
Lemma~6.1 in \cite{CDO25}. Informally, the result asserts that if the BRWRE
is started at $z$, then the probability that the process reaches
$z +vt$ by time $t$ is larger than the probability that it reaches
$z+vt+ \Delta_{u,v}$ by time $t+u$, provided that $\Delta_{u,v}>0$ is large
enough. The interesting feature of the result is that, for given $u,v>0$, the
same $\Delta_{u,v}$ applies to any (sufficiently large) time $t$ and starting
point $z \in [-vt,0]$.
Once more, the result is stated using the
representation in \eqref{eq:w^y}.

\begin{lemma}
  \label{lemma:wave}
  There exists $v_2>\es+2>0$ such that for each $u>0$ and each $v>v_2$ there
  exist $\Delta_0=\Delta_0(u,v) \in \N$ and a $\PP$-a.s.~finite random
  variable $\mathcal{T}=\mathcal{T}(u,v) \geq 0$ so that, $\PP$-a.s., for all
  $t \geq \mathcal{T}$, $\Delta\in \{\Delta_0 ,\Delta_0+1,\dots\}$ and
  $y \in \{0,\dots,\ceil{vt}\}$,
  \begin{equation*}
    w^y(t,\floor{y-vt} ) \geq w^{y+\Delta}(t+u,\floor{y-vt}).
  \end{equation*}
\end{lemma}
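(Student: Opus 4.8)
The plan is to exploit the probabilistic representation \eqref{eq:w^y}: both $w^y(\cdot,\cdot)$ and $w^{y+\Delta}(\cdot,\cdot)$ solve the F-KPP equation \eqref{eq:F-KPP}, and by the remark following \eqref{eq:F-KPP} their difference satisfies a linear equation of the form \eqref{eq:main_2} with $\kappa(t,x)=\xi(x)(w(t,x)-1)$, where $w$ is (say) the larger of the two solutions—more carefully, one uses the standard linearization that the difference $v:=w^y-w^{y+\Delta}$ of two F-KPP solutions solves $\partial_t v=\tfrac12\Delta_d v+\xi(x)(1-w^y(t,x)-w^{y+\Delta}(t,x))v$, which is again of the type covered by Theorem~\ref{Thm:zero-crossings} (with $-\kappa=\xi(1-w^y-w^{y+\Delta})$, a bounded function by \eqref{eq:ellipticity} and $0\le w\le1$). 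Thus Theorem~\ref{Thm:zero-crossings} applies to $v$: its number of zero-crossings does not increase in time, and moreover, because the initial data $w_0^y=\bbone_{\{y,y+1,\dots\}}$ and $w_0^{y+\Delta}=\bbone_{\{y+\Delta,\dots\}}$ give $v(0,\cdot)$ with $\Sigma(v(0,\cdot))=1$ of exactly the sign pattern in \eqref{eq:cond_u_0} (negative for $x<y$—indeed zero there—and positive for $x\ge y$... wait, one must set this up so that $v(0,x)\le0$ for $x\le$ something and $\ge0$ for $x\ge$ something; here $v(0,x)=\bbone_{\{x\ge y\}}-\bbone_{\{x\ge y+\Delta\}}=\bbone_{y\le x<y+\Delta}\ge0$ everywhere, so $\Sigma(v(0,\cdot))=0$ and $v$ stays $\ge0$ forever by the first part alone). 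This already gives $w^y(t+u,x)\ge w^{y+\Delta}(t+u,x)$ for all $t,u,x$—but that is the \emph{wrong} comparison; we need $w^y$ at time $t$ versus $w^{y+\Delta}$ at time $t+u$. So the zero-crossing input must be applied differently: one compares $w^y(t,\cdot)$ with $w^{y+\Delta}(t+u,\cdot)$ by noting that, by the Markov/monotonicity structure, $w^{y+\Delta}(t+u,\cdot)$ is itself the time-$t$ solution of F-KPP started from $w^{y+\Delta}(u,\cdot)$, and one wants to choose $\Delta_0$ so large that $w^{y+\Delta}(u,\cdot)$ lies pointwise below $w^y(0,\cdot)=\bbone_{\{y,\dots\}}$ on a suitable half-line—this is where the speed bound $v>v_2>\es+2$ enters.

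Concretely, the main mechanism is a front-speed bound. Using the Feynman–Kac formula \eqref{eq:Feynman_Kac} and a first-moment (Chebyshev/Markov) estimate on $N^\ge(t,\cdot)$, one shows that the BRWRE started at a point $z$ has, with $\PP$-probability one for all large $t$, essentially no particles beyond $z+v_\ast t$ for any fixed $v_\ast$ exceeding the ballistic speed; a crude bound gives that the relevant threshold is at most $\es+2$ (the $\es$ coming from the exponential growth rate $\mathtt E_x^\xi[N(t,\cdot)]\le e^{\es t}$ and the $+2$ from the random-walk large-deviation rate for rate-one continuous-time SRW). Therefore, for $v>v_2:=\es+2+\text{(slack)}$, the quantity $w^{y+\Delta}(u,x)=\mathtt P_x^\xi(M(u)\ge y+\Delta)$ is extremely small—uniformly in $y$ and in $x$ in the relevant range—once $\Delta\ge\Delta_0(u,v)$, because reaching $y+\Delta$ from $x$ in time $u$ requires displacement $\ge\Delta$, which is super-ballistic. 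More precisely, set things up so that the starting site is $\floor{y-vt}$; then from $\floor{y-vt}$ the walk must travel $\approx vt+\Delta$ to the right in time $t+u$, and since $v>\es+2$ the exponential cost of such a deviation beats the branching gain $e^{\es(t+u)}$, yielding $w^{y+\Delta}(t+u,\floor{y-vt})\le e^{-ct}$ for a deterministic $c>0$ and all large $t$, with the $\PP$-a.s.\ finite time $\mathcal T$ absorbing the fluctuations of $\int_0^\cdot\xi(X_s)\,ds$ along the (finitely many relevant) trajectories via a Borel–Cantelli argument over $t\in\N$ plus a short-time interpolation. On the other hand $w^y(t,\floor{y-vt})\ge0$ trivially, and in fact one wants a lower bound of order larger than $e^{-ct}$: here one invokes Lemma~\ref{lemma:wave_time} together with the fact that, $\floor{y-vt}$ being at distance $\lesssim vt$ to the left of $y$, after enough time the wave $w^y$ has swept past it so that $w^y(t,\floor{y-vt})$ is bounded below by a constant—this needs that $v$ exceeds the front speed $v_0$ of $w^y$, which is automatic since $v>v_2>\es+2>v_0$. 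Chaining these two estimates gives $w^y(t,\floor{y-vt})\ge\text{const}\gg e^{-ct}\ge w^{y+\Delta}(t+u,\floor{y-vt})$ for all $t\ge\mathcal T$.

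The reason Theorem~\ref{Thm:zero-crossings} is needed rather than the bare speed estimates is uniformity in $\Delta$: the speed bound gives the inequality for $\Delta=\Delta_0$, and one then promotes it to all $\Delta\ge\Delta_0$ via monotonicity in $\Delta$—but $w^{y+\Delta}$ is decreasing in $\Delta$ (larger target is harder to reach), so $w^{y+\Delta}(t+u,\cdot)\le w^{y+\Delta_0}(t+u,\cdot)$ pointwise, which is immediate and does not even require the zero-crossing result. Where Theorem~\ref{Thm:zero-crossings} genuinely enters is in the companion Lemma~\ref{lemma:wave_time} and in ensuring the comparison is robust across the moving spatial window $y\in\{0,\dots,\ceil{vt}\}$: one uses that for each fixed $t$ the difference $w^y(t,\cdot)-w^{y+\Delta}(t+u,\cdot)$, viewed as a function of the spatial variable, has at most one sign change (this is the $\Sigma\le1$ conclusion applied to the appropriate linear evolution started at a one-zero-crossing datum), so it suffices to verify positivity at the single evaluation point $\floor{y-vt}$ and control the sign to its right, rather than at every site. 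The main obstacle I expect is the quenched, locally-uniform control of the Feynman–Kac exponential $\exp\{\int_0^{t+u}\xi(X_s)\,ds\}$ simultaneously over the $\Theta(vt)$ possible values of $y$ (equivalently, of the starting point $\floor{y-vt}$): a naive union bound over $y$ costs a polynomial-in-$t$ factor, which is harmless against the exponential gain $e^{-ct}$, but making the random time $\mathcal T$ genuinely finite $\PP$-a.s.\ requires a careful Borel–Cantelli setup—summing over integer times $t$ and over the window, then interpolating—mirroring the corresponding step (Lemma~6.1) in \cite{CDO25} but now with the discrete-space Feynman–Kac representation \eqref{eq:Feynman_Kac} and the discrete large-deviation rate of continuous-time SRW in place of the Gaussian one.
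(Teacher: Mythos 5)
Your central mechanism does not work, and the error is in the direction of the front. You propose to chain a constant lower bound on $w^y(t,\floor{y-vt})$ against an exponentially small upper bound $e^{-ct}$ on $w^{y+\Delta}(t+u,\floor{y-vt})$. But the evaluation point $\floor{y-vt}$ lies at distance $\approx vt$ to the \emph{left} of $y$ with $v>v_2>\es+2>v_0$, i.e.\ strictly \emph{beyond} the front of $x\mapsto w^y(t,x)$, which at time $t$ sits near $y-v_0t$. The wave has swept past a point $y-vt$ only if $v<v_0$; since here $v>v_0$, the quantity $w^y(t,\floor{y-vt})=\mathtt{P}^\xi_{\floor{y-vt}}(M(t)\geq y)$ is itself exponentially small in $t$, with essentially the same exponential rate as $w^{y+\Delta}(t+u,\floor{y-vt})$ (both are controlled by the Lyapunov exponent $\lambda(v)<0$). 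The whole content of Lemma~\ref{lemma:wave} is a comparison of two exponentially small tail probabilities whose leading exponential orders coincide; a crude first-moment/Chernoff upper bound on one side can never be beaten by a (false) constant lower bound on the other, and your ``$\mathrm{const}\gg e^{-ct}$'' chain collapses. Your secondary remarks are also off target: Theorem~\ref{Thm:zero-crossings} plays no role in the proof of this lemma (it is used only later, in deducing Theorem~\ref{thm:tightness} from Lemmas~\ref{lemma:wave_time} and~\ref{lemma:wave}), and the reduction to a single $\Delta_0$ via monotonicity in $\Delta$, while correct, is the trivial part.

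The actual proof must work at the level of the \emph{ratio} of the two tail probabilities, and this is what the tilted measures of Section~\ref{sec:tilting} are for: one chooses $\eta=\overline{\eta}(v)$ via \eqref{eq:veltilting} so that under $P^{\zeta,\eta}$ the single-particle walk travels at speed $v$ and the relevant crossing events become typical. The paper then (i) restricts the Feynman--Kac expectation of $N^{\geq}$ to the good event $\mathcal{G}_K$ on which the path first crosses the line $s\mapsto y-v_1(t-s)$ only during the last $K$ time units, and shows via the hitting-time estimates of Lemma~\ref{lemma_3L<K} and the coupling of Corollary~\ref{cor:coupling} that this restriction loses only a constant factor, uniformly in $t$ and in $y\in\{0,\dots,\ceil{vt}\}$ (Lemma~\ref{lemma:good_event}); and (ii) uses first- and second-moment comparisons between $\mathtt{P}(N^{\geq}\geq1)$ and $\mathtt{E}[N^{\geq}]$ on that event, following Section~6 of \cite{CDO25}, to convert the gain of a factor $Z^{\zeta,\eta}_{y,y+\Delta}\leq e^{\eta\Delta/\mathrm{const}}$ in $\Delta$ into the stated inequality for $\Delta\geq\Delta_0(u,v)$. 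None of this appears in your proposal, so the key ideas are missing.
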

We postpone the proofs of Lemmas~\ref{lemma:wave_time} and \ref{lemma:wave}
to Section~\ref{sec:proof_prop_and_lemma}.

\subsection{Proof of Theorem~\ref{thm:tightness} assuming
  Lemmas~\ref{lemma:wave_time} and~\ref{lemma:wave}}
\label{sec:proof_tightness}

We now show Theorem~\ref{thm:tightness} by combining the two lemmas from the
previous section with Theorem~\ref{Thm:zero-crossings}. As explained in
Remark~\ref{rk:v_0_vs_v_c}, we prove Theorem~\ref{thm:tightness} under the
minimal assumption \eqref{eq:ellipticity} on the environment. Consequently,
the arguments presented in this section differ from those in Section 7 in
\cite{CDO25}. We clarify the nature of these differences as we develop the
argument.

Let $\varepsilon \in (0,1)$. As in \cite{CDO25} we introduce the quenched
$\varepsilon$-quantile of the distribution of $M(t)$,
\begin{equation}\label{eq:def_x_t}
  x_t\coloneq \sup\{y \in \Z: w^{y}(t,0) \geq \varepsilon\}
  = \sup \{y \in \Z: \mathtt{P}_0^\xi(M(t) \geq y) \geq \varepsilon \},
  \quad t \geq 0.
\end{equation}
If the environment satisfies \eqref{eq:ellipticity}
and $v_0>v_c$, then, by Theorem 2.1 in \cite{CD20}, the maximum of the BRWRE
obeys a functional central limit theorem with speed $v_0$. Consequently, one
deduces that $x_t /t \rightarrow v_0$ and, in particular,
\begin{equation}\label{eq:x_t}
  \liminf_{t \rightarrow \infty} x_t
  = \infty \quad\text{ and }  \quad
  \limsup_{t \rightarrow \infty} \frac{x_t}{t}< \infty,
  \quad \PP\text{-a.s.}
\end{equation}
It is a priori not clear whether \eqref{eq:x_t} remains valid when the
assumption $v_0>v_c$ is removed.
(Note also that we cannot directly use the law of large numbers for the maximum of the discrete-time BRWRE established in \cite{CP07}, as  the law of $M(t)$ has unbounded support for any $t \in (0,\infty)$.)
As preparation for the proof
of Theorem~\ref{thm:tightness}, the following two claims show that
\eqref{eq:ellipticity} directly implies \eqref{eq:x_t}.

\begin{claim}
  \label{claim:wave}
  Let $x,y \in \Z$ and $w$ be a solution to \eqref{eq:F-KPP}. If $w(0,y) > 0$ for some $y \in \Z$, then
  $\lim_{t \to \infty} w(t,x) = 1$, $\PP$-a.s.
  In particular, $\lim_{t \to \infty} w^y(t,x) = 1$,
  $\PP$-a.s. As consequence,
  \begin{equation}\label{eq:lim_x_t_claim}
    \liminf_{t \rightarrow \infty} x_t =\infty, \quad \PP\text{-a.s.}
  \end{equation}
\end{claim}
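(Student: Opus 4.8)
The plan is to exploit the probabilistic representation \eqref{eq:McKean} together with the comparison principle for the F-KPP equation, reducing everything to a statement about the long-time behaviour of the BRWRE. First I would observe that, by the maximum principle for \eqref{eq:F-KPP} (monotonicity in the initial condition), it suffices to treat the case $w_0 = \varepsilon' \bbone_{\{y\}}$ for a single site $y$ with $w_0(y) > 0$; indeed any solution with $w(0,y) > 0$ dominates such a solution. By \eqref{eq:McKean} this solution equals $1 - \mathtt{E}_x^\xi[(1-\varepsilon')^{N(t,y)}]$, so the claim $w(t,x) \to 1$ is equivalent to $N(t,y) \to \infty$ in $\mathtt{P}_x^\xi$-probability, in fact to $\mathtt{P}_x^\xi(N(t,y) = 0) \to 0$ once we know $N(t,y)$ does not converge to a finite limit.

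The core step is therefore to show that, $\PP$-a.s., for every $x, y \in \Z$ one has $\mathtt{E}_x^\xi[N(t,y)] \to \infty$ and moreover $\mathtt{P}_x^\xi(N(t,y) \geq 1) \to 1$ as $t \to \infty$. For the expectation, I would use the Feynman--Kac formula \eqref{eq:Feynman_Kac}: $\mathtt{E}_x^\xi[N(t,y)] = E_x[\exp\{\int_0^t \xi(X_s)\,ds\}; X_t = y]$. Restricting the random walk to stay in a large fixed box containing $x$ and $y$ up to time $t$, and using $\xi \geq \ei > 0$, this is bounded below by $c\, e^{\ei t / 2}$ (say) times the probability that a rate-one random walk confined to the box is at $y$ at time $t$, which decays only like $e^{-Ct}$ with $C$ the spectral gap of the confined walk — and on a sufficiently large box $C < \ei$, giving exponential growth. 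To upgrade the divergence of the mean to the survival statement $\mathtt{P}_x^\xi(N(t,y) \geq 1) \to 1$, I would use the many-to-one / second-moment method, or more simply a direct branching argument: the descendants of the initial particle, restricted to the interval $[x \wedge y, x \vee y]$ where the branching rate is at least $\ei$, dominate a supercritical Galton--Watson-type process, so survival has positive probability uniformly, and then a $0$--$1$ argument over disjoint time windows (using the independence of the evolution after fresh particles appear, and the ellipticity to guarantee particles keep visiting $y$) forces $\mathtt{P}_x^\xi(N(t,y) = 0) \to 0$.

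Once $w^y(t,x) = \mathtt{P}_x^\xi(M(t) \geq y) \to 1$ is established for each fixed $y$, the consequence \eqref{eq:lim_x_t_claim} is immediate: for any $K \in \N$, $w^K(t,0) \to 1 > \varepsilon$, so $x_t \geq K$ for all large $t$, whence $\liminf_{t\to\infty} x_t = \infty$ since $K$ is arbitrary. The main obstacle I anticipate is making the confinement estimate for \eqref{eq:Feynman_Kac} quantitative enough — i.e. choosing the box large enough that its principal Dirichlet eigenvalue is beaten by $\ei$ — and cleanly passing from the growth of the first moment to genuine survival without a second-moment bound that might require more than ellipticity; I expect the branching/renewal argument over time windows to be the robust route, precisely because it uses only $\ei > 0$ and not any finer structure of the environment, consistent with the paper's emphasis that no assumption beyond \eqref{eq:ellipticity} is needed.
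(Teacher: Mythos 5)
Your overall strategy---reduce via the McKean representation \eqref{eq:McKean} to showing that the number of particles at the site $y$ at time $t$ blows up---is exactly the paper's, and your derivation of \eqref{eq:lim_x_t_claim} from $w^K(t,0)\to 1$ is fine. But there is a genuine gap in the core step. What the conclusion requires is $N(t,y)\to\infty$ in $\mathtt{P}_x^\xi$-probability, i.e.\ $\mathtt{P}_x^\xi(N(t,y)\le K_t)\to 0$ along some $K_t\to\infty$, so that $\mathtt{E}_x^\xi[(1-w_0(y))^{N(t,y)}]\to 0$. Your argument is aimed at the strictly weaker statement $\mathtt{P}_x^\xi(N(t,y)\ge 1)\to 1$, and the parenthetical ``once we know $N(t,y)$ does not converge to a finite limit'' is precisely the unproved part: if, say, $N(t,y)$ were equal to $1$ with high probability, you would only get $w(t,x)\to w(0,y)<1$. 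Moreover, the ``survival'' framing is off target in this model: with binary branching and no deaths, survival is automatic, and the supercritical Galton--Watson comparison plus a $0$--$1$ argument over time windows would at best show that particles visit $y$ infinitely often, not that \emph{many} particles sit at $y$ at the fixed time $t$. The spectral-gap lower bound on the first moment via \eqref{eq:Feynman_Kac} is also not the relevant quantity, since first-moment growth alone says nothing about the lower tail of $N(t,y)$.

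The paper closes exactly this gap with a two-stage argument you should compare against. First, Lemma~6.8 in \cite{CD20} (comparison with the homogeneous environment $\xi\equiv\ei$) gives $\mathtt{P}_x^\xi(N(t/2,x)\le t^2)\to 0$, i.e.\ polynomially many particles at the starting site by time $t/2$. Second, conditionally on this, each of those $\ge t^2$ particles---ignoring its further branching---is independently at $y$ at time $t$ with probability $p_t=P_x(X_{t/2}=y)\gtrsim t^{-1/2}$, so $N(t,y)$ stochastically dominates a $\mathrm{Bin}(\lfloor t^2\rfloor,p_t)$ variable with mean $\ge t^{5/4}$, whence $\mathtt{P}_x^\xi(N(t,y)\le t)\to 0$ and $(1-w_0(y))^{t}\to 0$ finishes the proof. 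If you want to salvage your route, you need an analogous quantitative step producing a diverging number of particles \emph{at $y$ at time $t$}, not merely one.
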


\begin{proof}
  By Lemma 6.8 in \cite{CD20}, the number of particles
  in the BRWRE
  at the origin grows exponentially over time. More precisely, for all $x \in \Z$,
  \begin{equation}\label{eq:N(t,0)}
     \mathtt{P}_x^\xi(N(t/2,x) \leq t^2) \leq \mathtt{P}_x^\ei(N(t/2,x)\leq t^2)
    \xrightarrow[]{t \rightarrow \infty}0, \quad \PP\text{-a.s.}
  \end{equation}
  (Here $\mathtt{P}_x^\ei$ denotes $\mathtt{P}_x^\xi$ with $\xi \equiv\ei$.)
  For given $x$ and $y$, if the number of particles at $x$ at time $t/2$ is
  at least $t^2$, then the number of particles reaching $y$ at time $t$ is
  stochastically bounded from below by a binomial random variable
  $B_{t,x,y}\sim$~Bin~$(\floor{ t^2 },p_t)$ with parameters
  $\floor{ t^2}  $ and $p_t\coloneq P_x(X_{t/2}=y) \in
  (0,1)$. By \eqref{eq:N(t,0)}, and since $\floor{ t^2 }
  p_t>t^{5/4}$ for $t\geq0$ large enough,
  \begin{equation*}
    \mathtt{P}^{\xi}_x(N(t,y) \leq t) \leq
    \mathtt{P}^{\xi}_x(N(t/2,x) \leq t^2)
    + \mathtt{P}^{\xi}_x( B_{t,x,y} \leq t)
    \xrightarrow{t \rightarrow \infty} 0,
    \quad \PP\text{-a.s.}
  \end{equation*}
  By combining this observation with \eqref{eq:McKean}, we get
  \begin{equation*}
    w(t,x) \geq 1- \mathtt{E}_x^{\xi}\big[(1-w(0,y))^{N(t,y)}\big]
    \geq 1- \mathtt{P}_x^{\xi}(N(t,y) \leq t) - (1-w(0,y))^t
    \xrightarrow[t \rightarrow \infty]{\PP\text{-a.s.}} 1
  \end{equation*}
  whenever $w(0,y) > 0$ for some $y \in \Z$. This proves the first part of
  the claim.

  To prove \eqref{eq:lim_x_t_claim}, assume by contradiction that
  $\liminf_{t \rightarrow \infty}{x_t}< C-1$ for some $C=C(\xi)\in \N$. Then
  there exist $t_n \rightarrow \infty$ satisfying $x_{t_n} +1\leq C$, so that
  \begin{equation*}
    w^C(t_n,0) = \mathtt{P}^\xi_0(M(t_n) \geq C)
    \leq \mathtt{P}^\xi_0(M(t_n) \geq x_{t_n} +1)
    = w^{x_{t_n}+1}({t_n},0) < \varepsilon.
  \end{equation*}
  Hence, $\limsup_{n \rightarrow \infty} w^C(t_n,0) < 1$, which contradicts
  the first part of the claim.
\end{proof}

\begin{claim}
  \label{claim:v_2}
  Let $\es \in (0,\infty)$ be as in \eqref{eq:ellipticity}. Then
  $x_t \leq \ceil{(\es+2)t} $ for sufficiently large $t \geq 0$,
  $\PP$-a.s.
\end{claim}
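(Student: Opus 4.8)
The plan is to obtain the upper bound on $x_t$ from a first-moment estimate on the number of BRWRE particles lying far to the right. Recalling from \eqref{eq:def_x_t} that $x_t=\sup\{y\in\Z:w^y(t,0)\geq\varepsilon\}$ and that $y\mapsto w^y(t,0)=\mathtt{P}_0^\xi(M(t)\geq y)$ is non-increasing, it suffices to show that $w^y(t,0)<\varepsilon$ whenever $y\geq(\es+2)t$ and $t$ is large. By Markov's inequality and the identity $\{M(t)\geq y\}=\{N^{\geq}(t,y)\geq 1\}$,
\begin{equation*}
  w^y(t,0)\leq \mathtt{E}_0^\xi[N^{\geq}(t,y)]=\sum_{z\geq y}\mathtt{E}_0^\xi[N(t,z)],
\end{equation*}
and since $\PP$-a.s.\ $\xi(z)\leq\es$ for all $z\in\Z$, the Feynman--Kac formula \eqref{eq:Feynman_Kac} gives $\mathtt{E}_0^\xi[N(t,z)]\leq e^{\es t}P_0(X_t=z)$; summing over $z\geq y$ yields $w^y(t,0)\leq e^{\es t}P_0(X_t\geq y)$, $\PP$-a.s., for all $y\in\Z$ and $t\geq 0$.

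It then remains to bound $P_0(X_t\geq y)$ for $y\geq(\es+2)t$ by a Chernoff estimate. Since the generator of the rate-one walk $X$ is $\tfrac12\Delta_d$, one has $E_0[e^{X_t}]=e^{t(\cosh 1-1)}$, so that $P_0(X_t\geq y)\leq e^{-y}E_0[e^{X_t}]\leq e^{-(\es+2)t+(\cosh 1-1)t}$ for all $y\geq(\es+2)t$. Plugging this into the previous bound gives, for every such $y$,
\begin{equation*}
  w^y(t,0)\leq e^{\es t}\,e^{-(\es+2)t+(\cosh 1-1)t}=e^{(\cosh 1-3)t}.
\end{equation*}
Because $\cosh 1<3$, the right-hand side tends to $0$, so there is a \emph{deterministic} $t_0=t_0(\es,\varepsilon)$ with $w^y(t,0)<\varepsilon$ for all $t\geq t_0$ and all $y\geq(\es+2)t$; hence $x_t<(\es+2)t\leq\ceil{(\es+2)t}$ for all $t\geq t_0$, which is the assertion.

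I do not anticipate a real obstacle here: the estimate is deterministic and uniform in the environment, so the $\PP$-a.s.\ conclusion is automatic, and the only point to check is that the large-deviation rate of $X$ at speed $\es+2$ strictly dominates $\es$---which holds with room to spare (already speed $\es+1$ would work, as $\cosh 1<2$). Combined with Claim~\ref{claim:wave}, this reproves \eqref{eq:x_t}, and thereby everything needed from it in the proof of Theorem~\ref{thm:tightness}, without the functional central limit theorem of \cite{CD20} and in particular without assuming $v_0>v_c$.
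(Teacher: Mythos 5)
Your proposal is correct and follows essentially the same route as the paper: a first-moment bound $w^y(t,0)\leq \mathtt{E}_0^\xi[N^\geq(t,y)]\leq e^{\es t}P_0(X_t\geq y)$ via the Feynman--Kac formula \eqref{eq:Feynman_Kac}, followed by a Chernoff estimate on the rate-one walk at speed $\es+2$. The only cosmetic difference is that you compute the exponential moment explicitly (getting the rate $\cosh 1-1$), whereas the paper uses the slightly coarser bound $P_0(X_t\geq at)\leq e^{-(a-1)t}$; both yield the same deterministic, environment-uniform conclusion.
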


\begin{proof}[Proof]
  By a Chernoff bound on $X_t$, for all $a>1$ and $t \geq 0$ one has
  \begin{equation}\label{eq:Chernoff_X_t}
      P_0(X_t \geq at) \leq e^{-(a-1)t}.
  \end{equation}
  Moreover, for $y \in \Z$, $M(t) \geq y$ if
  and only if $N^\geq(t,y) \geq 1$, so that, by \eqref{eq:Feynman_Kac},
  \begin{equation}\label{eq:bound_N^geq}
    \mathtt{P}_{x}^\xi(M(t)\geq y) \leq \mathtt{E}_{x}^\xi[N^{\geq}(t,y)]
    = E_{x}\Big[ e^{\int_0^t \xi(X_s)ds}; X_t \geq y\Big]\leq e^{\es \cdot
      t}P_{x}(X_t \geq y)
  \end{equation}
  for all $x \in \Z$. In particular,
  \begin{equation}
    \label{eq:max_bound}
    \mathtt{P}^{\xi}_0\big(M(t) \geq \ceil{(\es+2)t}\big)
    \leq e^{\es\cdot t} P_0\big(X_t \geq \ceil{(\es+2)t}\big),
  \end{equation}
  which, by \eqref{eq:Chernoff_X_t}, is bounded by $e^{-t/2}$ for $t\geq 0$ large enough. Since, by assumption, $x_t$ satisfies
  $\mathtt{P}^\xi_0(M(t) \geq x_t) \geq \varepsilon >0$,
  the desired result follows.
\end{proof}

Before turning to the proof of Theorem~\ref{thm:tightness}, we show a
technical result addressing the assumptions required in the second part of
Theorem~\ref{Thm:zero-crossings}.

\begin{claim}
  \label{claim:u_0}
  For given $u\in(0,\infty)$ and $y,z \in \Z$, the function $u_0:\Z \rightarrow \R$ given by
  \begin{equation*}
    x \mapsto u_0(x)\coloneq\bbone_{x \geq z}
    - \mathtt{P}^\xi_x(M(u) \geq y)
  \end{equation*}
  satisfies \eqref{eq:cond_u_0} and $u_0\in \ell^1(\Z)$.
\end{claim}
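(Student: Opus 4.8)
The plan is to reduce both assertions about $u_0$ to the uniform two-sided bound
\[
  0 < w^y(u,x) < 1 \qquad\text{for every } x\in\Z,
\]
where $w^y(u,x)=\mathtt P_x^\xi(M(u)\ge y)$ as in \eqref{eq:w^y}, supplemented by a crude estimate on how fast $w^y(u,x)$ tends to $0$ as $x\to-\infty$ and to $1$ as $x\to+\infty$. Throughout I fix a realization of the environment with $\ei\le\xi(\cdot)\le\es$, which occurs $\PP$-a.s.

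To get the lower bound I would follow a single lineage of the BRWRE started at $x$: the initial particle, keeping at each branching event one of its two offspring according to a fixed rule. The position of this lineage evolves as a rate-one continuous-time simple random walk, so with probability $P_x(X_u\ge y)>0$ it lies at or to the right of $y$ at time $u$, which forces $M(u)\ge y$; hence $w^y(u,x)\ge P_x(X_u\ge y)>0$. For the upper bound I would instead use the event that no branching occurs on $[0,u]$: conditionally on the trajectory of the (then unique) particle this event has probability $\exp\left(-\int_0^u\xi(X_s)\,ds\right)\ge e^{-\es u}$, and intersecting it with $\{X_u\le y-1\}$ yields $\mathtt P_x^\xi(M(u)<y)\ge e^{-\es u}\,P_x(X_u\le y-1)>0$, the finiteness of $\es$ being guaranteed by \eqref{eq:ellipticity}; thus $w^y(u,x)<1$.

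Granting the two-sided bound, the sign structure of $u_0$ is immediate: $u_0(x)=-w^y(u,x)<0$ for $x<z$ and $u_0(x)=1-w^y(u,x)>0$ for $x\ge z$, so $\{u_0<0\}=\{x\le z-1\}$, $\{u_0>0\}=\{x\ge z\}$, $\{u_0=0\}=\emptyset$, and \eqref{eq:cond_u_0} holds with $a_0=z-1<z=b_0$ (in particular $\Sigma(u_0)=1$). For summability I would write
\[
  \sum_{x\in\Z}\abs{u_0(x)}
  =\sum_{x<z}\mathtt P_x^\xi(M(u)\ge y)+\sum_{x\ge z}\mathtt P_x^\xi(M(u)<y),
\]
bound the first sum by $e^{\es u}\sum_{x<z}P_x(X_u\ge y)$ as in \eqref{eq:bound_N^geq}, and the second by $\sum_{x\ge z}P_x(X_u\le y-1)$ using the inclusion $\{M(u)<y\}\subseteq\{\text{the followed lineage is }<y\text{ at time }u\}$ noted above. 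By translation invariance each of these is controlled by a summable tail, namely $\sum_{j\ge1}P_0(X_u\ge j)=E_0[X_u^+]$ (respectively the analogous sum for $X_u^-$) up to finitely many terms bounded by $1$, and $E_0[X_u^+]\le E_0[\abs{X_u}]\le u$ because the number of jumps of $X$ on $[0,u]$ has mean $u$; hence $u_0\in\ell^1(\Z)$.

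The argument is elementary, and the only delicate point is that the separation must be strict, $w^y(u,x)<1$ rather than merely $\le1$, so as to preclude a zero of $u_0$ inside $\{x\ge z\}$; this strictness is precisely what the no-branching lower bound on $\mathtt P_x^\xi(M(u)<y)$ supplies, and is the sole place where the finiteness of $\es$ in \eqref{eq:ellipticity} enters.
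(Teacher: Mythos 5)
Your proof is correct and follows essentially the same route as the paper: establish $\mathtt{P}_x^\xi(M(u)\ge y)\in(0,1)$ to get \eqref{eq:cond_u_0}, then split $\sum_x|u_0(x)|$ into the two one-sided tails and control them by the first-moment bound \eqref{eq:bound_N^geq} together with $E_0[|X_u|]<\infty$. The only (harmless) differences are that you spell out the strict bounds $0<w^y(u,x)<1$, which the paper asserts without proof, and that you handle the sum over $x\ge z$ via a single-lineage comparison where the paper appeals to symmetry.
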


\begin{proof}[Proof]
  Since $\mathtt{P}^\xi_x(M(u) \geq y) \in (0,1)$ whenever
  $u \in (0,\infty)$, \eqref{eq:cond_u_0} is automatically satisfied. It
  therefore suffices to prove that $u_0\in \ell^1(\mathbb Z)$. Without loss
  of generality, it is enough to consider the case $z=0$. Hence, we have to
  show that
  \begin{align*}
    x \mapsto 1-\mathtt{P}_x^\xi(M(u)\geq y)
    =  \mathtt{P}_x^\xi(M(u)< y) \in \ell^1(\N), \quad
    x \mapsto \mathtt{P}_{x}^\xi(M(u)\geq y) \in \ell^1(-\N).
  \end{align*}
  By symmetry, it suffices to consider the second mapping.
  Moreover, by \eqref{eq:bound_N^geq},
  \begin{equation*}
    \mathtt{P}_{x}^\xi(M(u)\geq y) \leq e^{\es \cdot
      u}P_{x}(X_u \geq y)=e^{\es \cdot u}P_{0}(X_u - \ y \geq -x).
  \end{equation*}
  Since $\sum_{x \leq 1} P_0(|X_u-y| \geq -x) = E_0[|X_u-y|]< \infty$, we
  deduce that $x \mapsto \mathtt{E}_{x}^\xi[N^{\geq}(u,y)] \in \ell^1(- \N)$.
  Therefore, $x \mapsto \mathtt{P}_{x}^\xi(M(u) \geq y) \in  \ell^1(- \N)$,
  which concludes the proof of the claim.
\end{proof}

\begin{proof}[Proof of Theorem~\ref{thm:tightness}]
  It is enough to prove tightness of $(M(t)-m(t))_{t \geq \mathcal{T}_0}$ for
  some $\PP$-a.s.~finite $\mathcal{T}_0\geq0$ (which will be
      chosen later). Indeed, by \eqref{eq:bound_N^geq}, for any
  $\mathcal{T}_0\geq0$ there exists $C=C(\mathcal{T}_0) \in \N$ such that,
  $\PP$-a.s.,
  \begin{equation*}
    \inf_{t \leq \mathcal{T}_0} \mathtt{P}_0^\xi(M(t) \geq -C)
    - \mathtt{P}_0^\xi(M(t) \geq C)
    \geq \inf_{t \leq \mathcal{T}_0} P_0(X_{t}\geq -C) -e^{\es\cdot t}P_0(X_{t} \geq C)
    \geq 1-\varepsilon,
  \end{equation*}
  which implies tightness of  the family $(M(t)-m(t))_{t \leq \mathcal{T}_0}$.

  For a given $\varepsilon \in (0,1/2)$, let $x_t$ be the quenched quantile
  defined in \eqref{eq:def_x_t}. The desired tightness follows if we can find
  $\Delta=\Delta(\varepsilon)\in \N$ such that, $\PP$-a.s.,
  \begin{equation}
    \label{eq1:proof_tight}
    w^{x_t-\Delta}(t,0)= \mathtt{P}^\xi_0(M(t) \geq x_t - \Delta)
    \geq 1-\varepsilon, \quad \text{for } t \geq \mathcal{T}_0.
  \end{equation}
  Indeed, $x_t- \Delta \leq m(t) \leq x_t$ by \eqref{eq1:proof_tight} and definition of $m(t)$,
  so that
  \begin{equation*}
    \mathtt{P}^\xi_0(|M(t)-m(t)| > \Delta)
    \leq \mathtt{P}^\xi_0(M(t) > (x_t- \Delta) +  \Delta)
    + \mathtt{P}^\xi_0(M(t) < x_t- \Delta) < 2\varepsilon,
  \end{equation*}
  by definition of $x_t$ and \eqref{eq1:proof_tight}.

  We now prove \eqref{eq1:proof_tight}.
  Since $w^{x_t}(t,0) \geq \varepsilon$, by applying Lemma~\ref{lemma:wave_time} we
  obtain $u=u(\varepsilon) \in (0,\infty)$ such that, $\PP$-a.s.,
  \begin{equation*}
    w^{x_t}(t+t',0) \geq 1 - \varepsilon
  \end{equation*}
   for all  $t' \geq u$ and $t\geq0$.
   Hence, \eqref{eq1:proof_tight} follows
  if for any $u\in (0,\infty)$ we can find some
  $\Delta= \Delta(u) \in \N$ such that, $\PP$-a.s.,
  \begin{equation}\label{eq:w(t+u,0)}
    w^{x_t- \Delta}(t,0)
    \geq w^{x_t}(t+u,0), \quad \text{for } t \geq \mathcal{T}_0.
  \end{equation}
  To prove \eqref{eq:w(t+u,0)}, we employ Theorem~\ref{Thm:zero-crossings}
  and Lemma~\ref{lemma:wave}, and study the zeros~of
  \begin{equation*}
    W(s,x)\coloneq w^{x_t- \Delta}(s,x) -w^{x_t}(s+u,x), \quad s
    \geq 0, \ x \in \Z.
  \end{equation*}
  Since $w^{x_t-\Delta}$ and $w^{x_t}$ solve the F-KPP differential equation
  \eqref{eq:F-KPP}, it is straightforward to verify that $W$ solves a
  differential equation of the form \eqref{eq:main_2}, with
  \begin{equation*}
    \kappa(s,x)\coloneq\xi(x)\big(1-w^{x_t-\Delta}(s,x)-w^{x_t}(s+u,x)\big)
    \in [-\es, \es],
  \end{equation*}
  and the initial condition
  \begin{equation*}
    W(0,x)\coloneq\bbone_{x \geq x_t-\Delta}- \mathtt{P}^\xi_x(M(u) \geq x_t).
  \end{equation*}
  By Claim~\ref{claim:u_0}, we deduce that $W(0,\cdot)$ satisfies
  the assumptions of Theorem~\ref{Thm:zero-crossings}. We now show that,  $\PP$-a.s., for all
  $t \geq \mathcal{T}_0$ there exists
  \begin{equation}\label{eq:x^*}
    x^*=x^*(t) \in -\N \quad \text{ such that } \quad W(t,x^*)\geq 0.
  \end{equation}
  By the second part of
  Theorem~\ref{Thm:zero-crossings}, finding such an $x^*$ implies, $\PP$-a.s.,
  \begin{equation*}
    W(t,0)=w^{x_t- \Delta}(t,0) -w^{x_t}(t+u,0)
    \geq 0, \quad  \text{for } t \geq \mathcal{T}_0,
  \end{equation*}
  and therefore \eqref{eq:w(t+u,0)}.

  It remains to show \eqref{eq:x^*}. To this end we apply
  Lemma~\ref{lemma:wave} to $y \approx x_t$. Let $v_2>\es+2$ as in the
  statement of Lemma~\ref{lemma:wave} and $v\coloneq v_2+1$. By
  \eqref{eq:lim_x_t_claim} and Claim~\ref{claim:v_2}, for all $\Delta \in \N$ there exists a $\PP$-a.s.~finite $\mathcal{T}_1=\mathcal{T}_1(\Delta,v)\geq 0$ such that
  $0 \leq x_t-\Delta < \ceil{vt}$  for all $t \geq \mathcal{T}_1$. Therefore, picking $\Delta=\Delta(u,v) \in \N$ and $\mathcal{T}= \mathcal{T}(u,v) \geq 0$ as in the statement
  of Lemma~\ref{lemma:wave}, gives, $\PP$-a.s.,
  \begin{equation*}
    w^{x_t-\Delta}(t,x^*) \geq w^{x_t}(t+u,x^*),
  \end{equation*}
  for $t \geq \mathcal{T}_0\coloneq\mathcal{T}\vee \mathcal{T}_1$ and $x^*\coloneq\floor{x_t-\Delta-vt}<0$.
  This establishes \eqref{eq:x^*} and completes the proof of the theorem.
\end{proof}

\section{Tilting and exponential change of measure}
\label{sec:tilting}

This section serves as a preparation for the proof of Lemma~\ref{lemma:wave}.
We introduce the previously mentioned family of tilted measures associated to
the BRWRE, and derive some of their properties.

We define $\zeta\coloneq\xi-\es$ and $\triangle\coloneq\es-\ei$. Note that
$\zeta(x)\in [-\triangle,0]$ for all $x \in \Z$. We recall that $X$ stands
for the simple random walk on $\mathbb Z$. We write
$H_y\coloneq\inf \{ t \geq 0: X_t = y\}$ for the hitting time of
$y\in \mathbb Z$. For $\eta \le 0$ and $x,y \in \Z$ with $y \geq x$, we
define a probability measure on the stopped $\sigma$-algebra
$\sigma( (X_{t \wedge H_y})_{t \geq 0})$ via
\begin{equation*}
  P_{x,y}^{\zeta, \eta}(A)
  \coloneq \frac{1}{Z_{x,y}^{\zeta,\eta}}
  E_x\bigg[ \exp \Big( \int_0^{H_y} (\zeta(X_s)+\eta)  ds \Big) ; A \bigg],
\end{equation*}
where the normalizing constant is given by
\begin{equation}
  \label{eqn:Z}
  Z_{x,y}^{\zeta,\eta}:
  = E_x\bigg[ \exp \Big( \int_0^{H_y} (\zeta(X_s)+\eta)  ds \Big) \bigg].
\end{equation}
By the Markov property of $X$ under $P_x$,
\begin{equation}\label{eq:Zxy}
  Z_{x,z}^{\zeta,\eta}=Z_{x,y}^{\zeta,\eta}Z_{y,z}^{\zeta,\eta},
  \quad x \leq y \leq z,
\end{equation}
so that, for given $x \in \Z$, the measures $(P_x^{\zeta,\eta})_{y \geq x }$ are consistent. In particular, by the Kolmogorov extension theorem,
these measure extend to a probability measure $P_x^{\zeta,\eta}$ on $\sigma((X_t)_{t \geq 0})$.

The following result provides an explicit interpretation of the process $X$
under the measure $P_x^{\zeta,\eta}$, as a continuous-time random walk with inhomogeneous
transition probabilities and jump rates. To state the result, we first
extend the definition of $Z_{x,y}^{\zeta,\eta}$ to any $x,y \in \Z$
by setting $Z_{x,y}^{\zeta,\eta} = 1/Z_{y,x}^{\zeta,\eta}$ whenever $y <x$.
A direct computation shows that, with this definition, \eqref{eq:Zxy} extends to all
$x,y,z \in \Z$.

\begin{proposition}
  \label{prop:tilted_RW}
  Given $x \in \Z $, $\eta \leq 0$ and $ \zeta: \Z \mapsto [- \triangle, 0]$,
  let
  \begin{equation}\label{eq:lambda}
    \lambda^{\zeta,\eta}(y)\coloneq1-\zeta(y)-\eta \in \big[1,1 +\triangle+ |\eta|\big],
    \quad y \in \Z.
  \end{equation}
  Under the measure $P_x^{\zeta, \eta}$, $X$ is a continuous-time
  nearest-neighbor random walk (started at $x$)  with spatially inhomogeneous
  transition probabilities
  \begin{equation}\label{eq:p_x_q_x}
    p^{\zeta, \eta}(y,y\pm1) \coloneq
    \frac{Z_{y\pm 1,y}^{\zeta,\eta}}{2\lambda^{\zeta,\eta}(y)},
    \quad y \in \Z,
  \end{equation}
  and jump rates $(\lambda^{\zeta,\eta} (y))_{y \in \Z}$.
\end{proposition}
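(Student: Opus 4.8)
The plan is to show that $P_x^{\zeta,\eta}$ coincides with the law of the continuous-time nearest-neighbor Markov chain $Y=(Y_t)_{t\ge0}$ started at $x$ with jump rates $\lambda^{\zeta,\eta}(\cdot)$ and jump probabilities $p^{\zeta,\eta}(\cdot,\cdot\pm1)$, by matching their restrictions to the stopped $\sigma$-algebras $\sigma((X_{t\wedge H_y})_{t\ge0})$ through an explicit path-space change-of-measure computation.

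First I would check that $Y$ is well defined. The rates $\lambda^{\zeta,\eta}(y)=1-\zeta(y)-\eta$ lie in $[1,1+\triangle+|\eta|]$ by the hypotheses on $\zeta$ and $\eta$, so $Y$ is non-explosive as soon as $p^{\zeta,\eta}(y,y+1)+p^{\zeta,\eta}(y,y-1)=1$ for every $y\in\Z$. In view of \eqref{eq:p_x_q_x} and the convention $Z_{y+1,y}^{\zeta,\eta}=1/Z_{y,y+1}^{\zeta,\eta}$, this reduces to the identity $Z_{y-1,y}^{\zeta,\eta}+Z_{y+1,y}^{\zeta,\eta}=2\lambda^{\zeta,\eta}(y)$. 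I would establish it by a first-jump decomposition of the simple random walk started at $y$: writing $\tau$ for its first jump time, which is an $\mathrm{Exp}(1)$ variable independent of the (uniform) choice $X_\tau\in\{y-1,y+1\}$, and using the strong Markov property together with \eqref{eq:Zxy} (in the form $Z_{y-1,y+1}^{\zeta,\eta}=Z_{y-1,y}^{\zeta,\eta}Z_{y,y+1}^{\zeta,\eta}$), one gets $Z_{y,y+1}^{\zeta,\eta}=E\big[e^{(\zeta(y)+\eta)\tau}\big]\cdot\tfrac12\big(1+Z_{y-1,y}^{\zeta,\eta}Z_{y,y+1}^{\zeta,\eta}\big)$; since $E[e^{(\zeta(y)+\eta)\tau}]=(1-\zeta(y)-\eta)^{-1}=\lambda^{\zeta,\eta}(y)^{-1}$ (finite because $\zeta(y)+\eta\le0$, and $Z_{y,y+1}^{\zeta,\eta}\in(0,1]$ since $H_{y+1}<\infty$ $P_y$-a.s.), rearranging gives $2\lambda^{\zeta,\eta}(y)=Z_{y-1,y}^{\zeta,\eta}+1/Z_{y,y+1}^{\zeta,\eta}$, as required.

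Next, I would fix $y\ge x$ and compare, on $\sigma((X_{t\wedge H_y})_{t\ge0})$, the law of $Y$ stopped at $H_y$ with $P_{x,y}^{\zeta,\eta}$. A nearest-neighbor path run until it first reaches $y$ makes finitely many jumps---almost surely under $P_x$ (recurrence of the simple random walk) and under the law of $Y$ (non-explosion)---so it is encoded by its jump sequence $x=z_0,z_1,\dots,z_n=y$ (with $z_i\ne y$ for $i<n$) and its holding times $s_1,\dots,s_n$, and both laws are absolutely continuous with respect to a common reference measure on such configurations. Taking the ratio of the two densities at a given configuration, the holding-time factors yield $\prod_{i=1}^{n}\frac{\lambda^{\zeta,\eta}(z_{i-1})e^{-\lambda^{\zeta,\eta}(z_{i-1})s_i}}{e^{-s_i}}=\big(\prod_{i=1}^{n}\lambda^{\zeta,\eta}(z_{i-1})\big)\exp\big(\int_0^{H_y}(\zeta(X_s)+\eta)\,ds\big)$, using $\lambda^{\zeta,\eta}(z)-1=-\zeta(z)-\eta$ and that the path sits at $z_{i-1}$ for time $s_i$; the jump factors yield $\prod_{i=1}^{n}\frac{p^{\zeta,\eta}(z_{i-1},z_i)}{1/2}=\prod_{i=1}^{n}\frac{Z_{z_i,z_{i-1}}^{\zeta,\eta}}{\lambda^{\zeta,\eta}(z_{i-1})}$; and the product $\prod_{i=1}^{n}Z_{z_i,z_{i-1}}^{\zeta,\eta}$ telescopes, via the extended cocycle relation \eqref{eq:Zxy}, to $Z_{y,x}^{\zeta,\eta}=1/Z_{x,y}^{\zeta,\eta}$. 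Multiplying the three contributions, the factors $\lambda^{\zeta,\eta}(z_{i-1})$ cancel and the Radon--Nikodym derivative of the stopped law of $Y$ with respect to $P_x$ on $\sigma((X_{t\wedge H_y})_{t\ge0})$ comes out to be $\big(Z_{x,y}^{\zeta,\eta}\big)^{-1}\exp\big(\int_0^{H_y}(\zeta(X_s)+\eta)\,ds\big)$, which is precisely the density defining $P_{x,y}^{\zeta,\eta}$. Hence the two stopped laws coincide.

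Finally, since this holds for every $y\ge x$ and, under the law of $Y$, $H_y\uparrow\infty$ almost surely (again by non-explosion, as only finitely many jumps occur in any bounded time interval), the restrictions of the law of $Y$ to $\sigma((X_{t\wedge H_y})_{t\ge0})$, $y\ge x$, agree with the consistent family $(P_{x,y}^{\zeta,\eta})_{y\ge x}$ on an algebra generating $\sigma((X_t)_{t\ge0})$, so $P_x^{\zeta,\eta}$ equals the law of $Y$ by uniqueness of the Kolmogorov extension. I expect the main obstacle to lie not in the algebra---the cocycle identity \eqref{eq:Zxy} makes the density telescope cleanly---but in the measure-theoretic bookkeeping: setting up the common reference measure on stopped nearest-neighbor paths, making the first-jump and strong Markov decompositions rigorous, and justifying the passage from agreement up to each $H_y$ to agreement on the whole path space, which is exactly where non-explosion of $Y$ is used.
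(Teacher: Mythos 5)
Your argument is correct, and it reaches the proposition by a route that differs from the paper's in a meaningful way. The paper works forward from the definition of $P_x^{\zeta,\eta}$: a first-step decomposition identifies the law of the first holding time and the first jump (deriving along the way the identity $2\lambda^{\zeta,\eta}(x)Z_{x,x+1}^{\zeta,\eta}=1+Z_{x-1,x+1}^{\zeta,\eta}$, which is exactly the identity you use to check that $p^{\zeta,\eta}(y,y+1)+p^{\zeta,\eta}(y,y-1)=1$), and the Markov property of $X$ under $P_x^{\zeta,\eta}$ is then verified separately by a truncation argument (working on the event $X^*_t<n$ and letting $n\to\infty$). You instead start from the candidate chain and compute the full Radon--Nikodym derivative of its law stopped at $H_y$ with respect to $P_x$ on path space; the product of holding-time and jump factors telescopes via the cocycle relation \eqref{eq:Zxy} to precisely the density $(Z_{x,y}^{\zeta,\eta})^{-1}\exp\big(\int_0^{H_y}(\zeta(X_s)+\eta)\,ds\big)$ defining $P_{x,y}^{\zeta,\eta}$, after which consistency and the Kolmogorov extension (which the paper already invokes to define $P_x^{\zeta,\eta}$) finish the job. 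Your route buys a cleaner treatment of the Markov property---it comes for free from the product form of the density---at the price of the path-space bookkeeping you flag; the paper's route avoids setting up a reference measure on stopped paths but pays for it with the explicit $n\to\infty$ limit. One small point worth making explicit in your write-up: the density you compute integrates to one against $P_x$ (because $P_{x,y}^{\zeta,\eta}$ is a probability measure), and this is what guarantees that $H_y<\infty$ almost surely under the candidate chain's law, so that the two stopped laws genuinely coincide rather than merely being proportional on $\{H_y<\infty\}$.
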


\begin{proof}
  Let $Y=(Y_n)_{n \geq 0}$ be a discrete-time simple random walk and
  $(e_i)_{i \geq 1}$ a family  of
  i.i.d.~Exp$(1)$-distributed random variables independent of $Y$ such that, $P_x$-a.s.,
  \begin{equation*}
    X_t=Y_{N(t)}, \quad N(t)
    =0 \vee \sup\big\{m \geq 1: e_1 + \dots+e_m \leq t\big\}.
  \end{equation*}
  We begin by studying the law of $e_1$ and $Y_1$ under the measure $P_x^{\zeta, \eta}$.
  By definition of the probability measure $P_x^{\zeta,\eta}$,
  \begin{equation*}
    Z_{x,x+1}^{\zeta,\eta} P_x^{\zeta,\eta}(e_1 \geq t)
    = E_x\left[e^{\int_0^{H_{x+1}} (\zeta(X_s)+\eta ) ds}; e_1 \geq t \right].
  \end{equation*}
  By the Markov property of $X$ under $P_x$, this is equal to
  \begin{align*}
    & E_x\Big[E_{Y_1}\Big[e^{\int_0^{H_{x+1}} (\zeta(X_s)+\eta ) ds} \Big]
      e^{\int_0^{e_1} (\zeta(X_s)+\eta ) ds}; e_1 \geq t \Big]
    \\& =   \bigg(\frac{1}{2}+ \frac{1}{2}
      E_{x-1}\Big[e^{\int_{0}^{H_{x+1}} (\zeta(X_s)+\eta )ds }\Big]\bigg)
    \int_{t}^\infty e^{-s+s(\zeta(x)+\eta)}ds
    \\& =\big(1+Z_{x-1,x+1}^{\zeta, \eta} \big)
    \frac{e^{-(1-\zeta(x)-\eta)t}}{2(1-\zeta(x)-\eta)}
    = \big(1+Z_{x-1,x+1}^{\zeta, \eta} \big)
    \frac{e^{-\lambda^{\zeta,\eta}(x)t}}{2\lambda^{\zeta,\eta}(x)}.
  \end{align*}
  In particular, with $t=0$ this gives
  \begin{equation}\label{eq:formula_lambda}
    2\lambda^{\zeta,\eta}(x)Z_{x,x+1}^{\zeta,\eta}=1+Z_{x-1,x+1}^{\zeta,\eta}.
  \end{equation}
  By combining these observations we get
  $P_x^{\zeta,\eta}(e_1 \geq t)  = e^{-\lambda^{\zeta,\eta}(x)t}$. Hence,
  under $P_x^{\zeta, \eta}$, the holding time $e_1$ is exponentially
  distributed with parameter $\lambda^{\zeta,\eta}(x)$. Similarly, using the
  Markov property of $X $ under $P_x$ once more,
  \begin{align*}
    Z_{x,x+1}^{\zeta,\eta} P_x^{\zeta,\eta}(Y_1 = x\pm1)
    &= E_x\Big[e^{\int_0^{H_{x+1}} (\zeta(X_s)+\eta ) ds}; Y_1 = x\pm1 \Big]\\
    &= \frac{1}{2} E_{x\pm1}\Big[e^{\int_{0}^{H_{x+1}} (\zeta(X_s)+\eta )ds }\Big]
    \int_{0}^\infty e^{-s(1-\zeta(x)-\eta)}ds.
  \end{align*}
  Therefore, the first step of the random walk has the transition probabilities
  \begin{align}
    \label{eq:p_x}
    &P_x^{\zeta,\eta}(Y_1=x+1)=\frac{1}{2Z_{x,x+1}^{\zeta,\eta}(1-\zeta(x)-\eta)}=
    \frac{Z_{x+1,x}^{\zeta,\eta}}{2\lambda^{\zeta,\eta}(x)},\\ \label{eq:q_x}
    &P_x^{\zeta,\eta}(Y_1=x-1)=\frac{Z_{x-1,x+1}^{\zeta,\eta}} {2Z_{x,x+1}^{\zeta,\eta}(1-\zeta(x)-\eta)}
    =\frac{Z_{x-1,x}^{\zeta,\eta}}{2\lambda^{\zeta,\eta}(x)}.
  \end{align}
  Note that the last equality in \eqref{eq:q_x} follows from \eqref{eq:Zxy}.

  To complete the proof, it remains to to show that $X$ satisfies the Markov
  property under the measure $P_x^{\zeta, \eta}$. Hence, we
  need to show that, for all $s,t \geq 0$, $x,y,z \in \Z$ and
  $A\in \sigma((X_r)_{r \leq t})$,
  \begin{align}
    \label{eq:markov}
    P_x^{\zeta, \eta}(A, X_t=y, X_{t+s}=z)
    = P_x^{\zeta, \eta}(A,X_t=y ) P_y^{\zeta, \eta}(X_s=z).
  \end{align}
  To this end, let $n >|x|+|y|+|z|$. In the following, for each process $W$
  and each $0 \leq u \leq r$ we let
  $W^{*}_{u,r}\coloneq \sup \{W_s:u \leq s \leq r\}$ and $W^{*}_r\coloneq W^{*}_{0,r}$.

  By the Markov property of $X$ under $P_x$,
  \begin{align*}
    & E_x\left[ e^{\int_0^{H_{n}} (\zeta(X_r) + \eta)dr};
      A,X_t=y,X^*_t<n, X_{t+s}=z, X_{t,t+s}^{*}-y<n \right]
    \\&= E_x\left[ e^{\int_0^t (\zeta(X_r) + \eta)dr};
      A,X_t=y , X^*_t<n\right]
    E_y\left[ e^{\int_{0}^{H_{n}} (\zeta(X_r) + \eta)dr};X_s=z, X^*_s<n\right]
    \\&=E_x\left[ e^{\int_0^t (\zeta(X_r) + \eta)dr}; A,X_t=y , X^*_t<n \right]
    Z_{y,n}^{\zeta, \eta} P_y^{\zeta, \eta} (X_s=z, X^*_s<n ).
  \end{align*}
  In the last step,
  $\{X_s =z, X_s^* <n\} \subset \sigma((X_{t \wedge H_n})_{t \geq 0})$ and
  the definition of $P_y^{\zeta, \eta}$ were used. Moreover, again by the
  Markov property of $X$ under $P_x$,
  \begin{align*}
    & E_x\left[ e^{\int_0^t (\zeta(X_r) + \eta)dr}; A,X_t=y , X^*_t<n\right] E_y \left[ e^{\int_0^{H_n}(\zeta(X_r)+\eta) dr} \right] \\
    &=E_x\left[ e^{\int_0^{t} (\zeta(X_r) + \eta)dr+\int_t^{H_n} (\zeta(X_r) + \eta)dr}; A,X_t=y , X^*_t<n \right] \\
    &= E_x\left[ e^{\int_0^{H_n} (\zeta(X_r) + \eta)dr} ; A,X_t=y , X^*_t<n \right] = Z_{x,n}^{\zeta, \eta} P_{x}^{\zeta, \eta}(A,X_t=y , X^*_t<n).
  \end{align*}
  By combining the two previous displays, we obtain
  \begin{align*}
    & E_x\Big[ e^{\int_0^{H_{n}} (\zeta(X_r) + \eta)dr};
      A,X_t=y,X^*_t<n, X_{t+s}=z, X_{t,t+s}^{*}-y<n \Big] \\
    &= Z_{x,n}^{\zeta, \eta} P_{x}^{\zeta, \eta} (A,X_t=y , X^*_t<n)
    P_y^{\zeta, \eta} (X_s=z, X^*_s<n ).
  \end{align*}
  Dividing both sides
  by $Z_{x,n}^{\zeta, \eta}$ and taking the limit $n \rightarrow \infty$ gives \eqref{eq:markov}.
  This concludes the proof of the proposition.
\end{proof}

By \eqref{eq:p_x}, \eqref{eq:q_x} and \eqref{eq:Zxy}, the ratio of the
transition probabilities satisfies
\begin{equation}\label{eq:ratio_q_x_p_x}
  \frac{P_x^{\zeta,\eta}(Y_1=x-1)}{P_x^{\zeta,\eta}(Y_1=x+1)}
  = Z_{x-1,x+1}^{\zeta,\eta} \in (0,1].
\end{equation}
Hence, under the tilted measure, the random walk exhibits a positive drift.
Since (for given $\eta$) this drift depends on $\zeta$, we can use
\eqref{eq:ratio_q_x_p_x} to compare the law of $X$ under the measures
$P_x^{0,\eta}$, $P_x^{\zeta,\eta}$ and $P_x^{-\triangle, \eta} $. (In the
  following,  $P_x^{\gamma, \eta}$ denotes $P_x^{\zeta, \eta}$ with
  $\zeta \equiv \gamma \leq 0$.) More precisely, in
Corollary~\ref{cor:coupling} below we compare the law of the hitting time
$H_y$ under this three measures. Note that this is particularly useful, as
under the measures $P_x^{0, \eta}$ and $P_x^{-\triangle, \eta}$ the process $X$ is simply a
continuous-time random walk with a constant drift.

The coupling leading to Corollary~\ref{cor:coupling} is presented in the
following lemma. Before stating the result, we recall that the transition
probabilities and jump rates of $X$ under the measure $P_x^{\zeta, \eta}$ are
given by, respectively, \eqref{eq:p_x_q_x} and \eqref{eq:lambda}.

\begin{lemma}
  \label{lemma:coupling}
  For any $x \in \Z$, $\eta \leq 0$ and
  $ \zeta: \Z \mapsto [- \triangle, 0]$, there exist discrete-time
  nearest-neighbor random walks $Y^{0, \eta}$, $Y^{\zeta, \eta}$ and
  $Y^{-\triangle, \eta}$ with transition probabilities given by, respectively,
  \begin{equation*}
    p^{0,\eta}(y,y\pm1),p^{\zeta,\eta} (y,y\pm1) \text{ and } p^{- \triangle,\eta}(y,y\pm1), \quad y \in \Z,
  \end{equation*}
  such that
  \begin{equation}\label{eq:coupling_Y}
    Y^{0, \eta}_0 = Y^{\zeta, \eta}_0
    = Y^{ - \triangle, \eta}_0=x
    \quad \text{and} \quad
    Y^{0, \eta}_n \leq Y^{\zeta, \eta}_n
    \leq Y^{ - \triangle, \eta}_n,  \quad n \geq 1.
  \end{equation}
  Moreover, there exist counting processes
  $N^{0,\eta},N^{\zeta, \eta},N^{- \triangle, \eta}:[0,\infty) \rightarrow
  \N$ with jump size one, such that
  \begin{equation}\label{eq:N}
    N^{0,\eta} \leq N^{\zeta, \eta}\leq N^{- \triangle, \eta}
  \end{equation}
  and so that the processes
  \begin{equation*}
    X^{0, \eta}:
    =\left(Y^{0, \eta}_{N^{0,\eta}(t)}\right)_{t \geq 0}, \ X^{\zeta, \eta}:
    =\left(Y^{\zeta, \eta}_{N^{\zeta,\eta}(t)}\right)_{t \geq 0}   \text{ and
    }\ X^{-\triangle, \eta}:
    =\left(Y^{-\triangle, \eta}_{N^{-\triangle,\eta}(t)}\right)_{t \geq 0}
  \end{equation*}
  have the
  same law as $X$ under $P_x^{0, \eta}$, $P_x^{\zeta, \eta}$
  and $P_x^{- \triangle, \eta}$, respectively.
\end{lemma}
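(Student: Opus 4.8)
The plan is to build the three discrete-time walks and the three counting processes on a common probability space by a single stepwise coupling, exploiting the fact that, by \eqref{eq:ratio_q_x_p_x}, the rightward transition probability $p^{\zeta,\eta}(y,y+1)$ is monotone in the tilt parameter in the appropriate direction. First I would record the needed monotonicity of the one-step transition kernels: using \eqref{eq:p_x}, \eqref{eq:q_x} and \eqref{eq:Zxy}, together with the fact that $Z_{x-1,x+1}^{\zeta,\eta}$ is increasing in $\zeta$ (pointwise), one checks that for every $y\in\Z$
\begin{equation*}
  p^{-\triangle,\eta}(y,y+1)\ \le\ p^{\zeta,\eta}(y,y+1)\ \le\ p^{0,\eta}(y,y+1),
\end{equation*}
i.e.\ the more negative the tilt, the larger the drift. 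This is the algebraic heart of the coupling and is the step I expect to require the most care, since it hinges on the precise dependence of the normalizing constants $Z_{x,y}^{\zeta,\eta}$ on $\zeta$; the bound $\zeta\in[-\triangle,0]$ is exactly what sandwiches $p^{\zeta,\eta}$ between the two constant-tilt kernels.

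Next I would construct the walks. Introduce i.i.d.\ uniform random variables $(U_n)_{n\ge1}$ on $[0,1]$ and define, recursively, three walks started at $x$ that at step $n$ move right iff $U_n$ lies below the corresponding rightward transition probability evaluated at the current position. The monotonicity just established, combined with a straightforward induction on $n$, yields $Y^{0,\eta}_n\le Y^{\zeta,\eta}_n\le Y^{-\triangle,\eta}_n$: if the ordering holds at time $n$ and, say, $Y^{0,\eta}_n=Y^{\zeta,\eta}_n$, then the kernel ordering forces $Y^{\zeta,\eta}$ to move right whenever $Y^{0,\eta}$ does; if instead $Y^{0,\eta}_n<Y^{\zeta,\eta}_n$ the gap can shrink by at most $2$ in one step, so it stays nonnegative. (A minor point is that the nearest-neighbour kernels evaluated at different sites are genuinely different functions, so one must use the \emph{monotone rearrangement}/standard coupling separately at each site; this is routine.) By construction each $Y^{\cdot,\eta}$ is a Markov chain with the prescribed transition probabilities.

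For the counting processes, I would exploit that $\lambda^{\zeta,\eta}(y)=1-\zeta(y)-\eta$ is likewise monotone in the tilt, with $\lambda^{0,\eta}\le\lambda^{\zeta,\eta}\le\lambda^{-\triangle,\eta}$ \emph{as functions of the walk's position}. Here a cleaner route than a direct thinning is the following: let $\Pi$ be a rate-$(1+\triangle+|\eta|)$ Poisson process (a common upper bound for all three rates), and accept each point of $\Pi$ occurring while the relevant walk sits at position $y$ with probability $\lambda^{\cdot,\eta}(y)/(1+\triangle+|\eta|)$, using a fresh independent uniform for each point; feeding the accepted points as jump epochs to the corresponding $Y^{\cdot,\eta}$ produces $X^{\cdot,\eta}$ with exactly the law of $X$ under $P^{\cdot,\eta}_x$ (by Proposition~\ref{prop:tilted_RW}). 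Because the three walks are ordered in space and the acceptance thresholds are ordered accordingly, one can—again by a stepwise argument coupling the acceptance uniforms—arrange $N^{0,\eta}(t)\le N^{\zeta,\eta}(t)\le N^{-\triangle,\eta}(t)$ for all $t$. The only subtlety is that the acceptance probability for, say, $X^{\zeta,\eta}$ depends on $Y^{\zeta,\eta}_{N^{\zeta,\eta}(t^-)}$, which differs from the positions of the other two walks; one handles this exactly as in the spatial coupling, peeling off one Poisson point at a time and checking that the inequalities $N^{0,\eta}\le N^{\zeta,\eta}\le N^{-\triangle,\eta}$ and $Y^{0,\eta}\le Y^{\zeta,\eta}\le Y^{-\triangle,\eta}$ are preserved jointly. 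Finally, that each $X^{\cdot,\eta}$ has the asserted law follows from Proposition~\ref{prop:tilted_RW}, since a continuous-time nearest-neighbour walk with transition probabilities $p^{\cdot,\eta}$ and position-dependent jump rates $\lambda^{\cdot,\eta}$ is precisely $X$ under $P^{\cdot,\eta}_x$; this completes the proof.
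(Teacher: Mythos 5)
Your construction is essentially the paper's for the spatial half, but note that your displayed inequality is written backwards: the correct ordering is
\begin{equation*}
  p^{0,\eta}(y,y+1)\ \le\ p^{\zeta,\eta}(y,y+1)\ \le\ p^{-\triangle,\eta}(y,y+1),
\end{equation*}
which follows because $\zeta\mapsto Z^{\zeta,\eta}_{y-1,y+1}$ is non-decreasing, so the ratio $p^{\zeta,\eta}(y,y-1)/p^{\zeta,\eta}(y,y+1)=Z^{\zeta,\eta}_{y-1,y+1}$ is \emph{largest} for $\zeta\equiv 0$ and smallest for $\zeta\equiv-\triangle$. As you state it, the most negative tilt would have the smallest rightward probability, which contradicts your own verbal gloss (``the more negative the tilt, the larger the drift'') and would yield the reverse ordering $Y^{-\triangle,\eta}\le Y^{\zeta,\eta}\le Y^{0,\eta}$. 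Since your induction actually uses the correct direction, this is a sign slip rather than a conceptual gap; also, your ``gap shrinks by at most $2$'' step tacitly uses that the gap between two nearest-neighbour walks started at the same point is always even, which is worth saying explicitly.

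For the counting processes you take a genuinely different (and valid) route. The paper constructs all three clocks from one i.i.d.\ sequence $(e_i)$ of exponentials by dividing $e_m$ by the relevant rate ($1-\eta$, $1-\zeta(Y^{\zeta,\eta}_{m-1})-\eta$, or $1+\triangle-\eta$); since $1-\eta\le 1-\zeta(y)-\eta\le 1+\triangle-\eta$ for all $y$, the partial sums are ordered pointwise and \eqref{eq:N} is immediate. Your uniformization against a dominating rate-$(1+\triangle+|\eta|)$ Poisson process with shared acceptance uniforms achieves the same thing; in fact the subtlety you flag about the acceptance probability depending on $Y^{\zeta,\eta}_{N^{\zeta,\eta}(t^-)}$ is moot, because the bounds on $\lambda^{\zeta,\eta}$ are uniform in the position, so the acceptance thresholds are ordered no matter where each walk sits and no joint induction with the spatial coupling is needed. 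Both constructions identify the law of $X^{\cdot,\eta}$ via Proposition~\ref{prop:tilted_RW}; the paper's is marginally more economical, yours is the more standard ``textbook'' thinning argument.
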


    Before proving Lemma~\ref{lemma:coupling}, we present a corollary of it, which serves as a discrete-space analog of Lemma 4.3 in \cite{CDO25}.
    Because of the differences between the discrete and the continuous-space setting, the resulting statement is slightly weaker than in \cite{CDO25}, but it will be sufficient for our purposes.
\begin{corollary}
  \label{cor:coupling}
  For any $x \leq y$, $\eta \leq 0$, $t \geq 0$ and
  $\zeta: \Z \mapsto [-\triangle,0]$,
  \begin{equation}
    \label{eq:coupling}
    P_x^{0,\eta}(H_y \leq t) \leq P_x^{\zeta, \eta}(H_y \leq t)
    \leq P_x^{-\triangle, \eta}(H_y \leq t).
  \end{equation}

  \begin{proof}[Proof of Corollary~\ref{cor:coupling}]
    We prove the first inequality, the second then follows by an analogous
    argument. By Lemma~\ref{lemma:coupling}, $X^{0,\eta}$ and
    $X^{\zeta,\eta}$ have the same law as $X$ under, respectively,
    $P_x^{0, \eta}$ and $P_x^{\zeta, \eta}$. In particular, it is enough to
    show that
    \begin{equation*}
      \big\{ \exists s \leq t: X^{0, \eta}_s \geq y \big\}
      \subset \big\{ \exists s \leq t: X^{\zeta, \eta}_s \geq y \big\}
    \end{equation*}
    or, equivalently, that
    \begin{equation}
      \label{eq:hitting_Y}
      \big\{ \exists s \leq t: Y^{0, \eta}_{N^{0, \eta}(s) } \geq y \big\}
      \subset \big\{ \exists s \leq t: Y^{\zeta, \eta}_{N^{\zeta, \eta}(s) } \geq y \big\}.
    \end{equation}
    By \eqref{eq:N} and since $N^{\zeta, \eta}$ is a counting process with
    jump size one, we have
    \begin{equation}\label{eq:N(s)_N(t)}
      N^{0, \eta}(s) \leq N^{\zeta, \eta}(s) \leq N^{\zeta, \eta}(t) \text{ if } s \leq t, \text{ and } N^{\zeta, \eta}([0,t]) = \{0,\dots,N^{\zeta, \eta}(t)\}.
    \end{equation}
    If $Y^{0, \eta}_{N^{0, \eta}(s) } \geq y$ for some $s \leq t$ then, by \eqref{eq:coupling_Y}, $Y^{\zeta, \eta}_{N^{0, \eta}(s) } \geq y$. By \eqref{eq:N(s)_N(t)}, this implies
    \begin{equation*}
        Y^{\zeta, \eta}_{N^{\zeta, \eta}(r) } \geq y, \quad \text{ for some } r \leq t.
    \end{equation*}
    We deduce that \eqref{eq:hitting_Y} is satisfied, which concludes the proof.
  \end{proof}

\end{corollary}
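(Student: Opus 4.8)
The plan is to read off the two hitting-time bounds in \eqref{eq:coupling} from the pathwise coupling provided by Lemma~\ref{lemma:coupling}. I will prove the first inequality, $P_x^{0,\eta}(H_y\le t)\le P_x^{\zeta,\eta}(H_y\le t)$; the second is obtained verbatim, with the pair $(\zeta,-\triangle)$ replacing $(0,\zeta)$ and using the right-hand inequalities in \eqref{eq:coupling_Y} and \eqref{eq:N}. Fix $x\le y$, $\eta\le0$, $t\ge0$ and $\zeta$, and work on the probability space of Lemma~\ref{lemma:coupling} carrying $Y^{0,\eta},Y^{\zeta,\eta},N^{0,\eta},N^{\zeta,\eta}$ and the time-changed processes $X^{0,\eta}=(Y^{0,\eta}_{N^{0,\eta}(s)})_{s\ge0}$, $X^{\zeta,\eta}=(Y^{\zeta,\eta}_{N^{\zeta,\eta}(s)})_{s\ge0}$. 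Since these have the same laws as $X$ under $P_x^{0,\eta}$ and $P_x^{\zeta,\eta}$, and the latter measures live on $\sigma((X_s)_{s\ge0})$, it suffices to compare the probabilities of $\{H_y\le t\}$ for these two processes.

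Because $x\le y$ and the underlying walk is nearest-neighbour, it cannot jump over $y$, so $\{H_y\le t\}=\{\sup_{s\le t}X_s\ge y\}$; the claim therefore reduces to the pathwise bound $\sup_{s\le t}X^{0,\eta}_s\le\sup_{s\le t}X^{\zeta,\eta}_s$. To see this, note that $N^{0,\eta}$ and $N^{\zeta,\eta}$ are counting processes with unit jumps started at $0$, so on $[0,t]$ each attains exactly the integer values $\{0,1,\dots,N^{\cdot,\eta}(t)\}$; hence $\sup_{s\le t}X^{0,\eta}_s=\max_{0\le n\le N^{0,\eta}(t)}Y^{0,\eta}_n$ and similarly with the index $\zeta$. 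Combining $Y^{0,\eta}_n\le Y^{\zeta,\eta}_n$ for all $n$ with $N^{0,\eta}(t)\le N^{\zeta,\eta}(t)$ (both from \eqref{eq:coupling_Y}--\eqref{eq:N}) gives
\begin{equation*}
  \sup_{s\le t}X^{0,\eta}_s
  =\max_{0\le n\le N^{0,\eta}(t)}Y^{0,\eta}_n
  \le\max_{0\le n\le N^{0,\eta}(t)}Y^{\zeta,\eta}_n
  \le\max_{0\le n\le N^{\zeta,\eta}(t)}Y^{\zeta,\eta}_n
  =\sup_{s\le t}X^{\zeta,\eta}_s,
\end{equation*}
whence $P_x^{0,\eta}(H_y\le t)=\PP(\sup_{s\le t}X^{0,\eta}_s\ge y)\le\PP(\sup_{s\le t}X^{\zeta,\eta}_s\ge y)=P_x^{\zeta,\eta}(H_y\le t)$. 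Equivalently, one can argue directly with the event inclusion: if $X^{0,\eta}_s\ge y$ for some $s\le t$ then $Y^{\zeta,\eta}_{N^{0,\eta}(s)}\ge Y^{0,\eta}_{N^{0,\eta}(s)}\ge y$, and since $N^{0,\eta}(s)\le N^{\zeta,\eta}(s)\le N^{\zeta,\eta}(t)$ and $N^{\zeta,\eta}$ realises every intermediate integer value on $[0,t]$, there is $r\le t$ with $N^{\zeta,\eta}(r)=N^{0,\eta}(s)$, so $X^{\zeta,\eta}_r\ge y$.

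The one subtlety, which I would flag as the main obstacle, is that the coupling of Lemma~\ref{lemma:coupling} does \emph{not} order the continuous-time paths pointwise in time: although $Y^{0,\eta}_{N^{0,\eta}(s)}\le Y^{\zeta,\eta}_{N^{0,\eta}(s)}$, the process $X^{\zeta,\eta}$ has by time $s$ typically taken strictly more steps than $X^{0,\eta}$, and those extra nearest-neighbour steps may move it downward, so in general $X^{0,\eta}_s\not\le X^{\zeta,\eta}_s$. What is preserved is the running maximum, and since $\{H_y\le t\}$ depends only on $\sup_{s\le t}X_s$ (here crucially using $x\le y$), phrasing the comparison through the running maximum---equivalently, exploiting that $N^{\zeta,\eta}$ passes through all intermediate integer values---is exactly what makes the argument go through. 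Everything else is routine.
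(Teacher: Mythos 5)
Your argument is correct and follows essentially the same route as the paper's proof: both rely on the coupling of Lemma~\ref{lemma:coupling} together with the facts that $N^{0,\eta}\le N^{\zeta,\eta}$ and that $N^{\zeta,\eta}$, having unit jumps, attains every integer in $\{0,\dots,N^{\zeta,\eta}(t)\}$ on $[0,t]$, so that the event $\{\exists s\le t: X^{0,\eta}_s\ge y\}$ is contained in $\{\exists s\le t: X^{\zeta,\eta}_s\ge y\}$; indeed your ``equivalently'' paragraph is precisely the paper's argument, and your running-maximum formulation is just a repackaging of it. The subtlety you flag (no pointwise-in-time ordering of the continuous-time paths) is real and is handled in the paper exactly as you handle it.
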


\begin{proof}[Proof of Lemma~\ref{lemma:coupling}]
  Since $\zeta \mapsto Z^{\zeta, \eta}_{y-1,y+1}$ is non-decreasing,
  \eqref{eq:ratio_q_x_p_x} implies that
  \begin{equation*}
    \frac{p^{0,\eta}(y,y-1)}{p^{0,\eta}(y,y+1)}
    \geq \frac{p^{\zeta,\eta}(y,y-1)}{p^{\zeta,\eta}(y,y+1)}
    \geq \frac{p^{\triangle,\eta}(y,y-1)}{p^{\triangle,\eta}(y,y+1)}.
  \end{equation*}
  Hence,
  $p^{0,\eta} (y,y+1) \leq p^{\zeta,\eta} (y,y+1) \leq  p^{-
    \triangle,\eta}(y,y+1)$, and the first part of the lemma  follows from a
  standard coupling argument.

  Let $\{e_i\}_{i \geq 1}$ be i.i.d.~Exp$(1)$-distributed random variables
  and, for $c \in \{0,\triangle\}$,
  \begin{equation}
    \label{eq:N_c}
    N^{-c,\eta}(t)\coloneq \sup \{n \in \N: e_1+\dots+e_m \leq t(1+ c-\eta)\}.
  \end{equation}
  It is straightforward to verify that $X^{0,\eta}$ and
  $X^{-\triangle, \eta}$ satisfy the desired properties. Similarly, by letting
  \begin{equation}\label{eq:N_zeta}
    N^{\zeta,\eta}(t):
    = \sup \left\{n \in \N: \frac{e_1}{1- \zeta(Y_0^{\zeta,\eta})-\eta}+
      \dots + \frac{e_m}{1- \zeta(Y_{m-1}^{\zeta,\eta})-\eta}\leq t\right\}
  \end{equation}
  we get that $\big(Y^{\zeta, \eta}_{N^{\zeta,\eta}(t)}\big)_{t \geq 0}$
  has the same law as $X$ under $P_x^{\zeta, \eta}$. We conclude the proof by
  noticing that, by \eqref{eq:N_c} and \eqref{eq:N_zeta},
  $-\triangle \leq \zeta \leq 0$ implies \eqref{eq:N}.
\end{proof}

We conclude this section with a first application of
Corollary~\ref{cor:coupling}, which provides an upper bound for the critical
speed $v_c$ defined in \eqref{eq:def_v_c}. Before doing so, we recall an
important property of $v_c$, which will be useful also in the next section:
By Lemmas~4.2 and~A.1 in \cite{CD20}, for every $v>v_c$ there exists a unique
$\overline{\eta}(v)<0$ satisfying
\begin{equation}\label{eq:veltilting}
  v=\frac{1}{\E\Big[E_0^{\zeta, \overline{\eta}(v) }[H_1]\Big]}.
\end{equation}
(Recall that $\E$ denotes expectation with respect to the environment.) Roughly
speaking this means that for any $v>v_c$ one can choose the parameter $\eta$ so that the
tilted random walk has, on average, speed $v$.
In what follows we denote by
\begin{equation}\label{eq:def_v_gamma}
  v^{\gamma, \eta}\coloneq \frac{1}{E_0^{\gamma,\eta}[H_1]}
\end{equation}
the speed of $X$ under the measure $P_0^{\gamma, \eta}$,
where $\gamma , \eta \le 0$.

\begin{lemma}\label{lemma:velcrit}
   For each $\gamma, \eta \leq 0$,
  \begin{equation}\label{eq:v_gamma}
      v^{\gamma, \eta}=\sqrt{(\gamma+\eta)(\gamma+\eta-2)}.
  \end{equation}
  Moreover, for $v_c \in (0,\infty)$ being be the critical speed defined in
  \eqref{eq:def_v_c},
  \begin{equation} \label{eq:velcrit}
    v_c
    \leq \sqrt{\triangle(2+\triangle)} < \sqrt{\es(2+\es)} < \es +1.
  \end{equation}
\end{lemma}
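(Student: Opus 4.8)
The plan is to prove the two identities \eqref{eq:v_gamma} and \eqref{eq:velcrit} in turn. For \eqref{eq:v_gamma} I would use that, when the environment is spatially constant, $\zeta\equiv\gamma$, Proposition~\ref{prop:tilted_RW} describes $X$ under $P_0^{\gamma,\eta}$ as a \emph{homogeneous} continuous-time nearest-neighbour random walk, whose speed can be computed explicitly. For \eqref{eq:velcrit} I would insert \eqref{eq:v_gamma} (with $\gamma=-\triangle$) into the stochastic comparison of Corollary~\ref{cor:coupling} and combine it with the variational description of $v_c$ recalled around \eqref{eq:veltilting}; the remaining two inequalities in \eqref{eq:velcrit} then follow from monotonicity of $x\mapsto\sqrt{x(x+2)}$.

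For \eqref{eq:v_gamma}, fix $\gamma,\eta\le0$ and set $\lambda\coloneqq\lambda^{\gamma,\eta}(0)=1-\gamma-\eta\ge1$. As $\zeta\equiv\gamma$ is translation invariant, all $Z^{\gamma,\eta}_{y,y+1}$ equal a common $Z\coloneqq Z^{\gamma,\eta}_{0,1}=E_0[e^{(\gamma+\eta)H_1}]\in(0,1]$, and by \eqref{eq:Zxy} we get $Z^{\gamma,\eta}_{-1,1}=Z^2$; feeding this into \eqref{eq:formula_lambda} gives the quadratic $Z^2-2\lambda Z+1=0$, hence $Z=\lambda-\sqrt{\lambda^2-1}$ (the root in $(0,1]$). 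By Proposition~\ref{prop:tilted_RW} together with \eqref{eq:p_x} and \eqref{eq:q_x}, under $P_0^{\gamma,\eta}$ the walk jumps at constant rate $\lambda$ and steps right, resp.\ left, with probability $p\coloneqq\tfrac1{2\lambda Z}$, resp.\ $q\coloneqq\tfrac{Z}{2\lambda}$ (consistently, $p+q=\tfrac{1+Z^2}{2\lambda Z}=1$). Consequently $E_0^{\gamma,\eta}[H_1]=\tfrac1\lambda\cdot\tfrac1{p-q}$, so by \eqref{eq:def_v_gamma},
\begin{equation*}
  v^{\gamma,\eta}=\lambda(p-q)=\frac{1-Z^2}{2Z}=\lambda-Z=\sqrt{\lambda^2-1}=\sqrt{(\gamma+\eta)(\gamma+\eta-2)},
\end{equation*}
where the third equality uses $1-Z^2=2Z(\lambda-Z)$, itself a rewriting of $Z^2-2\lambda Z+1=0$. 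This step I expect to be routine once \eqref{eq:formula_lambda} is available.

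For \eqref{eq:velcrit}, fix $\eta\le0$. By Corollary~\ref{cor:coupling}, for every realization of $\zeta$ the hitting time $H_1$ under $P_0^{\zeta,\eta}$ stochastically dominates $H_1$ under $P_0^{-\triangle,\eta}$; averaging first over the walk and then over the environment gives $\E[E_0^{\zeta,\eta}[H_1]]\ge E_0^{-\triangle,\eta}[H_1]$, so, by \eqref{eq:v_gamma} with $\gamma=-\triangle$,
\begin{equation*}
  \bigl(\E[E_0^{\zeta,\eta}[H_1]]\bigr)^{-1}\le v^{-\triangle,\eta}=\sqrt{(\triangle-\eta)(\triangle-\eta+2)}.
\end{equation*}
I would then invoke that $v_c=\inf_{\eta<0}\bigl(\E[E_0^{\zeta,\eta}[H_1]]\bigr)^{-1}$; this follows from \cite{CD20} (Lemmas~4.2 and~A.1), whose content is that $\eta\mapsto\E[E_0^{\zeta,\eta}[H_1]]$ is continuous and strictly monotone on $(-\infty,0)$ and that the map $\overline\eta(\cdot)$ in \eqref{eq:veltilting} is a bijection of $(v_c,\infty)$ onto $(-\infty,0)$. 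Taking the infimum over $\eta<0$ in the last display and using that $x\mapsto\sqrt{x(x+2)}=\sqrt{(x+1)^2-1}$ is strictly increasing on $[0,\infty)$ while $\triangle-\eta>\triangle$ for $\eta<0$ gives $v_c\le\sqrt{\triangle(2+\triangle)}$. Finally $\triangle=\es-\ei<\es$ by \eqref{eq:ellipticity}, and the same monotonicity yields $\sqrt{\triangle(2+\triangle)}<\sqrt{\es(2+\es)}=\sqrt{(\es+1)^2-1}<\es+1$, which completes \eqref{eq:velcrit}.

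The only step that I expect to require genuine care, the rest being the elementary computation above and the already-established Corollary~\ref{cor:coupling}, is the identification $v_c=\inf_{\eta<0}(\E[E_0^{\zeta,\eta}[H_1]])^{-1}$: one must ensure that the speeds attainable by tilting with $\eta\in(-\infty,0)$ exhaust exactly the interval $(v_c,\infty)$ down to its infimum, rather than stopping at some strictly smaller threshold. This is precisely the role played by the monotonicity and continuity of the tilted mean hitting time in $\eta$ and by the cited results of \cite{CD20}.
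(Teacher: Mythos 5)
Your proposal is correct and takes essentially the same route as the paper: the speed formula is obtained from translation invariance of $Z^{\gamma,\eta}_{x,x+1}$, the quadratic coming from \eqref{eq:formula_lambda}, and the explicit drift of the homogeneous tilted walk, while the bound on $v_c$ combines Corollary~\ref{cor:coupling} with the characterization of $v_c$ from \cite{CD20} (your $\inf_{\eta<0}$ is the paper's $\lim_{\eta\to 0^-}$, equivalent by the monotonicity you invoke). No gaps.
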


\begin{proof}
  We first prove \eqref{eq:v_gamma}.
  By \eqref{eq:def_v_gamma} and \eqref{eq:p_x_q_x},
  \begin{equation}
    \label{eq:compute_v_gamma}
    \begin{split}
      v^{\gamma,\eta}&= \frac{1}{E_0^{\gamma,\eta}[H_1]}
      = \lambda^{\gamma,\eta}(0)\big(1-2p^{ \gamma, \eta}(0,-1)\big)
      \\ &=
      \lambda^{\gamma,\eta}(0)
      \Big(1-\frac{2Z_{-1,0}^{\gamma,\eta}}{2\lambda^{\gamma,\eta}(0)}\Big)
      = \lambda^{\gamma,\eta}(0)-Z_{-1,0}^{\gamma,\eta}
      =1-\gamma - \eta-Z_{-1,0}^{\gamma,\eta}.
    \end{split}
  \end{equation}
  In particular, \eqref{eq:v_gamma} follows once we show that
  \begin{equation}\label{eq:Z_gamma_eta}
    Z_{-1,0}^{\gamma,\eta}
    =1- \gamma - \eta-\sqrt{(\gamma+\eta)(\gamma+\eta-2)}.
  \end{equation}
  To prove \eqref{eq:Z_gamma_eta}, we first notice that
  $Z_{x,x+1}^{\gamma,\eta}=Z_{y,y+1}^{\gamma,\eta}$ for all $x,y \in \Z$. In
  particular, by \eqref{eq:Zxy}, we deduce that
  $Z_{-1,1}^{\gamma,\eta}=Z_{-1,0}^{\gamma,\eta}Z_{0,1}^{\gamma,\eta}
  =(Z_{-1,0}^{\gamma,\eta})^2$. By \eqref{eq:formula_lambda}, this implies
  \begin{equation*}
    2(1- \gamma - \eta)Z_{-1,0}^{\gamma,\eta}=1+(Z_{-1,0}^{\gamma,\eta})^2.
  \end{equation*}
  Solving this quadratic equation gives \eqref{eq:Z_gamma_eta}.

  We now prove \eqref{eq:velcrit}.
  Since $\triangle=\es- \ei$, we just need to prove the first inequality. By
  Lemma~A.1 and Proposition A.3 in \cite{CD20},
  \begin{equation}
    \label{eq1:v_c}
    v_c=\lim_{\eta \downarrow 0} \E\Big[E_0^{\zeta, \eta }[H_1]\Big]^{-1}.
  \end{equation}
  Moreover, Corollary~\ref{cor:coupling} implies that
  $\E[E_0^{\zeta,\eta}[H_1]] \geq E_0^{-\triangle,\eta}[H_1]$ for all
  $ \eta \leq 0$.  By combining this observation with \eqref{eq1:v_c}, \eqref{eq:def_v_gamma} and \eqref{eq:v_gamma}, we get
  \begin{align*}
    v_c \leq \lim_{\eta \downarrow 0}  \frac{1}{E_0^{-\triangle,\eta}[H_1]}
    = \lim_{\eta \downarrow 0} v^{- \triangle,\eta} =\lim_{\eta \downarrow 0}\sqrt{(-\triangle+\eta)(-\triangle+\eta-2)}
    = \sqrt{\triangle(2+\triangle)},
  \end{align*}
  which completes the proof of the lemma.
\end{proof}

The bound in Lemma~\ref{lemma:velcrit} implies, in
particular, that $\overline{\eta}(v)$ exists for every speed $v\geq\es +1$,
regardless of the actual value of $v_c$. This observation will be important
in the proof of Lemma~\ref{lemma:wave}.

\section{Proofs of Lemmas~\ref{lemma:wave_time} and \ref{lemma:wave}}
\label{sec:proof_prop_and_lemma}

In this section, we prove Lemmas~\ref{lemma:wave_time} and \ref{lemma:wave}.
We begin with Lemma~\ref{lemma:wave_time},  which addresses the growth of
$w^y(\cdot,0)=\mathtt{P}_0^\xi(M(\cdot ) \geq y)$ by providing a time
$u=u(\varepsilon)>0$ such that $w^y(t,0) \geq \varepsilon$ implies
$w^y(t+u,0)\geq 1-\varepsilon$.
In \cite{CDO25}, the corresponding statement, Corollary 3.6, was
   proved using rather heavy PDE arguments. In the discrete setting, we
   provide a simple probabilistic proof.

\begin{proof}[Proof of Lemma~\ref{lemma:wave_time}]
  By \eqref{eq:N(t,0)}, there exists $t_1=t_1(\varepsilon)>0$ such that
  \begin{equation}\label{eq1:t'}
    \mathtt{P}_0^\xi(N(t',0) \leq t')
    \leq \mathtt{P}_0^\ei(N(t',0) \leq t')
    \leq \varepsilon/2,
    \quad \text{ for all } t' \geq t_1, \ \PP\text{-a.s.}
  \end{equation}
  Moreover, for all $y \in \Z$,
  \begin{equation}\label{eq2:t'}
    \mathtt{P}_0^\xi(M(t+t') <y)
    \leq \mathtt{P}_0^\xi(M(t+t') <y \mid N(t',0) > t')
    + \mathtt{P}_0^\xi(N(t',0) \leq t'),
  \end{equation}
  where, by the Markov property of the BRWRE,
  \begin{equation}\label{eq3:t'}
    \mathtt{P}_0^\xi(M(t+t') <y \mid N(t',0) > t')
    \leq (\mathtt{P}_0^\xi(M(t) <y))^{t'} = (1-w^y(t,0))^{t'}.
  \end{equation}
  Since $w^y(t,0) \geq \varepsilon$, there exists $t_2=t_2(\varepsilon)>0$ such that
  \begin{equation}\label{eq4:t'}
    (1-w^y(t,0))^{t'} \leq(1-\varepsilon)^{t'}
    \leq \varepsilon/2, \quad \text{ for all } t' \geq t_2.
  \end{equation}
  By combining \eqref{eq1:t'}, \eqref{eq2:t'}, \eqref{eq3:t'} and
  \eqref{eq4:t'}, we conclude that, $\PP$-a.s.,
  \begin{equation*}
      w^y(t+t',0)=1-\mathtt{P}_0^\xi(M(t+t') <y) \geq 1- \varepsilon,
  \end{equation*}
  for all $t' \geq u\coloneq t_1 \vee t_2$.
\end{proof}

The proof of Lemma \ref{lemma:wave} closely follow the arguments in Sections
5 and 6 in \cite{CDO25}. To avoid reproducing their essentially identical
parts, we focus on the main differences and only explain the adaptations
required to carry over the arguments from the continuous-space case to the
discrete one.

As preparation for the proof of Lemma \ref{lemma:wave}, the first (and most important) step is to adapt the definitions of the auxiliary velocities in \cite{CDO25} (see (6.1) and (6.2) therein). Specifically, we let
  \begin{equation}\label{eq:def_v_1}
    v_1\coloneq \es +2, \quad v_2\coloneq\inf\{v > v_1+1: |\overline{\eta}(v)| \geq 2v_1+2\}.
  \end{equation}
  (Recall that $\overline{\eta}(v)$ was defined using \eqref{eq:veltilting}.) Note that, by \eqref{eq:velcrit}, we have $v_c < v_1$. Therefore, any speed
  satisfying $v\geq v_1$ admits some $\overline{\eta}(v)<0$ satisfying
  \eqref{eq:veltilting}. In particular, $v_2$ is well-defined and, since
  $v \mapsto \overline{\eta}(v)$ is a continuous decreasing function such
  that $\lim_{v \rightarrow \infty} \overline{\eta}(v)=-\infty$ (see Lemma 4.2 in \cite{CD20}), also finite.

  The definition of $v_2$ in \eqref{eq:def_v_1} is motivated by the following observation: For any
  $v > v_2$ one can choose $\eta<0$ such that
  \begin{equation}
    \label{eq:vel_1}
    2v_1<|\eta|<|\overline{\eta}(v)|-1,
  \end{equation}
  which, by  \eqref{eq:v_gamma}, implies in particular that $\eta$ satisfies
  \begin{equation}\label{eq:v_0_eta}
    v^{0,\eta}=\sqrt{\eta(\eta-2)}=\sqrt{|\eta|(2+|\eta|})>2v_1.
\end{equation}
(This inequality will be very useful later, see statement of Lemma \ref{lemma_3L<K} below.)

The second step in the preparation for the proof of the Lemma
\ref{lemma:wave} consists in observing that each of the statements in
Section~5 in \cite{CDO25} holds also
in the discrete setting, with obvious modifications.
In particular, the perturbation results of Proposition 5.1 in \cite{CDO25} remain valid in the discrete framework considered here. In fact, the arguments in Section 5 in \cite{CDO25} adapt the results in \cite{DS22}.
As these results are an extension of those obtained in \cite{CD20}, they remain applicable in our setting.

To explain how the proof of Lemma~\ref{lemma:wave} is structured, we briefly
recall the main steps in the proof of its continuous-space analog Lemma 6.1
in \cite{CDO25}. In \cite{CDO25}, the authors first make use of Proposition
5.1 to show that Lemma 6.1 follows from the (more technical) result Lemma
6.2. The latter is then proved via two auxiliary results, Lemmas 6.3 and 6.4.

We proceed in reverse order and first establish discrete-space analogs of
Lemmas 6.3 and 6.4 in \cite{CDO25}. By virtue of the results in Section 4.3
in \cite{CD20}, the proof of Lemma 6.4 in \cite{CDO25} carries over to the
discrete case without substantial modification. It therefore suffices to
address Lemma 6.3 in \cite{CDO25}.

To this end, we define, for $K \geq0$, $t\geq 0$ and $y \in \Z$, the
hitting time
\begin{equation*}
  \mathcal{T}_{y,t}\coloneq\inf\{s\geq 0: X_s \geq \ceil{\beta_{y,t}(s)} \},
  \quad \text{where } \beta_{y,t}(s)\coloneq y-v_1(t-s),
\end{equation*}
and the event $\mathcal{G}_K\coloneq \{\mathcal{T}_{y,t} \in [t-K,t]\}$ (cf.~(6.9) and (6.10) in \cite{CDO25}).

By the nature of the differences between the discrete-space setting considered here and that of \cite{CDO25}, both the statement and the proof of
Lemma \ref{lemma_3L<K} below differ slightly from those of its continuous-space analog Lemma 6.3 in \cite{CDO25}.
\begin{lemma}
  \label{lemma_3L<K}
  Let $\eta <0$ be such that $v^{0,\eta} > 2v_1$.
  Then there exists $K_0=K_0(\eta) \in (0,\infty)$ such that, $\PP$-a.s., for all $v> v_1$, $y \in \Z$, $K \geq K_0$, $t \geq K$ and $L \in (0,K/3]$,
  \begin{equation*}
    P^{\zeta, \eta}_{\floor{ y-vt }}(H_y \leq t, \
      \mathcal{T}_{y,t}\leq t-K)
    \leq 2P^{\zeta, \eta}_{\floor {y-vt}}(H_y <t-L).
  \end{equation*}
\end{lemma}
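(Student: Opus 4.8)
The plan is to fix a realization of the environment $\zeta$, condition on the event $\{\mathcal{T}_{y,t} \leq t - K\}$ together with the value of $\mathcal{T}_{y,t}$ and the position $X_{\mathcal{T}_{y,t}}$, and use the strong Markov property of $X$ under $P^{\zeta,\eta}_{\floor{y-vt}}$ (established in Proposition~\ref{prop:tilted_RW}) to reduce to a statement about a single tilted walk started on the barrier $\beta_{y,t}$ a long time before $t$. On the event $\mathcal{G}_K^c \cap \{H_y \leq t\}$, at time $r \coloneq \mathcal{T}_{y,t} \leq t-K$ the walk sits at a site $X_r \geq \ceil{\beta_{y,t}(r)} = \ceil{y - v_1(t-r)}$, hence at distance at most $v_1(t-r)+1$ to the left of $y$; it must then still reach $y$ by time $t$, i.e.\ within the remaining time $t-r \geq K$. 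The key point is that, under $P^{\zeta,\eta}$, the walk has positive drift (by \eqref{eq:ratio_q_x_p_x}), and because $v^{0,\eta} > 2v_1$ the \emph{lower bound} $P^{0,\eta}$ from Corollary~\ref{cor:coupling} already moves strictly faster than $2v_1$; so with overwhelming probability it travels at least distance $v_1(t-r) + 1$ (the gap to $y$) well before the additional time $t-r$ elapses—in fact it should complete this crossing before time $t-L$ for any $L \leq K/3$, once $K$ is large.

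The main steps, in order. First, decompose using the strong Markov property at the stopping time $\mathcal{T}_{y,t}$: on $\{\mathcal{T}_{y,t} = r\} \cap \{X_r = w\}$ with $r \leq t-K$ and $w \geq \ceil{\beta_{y,t}(r)}$, the conditional probability that $H_y < t - L$ equals $P^{\zeta,\eta}_w(H_y < t - L - r)$, and $\{H_y \leq t\}$ similarly factors. Second, reduce to showing a uniform bound of the form
\begin{equation*}
  P^{\zeta,\eta}_w(H_y \leq t - r) \leq 2\, P^{\zeta,\eta}_w(H_y < t - L - r)
\end{equation*}
for all admissible $w, r$; since $t - r \geq K$ and $L \leq K/3$, it suffices to show that the "overshoot" probability $P^{\zeta,\eta}_w(t-L-r \leq H_y \leq t-r)$ is at most $\tfrac12 P^{\zeta,\eta}_w(H_y < t-L-r)$. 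Third, bound the denominator from below: since $w \geq \ceil{y - v_1(t-r)}$, the distance $d \coloneq y - w \leq v_1(t-r)+1$, and under the coupling of Lemma~\ref{lemma:coupling} the fastest-among-the-three lower comparison walk $X^{0,\eta}$ has speed $v^{0,\eta} > 2v_1$, so a standard large-deviation (Chernoff) estimate for the continuous-time walk $X^{0,\eta}$ shows $P^{\zeta,\eta}_w(H_y \leq \tfrac{d}{2v_1}) \geq P^{0,\eta}_w(H_y \leq \tfrac{d}{2v_1}) \to 1$; since $\tfrac{d}{2v_1} \leq \tfrac{t-r}{2} + \tfrac{1}{2v_1} \leq (t-r) - L$ once $K$ (hence $t-r$) is large (using $L \leq K/3 \leq (t-r)/3$), this makes the denominator at least, say, $\tfrac34$. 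Fourth, bound the numerator (the overshoot): the event $\{H_y \geq t - L - r\}$ forces the walk to take time at least $t-L-r \geq (t-r) - K/3 \geq \tfrac{2}{3}(t-r)$ to cover distance $d \leq v_1(t-r)+1$, i.e.\ to have average speed below roughly $\tfrac32 v_1 < v^{0,\eta}$ over a time window of length $\geq \tfrac{2}{3}K$; again comparing downward to $X^{0,\eta}$ and applying a Chernoff bound gives $P^{\zeta,\eta}_w(H_y \geq t-L-r) \leq e^{-cK}$ for a constant $c = c(\eta) > 0$. Choosing $K_0 = K_0(\eta)$ so that $e^{-cK_0} \leq \tfrac38$ then yields numerator $\leq \tfrac38 \leq \tfrac12 \cdot \tfrac34 \leq \tfrac12 \cdot (\text{denominator})$, which gives the claimed factor of $2$ after recombining with the Markov decomposition.

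The main obstacle is organizing the time/space bookkeeping so that all the Chernoff estimates are genuinely uniform in $v > v_1$, $y \in \Z$, $t \geq K$ and $L \in (0, K/3]$, and in particular uniform in the conditioned values $r = \mathcal{T}_{y,t}$ and $X_r$: the point is that the bad event only constrains the walk's behaviour on the time interval $[\mathcal{T}_{y,t}, t]$ of length $\geq K$, over which it must traverse a distance $\leq v_1 \cdot(\text{that length}) + 1$, so the relevant "slow speed" is bounded away from $v^{0,\eta}$ by a margin depending only on $\eta$ (via $v^{0,\eta} > 2v_1$), not on $v$ or $t$. Once this is correctly isolated, the coupling of Lemma~\ref{lemma:coupling} reduces everything to the constant-drift walk $X^{0,\eta}$, whose hitting-time large deviations are explicit, so the remaining estimates are routine; I would carry them out via the exponential martingale $e^{\theta X_t - \psi(\theta) t}$ for $X^{0,\eta}$ with $\theta$ chosen once and for all as a function of $\eta$.
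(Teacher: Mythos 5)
Your proposal is correct and follows essentially the same route as the paper: a strong-Markov decomposition at $\mathcal{T}_{y,t}$ (which the paper delegates to the proof of Lemma 6.3 in \cite{CDO25}), reducing everything to the single hitting-time estimate that a walk started on the barrier at time $r\leq t-K$ reaches $y$ by time $t-L-r$ with probability at least a fixed constant, proved exactly as in the paper via Corollary~\ref{cor:coupling}, the bound $d\leq v_1(t-r)+1$ versus the available time $t-L-r\geq \tfrac23(t-r)$, and the law of large numbers for the constant-drift walk $X^{0,\eta}$ with speed $v^{0,\eta}>2v_1$. Your separate Chernoff bound on the ``overshoot'' numerator is superfluous: once the denominator $P^{\zeta,\eta}_w(H_y<t-L-r)$ is bounded below by $1/2$, the factor $2$ follows immediately since the numerator is at most $1$.
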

\begin{proof}
  By a straightforward adaptation of the proof of Lemma 6.3 in \cite{CDO25},
  it is sufficient to show that (cf.~(6.24) therein)
  \begin{equation}\label{eq:H_y}
    P^{\zeta, \eta}_{\ceil{y- v_1(t-u)}}(H_y \leq t-u-L) \geq 1/2, \quad \text{whenever } 0 \leq u \leq t-K.
  \end{equation}
  In \cite{CDO25}, \eqref{eq:H_y} is proved via a lower bound on the expectation of $X_{t-u-L}$ under the measure $P^{0, \eta}_{\ceil{y- v_1(t-u)}}$. Our setting requires a different approach, as \eqref{eq:H_y} does not follow directly from a lower bound of this form.

  By Corollary~\ref{cor:coupling} and translation invariance of  $X$ under $P^{0,\eta}_0$,
  \begin{equation}\label{eq1:t-u-L}
    P^{\zeta, \eta}_{\ceil{y- v_1(t-u)}}(H_y \leq t-u-L)
    \geq P^{0,\eta}_0(H_{\floor{ v_1(t-u)}} \leq t-u-L).
  \end{equation}
  Moreover, since $2L/K \leq 2/3$, $t-u \geq K$ and $K-L \geq 2K/3$,
  \begin{align*}
    \frac{5}{3}v_1(t-u-L) &\geq v_1(t-u-L)+ v_1\frac{2L}{K}(t-u-L)\\
    &\geq v_1(t-u) -v_1 L +v_1 \frac{2L}{K}(K-L) \\
    &\geq v_1(t-u) -v_1 L +v_1 \frac{4L}{3} \geq v_1(t-u).
  \end{align*}
  Hence,
  \begin{equation*}
    v^{0,\eta}(t-u-L) \geq 2v_1 (t-u-L) \geq v_1(t-u) + \frac{v_1}{3}(t-u-L),
  \end{equation*}
  so that
  \begin{equation}\label{eq2:t-u-L}
    t-u-L \geq \frac{v_1(t-u)}{v^{0,\eta}-v_1/3}.
  \end{equation}
  Since, by \eqref{eq:def_v_gamma}, $X$ has speed $v^{0,\eta}$ under the measure $P_0^{0,\eta}$, the law
  of large numbers for the continuous-time random walk gives, by
  \eqref{eq2:t-u-L},
  \begin{equation*}
    P^{0,\eta}_0\left(H_{\floor{ v_1(t-u)}} \leq t-u-L \right)
    \geq P^{0,\eta}_0\left(H_{\floor{ v_1(t-u)}} \leq
      \frac{v_1(t-u)}{v^{0,\eta}-v_1/3} \right) \geq 1/2
  \end{equation*}
  whenever $t-u \geq K \geq K_0$ and $K_0=K_0(\eta)$ is sufficiently large. By
  \eqref{eq1:t-u-L}, this gives the desired relation \eqref{eq:H_y}, and completes the proof.
\end{proof}
We can now state and prove a discrete analog of Lemma 6.2 in \cite{CDO25}.
\begin{lemma}\label{lemma:good_event}
    For every $v>v_2$ there exist constants $K=K(v)\in(0,\infty)$ and $C=C(v) \in (0,\infty)$ such that, $\PP$-a.s.,
    \begin{equation*}
        E_{\floor{y-vt}}\big[e^{\int_0^t \xi(X_s)ds}; X_t \geq y\big] \leq CE_{\floor{y-vt}}\big[e^{\int_0^t \xi(X_s)ds}; X_t \geq y, \mathcal{G}_K\big]
    \end{equation*}
    for all $t \geq 0$ large enough and all $y \in \{0, \dots,\ceil{vt}\}$.
\end{lemma}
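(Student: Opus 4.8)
The plan is to follow the structure of the proof of Lemma~6.2 in \cite{CDO25}, using Lemma~\ref{lemma_3L<K} and the discrete analog of Lemma~6.4 in \cite{CDO25} as the two main inputs, and exploiting the freedom in the choice of the tilting parameter $\eta$ guaranteed by the definition of $v_2$ in \eqref{eq:def_v_1}. First I would fix $v>v_2$ and choose $\eta<0$ satisfying \eqref{eq:vel_1}, so that in particular $v^{0,\eta}>2v_1$ by \eqref{eq:v_0_eta} and, since $|\eta|<|\overline{\eta}(v)|-1$, the tilted random walk under $P^{\zeta,\eta}$ travels (on average) strictly slower than $v$; this is the mechanism that forces the walk started at $\floor{y-vt}$ to hit $y$ near time $t$ rather than much earlier. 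Next I would pass from the Feynman--Kac expectation $E_{\floor{y-vt}}[e^{\int_0^t\xi(X_s)ds};X_t\geq y]$ to the tilted measure $P^{\zeta,\eta}_{\floor{y-vt}}$: writing $\xi=\zeta+\es$ and inserting $\pm\eta$, one has $\int_0^t\xi(X_s)\,ds = \int_0^{H_y}(\zeta(X_s)+\eta)\,ds + \int_{H_y}^t \xi(X_s)\,ds + (\es-\eta)t - \eta(\text{something})$, so that up to the deterministic factor $e^{(\es-\eta)t}$ and the bounded post-$H_y$ contribution, the expectation is comparable to $Z^{\zeta,\eta}_{\floor{y-vt},y}\,\mathtt{E}^{\cdots}[\,\cdots;X_t\geq y\,]$ under $P^{\zeta,\eta}_{\floor{y-vt}}$, exactly as in \cite{CDO25}. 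The event $\{X_t\geq y\}$ then essentially becomes $\{H_y\leq t\}$ (this is where one uses that, under the tilted measure, the walk has positive drift, cf.~\eqref{eq:ratio_q_x_p_x}, together with the analog of Lemma~6.4 in \cite{CDO25} to control the excursion of $X$ above the line $\beta_{y,t}$ after time $\mathcal{T}_{y,t}$).

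Having reduced matters to a statement about $P^{\zeta,\eta}_{\floor{y-vt}}$, the claim becomes: there exist $K,C$ such that
\[
  P^{\zeta,\eta}_{\floor{y-vt}}(H_y\leq t)\leq C\,P^{\zeta,\eta}_{\floor{y-vt}}(H_y\leq t,\ \mathcal{G}_K),
\]
i.e.\ that conditionally on hitting $y$ by time $t$, the walk crosses the barrier $\beta_{y,t}$ within the last $K$ units of time with probability bounded below. The complementary event $\{H_y\leq t,\ \mathcal{T}_{y,t}\leq t-K\}$ is precisely the event controlled by Lemma~\ref{lemma_3L<K}, which (for $K\geq K_0(\eta)$, $t\geq K$, $L\in(0,K/3]$) bounds $P^{\zeta,\eta}_{\floor{y-vt}}(H_y\leq t,\ \mathcal{T}_{y,t}\leq t-K)$ by $2P^{\zeta,\eta}_{\floor{y-vt}}(H_y<t-L)$. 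Thus it suffices to show that $P^{\zeta,\eta}_{\floor{y-vt}}(H_y<t-L)$ is a small fraction of $P^{\zeta,\eta}_{\floor{y-vt}}(H_y\leq t)$ once $L$ (hence $K$) is chosen large enough depending on $v$: intuitively, since the tilted walk has mean speed $< v$, reaching $y$ from $\floor{y-vt}$ requires a moderate deviation, and reaching it a further time $L$ early costs an additional exponential factor $e^{-cL}$; since the unconditioned probability $P^{\zeta,\eta}_{\floor{y-vt}}(H_y\leq t)$ is itself only exponentially small at a strictly smaller rate (this is where $|\eta|<|\overline{\eta}(v)|-1$ enters, giving the strict gap between the two rates via the large-deviation rate function for hitting times, as in Lemma~6.4 of \cite{CDO25}), the ratio $P^{\zeta,\eta}_{\floor{y-vt}}(H_y<t-L)/P^{\zeta,\eta}_{\floor{y-vt}}(H_y\leq t)$ can be made $<1/4$ uniformly in $y\in\{0,\dots,\ceil{vt}\}$ and $t$ large by taking $L$ large. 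Combining this with Lemma~\ref{lemma_3L<K} gives $P^{\zeta,\eta}_{\floor{y-vt}}(H_y\leq t,\ \mathcal{T}_{y,t}\leq t-K)\leq \tfrac12 P^{\zeta,\eta}_{\floor{y-vt}}(H_y\leq t)$, hence $P^{\zeta,\eta}_{\floor{y-vt}}(H_y\leq t,\ \mathcal{G}_K)\geq\tfrac12 P^{\zeta,\eta}_{\floor{y-vt}}(H_y\leq t)$, i.e.\ the claimed inequality with $C=2$; undoing the change of measure (the deterministic and post-hitting factors cancel on both sides since $\mathcal{G}_K\subset\{X_t\geq y\}$ up to the Lemma~6.4 correction) yields the statement.

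The main obstacle I expect is \emph{uniformity in $y$ and $t$} of the large-deviation comparison between $P^{\zeta,\eta}_{\floor{y-vt}}(H_y<t-L)$ and $P^{\zeta,\eta}_{\floor{y-vt}}(H_y\leq t)$. Because $\zeta$ depends on the environment, $H_y$ under $P^{\zeta,\eta}$ is a sum of non-i.i.d.\ hitting times $H_{x+1}-H_x$, so one cannot simply invoke Cramér's theorem; instead one needs the quenched/annealed moment-generating-function estimates for $\sum (H_{k+1}-H_k)$ developed in Section~4.3 of \cite{CD20} (and the perturbation results of Proposition~5.1 of \cite{CDO25}, valid here by the discussion preceding Lemma~\ref{lemma_3L<K}), which provide, $\PP$-a.s.\ and uniformly in the relevant range of endpoints, the two-sided exponential rates needed to separate the two probabilities — precisely the point where $v_2$ was engineered so that $\overline\eta(v)$ exists with the required slack. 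Once these uniform rate bounds are in hand the rest is a routine combination of Lemma~\ref{lemma_3L<K}, the change of measure, and the choice $L\sim L(v)$, $K=3L$.
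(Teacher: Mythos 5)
Your proposal is correct and follows essentially the same route as the paper: both reduce the statement, via the tilted measure $P^{\zeta,\eta}$ with $\eta$ chosen as in \eqref{eq:vel_1}, to the comparison $P^{\zeta,\eta}_{\floor{y-vt}}(H_y<t-L)\ll P^{\zeta,\eta}_{\floor{y-vt}}(H_y\le t)$ handled by the discrete analog of Lemma~6.4 in \cite{CDO25}, combined with Lemma~\ref{lemma_3L<K}, exactly as in the proof of Lemma~6.2 there. The only point the paper makes explicit that you gloss over (and where your parenthetical claim $\mathcal{G}_K\subset\{X_t\ge y\}$ is not literally true) is the uniform lower bound $P_y^{\zeta,\eta}(X_s\ge y)\ge\delta$ for $s\le K$, needed to convert the hitting-time event back into $\{X_t\ge y,\mathcal{G}_K\}$; this follows immediately from the boundedness of the jump rates.
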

\begin{proof}
By a straightforward adaptation of the proof of Lemma 6.2 in \cite{CDO25}, the desired result follows once we establish, for suitably chosen $\eta \leq 0$, $ L \geq 0$, $K \geq0$ and $\delta=\delta(K,\eta)>0$, $C=C(\eta, K)< \infty$, that, $\PP$-a.s.,
\begin{equation}\label{eq:p_y(s)}
    p_y^{\zeta, \eta}(s)\coloneq P_y^{\zeta,\eta}(X_s \geq y) \geq \delta, \quad \text{for all }y \in \Z \text{ and } s \leq K,
\end{equation}
and
\begin{equation}\label{eq:Lemma_6.2}
    P^{\zeta, \eta}_{\floor{y-vt}}(H_y \in [t-L,t]) \leq C P^{\zeta, \eta}_{\floor{y-vt}}(H_y \in [t-K,t], \mathcal{T}_{y,t} \geq t-K), \quad \PP \text{-a.s.},
\end{equation}
for all $t$ large enough and  $y \in \{0,\dots, \ceil{vt} \}$ (cf.~(6.17) and (6.18) in \cite{CDO25}).

However, \eqref{eq:p_y(s)} follows directly from \eqref{eq:lambda}, as
  \begin{equation*}
     P_y^{\zeta,\eta}(X_s \geq y)
    \geq P_y^{\zeta,\eta}(X_r=y, \ \forall \ r \leq K )
    \geq e^{-(1+\triangle + |\eta| )K} \eqcolon \delta>0,
  \end{equation*}
   for all $y \in \Z$  and $s \leq K$. Moreover, by \eqref{eq:vel_1} and $\eqref{eq:v_0_eta}$, we can choose $\eta,L$ and $K$ so that both Lemma \ref{lemma_3L<K} and Lemma 6.4 in \cite{CDO25} (which, we recall, extend to the discrete setting) are applicable. By the exact same arguments as in the proof of Lemma 6.2 in \cite{CDO25}, this gives the desired inequality \eqref{eq:Lemma_6.2}.
\end{proof}

\begin{proof}[Proof of Lemma~\ref{lemma:wave}]
In \cite{CDO25}, Lemma 6.1 is established via two intermediate inequalities, see (6.5) and (6.14) therein. The arguments leading to the first one extend directly to the discrete setting, since (as explained before) Proposition 5.1 in \cite{CDO25} continues to hold in our framework.
On the other hand, the arguments leading to the second intermediate inequality require a small adaptation, as it is a priori not clear whether, in our setting,
\begin{equation}\label{eq1:G_K}
    \int_{0}^{t-K} e^{\es(t-s)}P_{X_s}(X_{t-s} \geq y)ds \leq 1, \quad  \text{for all } K \geq 1, \text{ on the event } \mathcal{G}_K.
\end{equation}
(cf.~(6.11) in \cite{CDO25} and the display following that equation.)
To solve this issue, we first notice that, on the event $\mathcal{G}_K$,
\begin{equation*}
    X_s \leq \floor{y-v_1(t-s)}= y- \ceil{v_1(t-s)}, \quad \text{ for all } s \in [0, t-K).
\end{equation*}
In particular, on the event $\mathcal{G}_K$, every $s \in [0, t-K)$ satisfies
\begin{equation}\label{eq2:G_K}
     P_{X_s}(X_{t-s} \geq y) \leq P_{y- \ceil{v_1(t-s)}}(X_{t-s} \geq y) \leq  P_{0}(X_{t-s} \geq v_1(t-s)).
\end{equation}
Moreover, by \eqref{eq:Chernoff_X_t} and \eqref{eq:def_v_1},
\begin{equation}
  \label{eq3:G_K}
  \begin{aligned}
    \int_0^{t-K}e^{\es(t-s)}P_0(X_{t-s} \geq v_1 (t-s)) ds &
  \leq \int_0^{t-K}e^{\es(t-s)} e^{-(t-s)(v_1-1)}ds
  \\ &\leq \int_K^t e^{-(v_1-1-\es)s}ds \leq e^{-K}.
  \end{aligned}
\end{equation}
By combining \eqref{eq2:G_K} and \eqref{eq3:G_K}, we obtain \eqref{eq1:G_K}.

Using \eqref{eq1:G_K}, we can replicate the arguments in \cite{CDO25} and deduce that (6.14) therein extend to our setting.
With a discrete counterpart to equation (6.14) and Lemma~\ref{lemma:good_event} at our disposal, the last part of the proof of Lemma 6.1 in \cite{CDO25} carries over verbatim to our framework, which concludes the proof.
\end{proof}

\section{Proof of Theorem~\ref{Thm:zero-crossings}}
\label{sec:proof_zero_crossings}

We now prove Theorem~\ref{Thm:zero-crossings}. The proof consists of three parts and, as anticipated in the introduction, follows the approach proposed by Evans and Williams in \cite{EW99} for the real-line setting. First, we prepare for the proof by reducing the statement to a simpler
form. Specifically, we express the solution to
  \eqref{eq:main_2} in terms of a killed random walk by recalling its
  Feynman–Kac representation. Then, we construct a sequence of processes
converging, in a suitable sense, to $t \mapsto u(t,\cdot)$. Finally, by
exploiting the properties of these processes, we conclude the proof of the
theorem.

\subsection{Killed random walk and the Feynman--Kac formula}
\label{sec:KRW}

To prepare for the proof of Theorem~\ref{Thm:zero-crossings}, we begin by
showing that the solution to \eqref{eq:main_2} admits a representation in
terms of a killed random walk. To do so, note first that if
$\norm{\kappa}_\infty < C$ and $u$ solves \eqref{eq:main_2}, then
$\tilde{u}(t,y)=u(t,y)e^{-2Ct}$ is a solution to \eqref{eq:main_2} with
$\kappa+2C \in (C,3C)$ in place of $\kappa$. Because $u$ and $\tilde{u}$
share the same initial condition, zeros and sign properties, we may, without
loss of generality, assume from now on that $\kappa$ is bounded above and
below by positive constants.

We now consider a one-dimensional continuous-time simple random walk with
jump rate one, which is killed with rate $\kappa(t,y)>0$ when located at
position $y \in \Z$ at time $t \geq 0$. This process can be constructed as
follows. Let, as previously, $X$ be a one-dimensional continuous-time simple
random walk with rate one, started at $x \in \Z$ under $P_x$. We denote by
$\dagger$ the point at infinity in the one-point compactification $\OZ$ of
$\Z$. The killed random walk (KRW) is the $\OZ$-valued process
$(\tilde{X}_t)_{t \geq 0}$ defined by
\begin{equation*}
  \tilde{X}_t =
  \begin{cases}
    X_t, & t < \zeta \\
    \dagger, & t \geq \zeta
  \end{cases}
  \quad
  \text{with} \quad
  P_x(\zeta > t)
  = E_x\left[\exp\left(\ - \int_0^t \kappa(s,X_s) ds \right)\right].
\end{equation*}
As the killing rate is bounded away from zero, the life-time $\zeta$ of
$\tilde{X}$ is finite a.s.

To write the unique solution to \eqref{eq:main_2} it terms of the KRW, we use
the Feynman--Kac formula for time-dependent potentials (see Section II.1 in
  \cite{CM94}). Since $u_0$ is summable, the formula applies even if the
initial condition is not non-negative, so that for all $t \geq 0$ and $y \in \Z$
\begin{align*}
  u(t,y) &
  = \ E_y\left[ \exp\left(-\int_0^t \kappa(t-s,X_s)ds\right) u_0(X_t)\right]
  \\ & =  \sum_{x \in \Z} u_0(x)\ E_x\left[ \exp\left(-\int_0^t
      \kappa(s,X_s)ds\right); X_t=y\right]
  =  \sum_{x \in \Z} u_0(x)\ P_{x}(\tilde{X}_t=y).
\end{align*}
At this point, one could deduce
  Theorem~\ref{Thm:zero-crossings} from the previous display by combining the
  Karlin-McGregor determinant formula of coincidence probabilities in
  \cite{Kar88} with well-known variation diminishing properties of strictly
  totally positive matrices (see, for example, Chapter 3 in \cite{Pin10}).
  Nevertheless, we believe that the arguments we present in the following
  might be of independent interest, as they do not involve determinants, but
  rather a purely probabilistic study of particle systems.

We conclude this section with a few reductions of the statement of the
theorem. First, by rescaling $u_0 \in \ell^1(\Z)$ by its $\ell^1$-norm, we
may assume from now on that $|u_0|$ defines a probability measure on $\Z$.
Second, we claim that it is enough to proof the monotonicity of the number of
zero-crossings in the case $s=0$. Indeed, by the above representation of $u$
and the Markov property,
\begin{equation*}
  u(t, \cdot )=\sum_{x \in \Z} u(s,x)P_0(\tilde{X}_{t} = \cdot \  | \tilde{X}_s =x),
\end{equation*}
for each $0 \leq s \leq t$. But $u(s,\cdot) \in \ell^1(\Z)$, and
$(\Tilde{X}_{s+t})_{t \geq 0}$  conditional on $\tilde{X}_s=x$ is again a
KRW, now started at $x$.

\subsection{Annihilating particles}
\label{sec:annihilatingparticles}

The first part of this section provides the main tool for the proof of
Theorem~\ref{Thm:zero-crossings}, namely a sequence of measure-valued
processes that converge, in a suitable sense, to the solution of
\eqref{eq:main_2}.

In the following, we let $\mathcal{M}$, $\overline{\mathcal{M}}$ be,
respectively, the spaces of finite signed measures on $\Z$, $\OZ$, and
$\mathcal{N} \subset \mathcal{M}$,
$\overline{\mathcal{N}} \subset \overline{\mathcal{M}}$ the corresponding
subspaces containing the integer-valued ones. All spaces are equipped with
the weak topology.

For every $\nu \in \mathcal{N}$, there exists a unique choice of integers
$x_1, \dots,x_n \in \Z$ and signs
$\varepsilon_1,\dots, \varepsilon_n \in \{-1,1\}$ such that
\begin{equation*}
  \nu=\sum_{i=1}^n \varepsilon_i \delta_{x_i}, \ x_1\leq \ \dots
  \leq x_n \text{ and }  x_i < x_{i+1} \text{ whenever } \varepsilon_i
  \neq \varepsilon_{i+1}.
\end{equation*}
We denote this unique sequence of signs with
$\mathcal{S}(\nu) \coloneq (\varepsilon_1, \ldots, \varepsilon_n)$. For
$\Tilde{\nu} \in \overline{\mathcal{N}}$ we define
$\mathcal{S}(\Tilde{\nu})=\mathcal{S}(\Tilde{\nu}|_\Z)$.

Our first goal is to construct, for every $\nu \in \mathcal{N}$, a
$\overline{\mathcal{N}} \times \overline{\mathcal{N}}$-valued process $(Y,Z)$
that describes the behavior of $n$ interacting signed particles on the
integer line. More specifically, $Z$ records the positions of the
particles---which we start at $x_1,\dots,x_n$---when considered as unsigned
and non-interacting, while $Y$ incorporates the sign information and the
corresponding interaction structure. Later, the number of particles will be
sent to infinity.

The explicit construction of the process goes as follows. For a given
$\nu \in \mathcal{N}$, let $\tilde{X}^1,\dots,\tilde{X}^n$ be independent
copies of the KRW from the previous section, starting at $x_1,\dots,x_n$. We
define $Z$ via $Z_t \coloneq \sum_{i=1}^n \delta_{\tilde{X}_t^i}$. The process $Y$
is constructed from $Z$ by first assigning to each particle the sign
$\varepsilon_i$, so that
$Y_0=\sum_{i=1}^n \varepsilon_i \delta_{\tilde{X}_0^i}$, and then
annihilating any two particles with opposite sign upon meeting at any site
other than the cemetery $\dagger$. As annihilating two such signed particles
has the same effect as freezing them, the process can be written as
$Y_t=\sum_{i=1}^n \varepsilon_i \delta_{\tilde{X}_{t \wedge \tau_i}^i}$ for
suitable stopping times $\tau_1,\dots,\tau_n \in (0,\infty]$.

We now explain how these stopping times are constructed. The annihilation
time $\tau_i$ of the $i$-th particle is defined as the first time it
encounters another particle, not yet annihilated, with opposite sign. If an
alive particle $\delta_{\tilde{X}^i}$ (not yet annihilated nor sent to the
  cemetery) with sign $\varepsilon_i$ jumps to a site $x \in \Z$ where
possibly multiple (not yet annihilated) particles, say
$\delta_{\tilde{X}^{j_1}},\dots,\delta_{\tilde{X}^{j_m}}$, having sign
$-\varepsilon_i$ are located, then only one of them (say the one with the
  smallest index) will annihilate with $\delta_{\tilde{X}^i}$, which, for the
corresponding stopping times, means $\tau_i=\tau_{j_1} < \tau_{j_k}$ for
$2 \leq k \leq m$. The independence of the random walks implies that their
jumps---and hence the annihilations---almost surely occur at distinct times.
Consequently, if $\tau_i = \tau_j  = \tau_k $ and $i \neq j \neq k$, then
$\tau_i=\tau_j  = \tau_k=\infty$ almost surely, which is possible as no
annihilation occurs at the cemetery.

Our next focus is the study of the zero-crossings of the constructed process.
To this end, we extend the notion of zero-crossing to sign sequences and
finite signed measures on $\Z$ by identifying them with sequences in $\ell^1(\Z)$.
The definition can be extended to any finite signed measure $\mu$ on $\OZ$
via $\Sigma(\mu) \coloneq \Sigma(\mu|_{\mathbb{Z}})$.

By considering the process $(\mathcal{S}(Y_t))_{t \geq 0}$, which  records
the signs of the particles together with their ordering, we notice that
$\Sigma(\mu) = \Sigma(\mathcal{S}(\mu))$ for any
$\mu \in \overline{\mathcal{N}}$. In particular, it is sufficient to describe
the time evolution of the zero-crossings of $\mathcal{S}(Y_t)$. To do so, we
introduce the notion of a substring. For
$\mu, \tilde{\mu} \in \overline{\mathcal{N}}$, we say that $\mathcal{S}(\mu)$
is a substring of the sign sequence $\mathcal{S}(\tilde{\mu})$, denoted
$\mathcal{S}(\mu) \preceq \mathcal{S}(\tilde{\mu})$, if $\mathcal{S}(\mu)$ is
either identical to $\mathcal{S}(\tilde{\mu})$ or can be obtained from it by
removing finitely many signs.

The following result, which will be used in the next section, serves as our
counterpart to part $(iv)$ of Lemma 3.1 in \cite{EW99}. The
  proof can be adapted to our framework because particles move via
nearest-neighbor jumps, a property that plays the role of path continuity in
the original argument.

\begin{lemma}
  \label{lemma:zero-crossings}
  Almost surely, for each $s \leq t$
  \begin{equation*}
    \mathcal{S}(Y_t) \text{ is a substring of } \mathcal{S}(Y_s)
    \quad \text{ and } \quad
    \Sigma(Y_t) \leq \Sigma(Y_s) \leq \Sigma(\nu).
  \end{equation*}
\end{lemma}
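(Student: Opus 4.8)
The plan is to track the sign sequence $\mathcal S(Y_t)$ across the (discrete, almost surely distinct) jump and annihilation times and to check that each elementary event — a single particle jumping by one step, possibly triggering an annihilation — can only delete signs from $\mathcal S(Y_{t^-})$ or leave it unchanged, never insert a new sign or create a new alternation. Since the KRWs $\tilde X^1,\dots,\tilde X^n$ make only nearest-neighbour jumps and, being independent continuous-time walks, almost surely never jump simultaneously, the process $t\mapsto \mathcal S(Y_t)$ is piecewise constant with countably many jump times, so it suffices to analyse a single transition time $\sigma$ and argue $\mathcal S(Y_\sigma)\preceq\mathcal S(Y_{\sigma^-})$; transitivity of $\preceq$ and induction over the (locally finitely many) transitions up to time $t$ then give $\mathcal S(Y_t)\preceq\mathcal S(Y_s)$ for all $s\le t$, and the inequality $\Sigma(Y_t)\le\Sigma(Y_s)$ follows because $\Sigma(\mu)=\Sigma(\mathcal S(\mu))$ and removing entries from a sign string cannot increase the number of sign changes. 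The final bound $\Sigma(Y_s)\le\Sigma(\nu)$ is just the case $s=0$, since $\mathcal S(Y_0)=\mathcal S(\nu)$.

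**The single-transition analysis.** At a transition time $\sigma$ exactly one alive particle, say $\delta_{\tilde X^i}$ with sign $\varepsilon_i$, jumps from some site $x$ to a neighbouring site $x\pm1$ (or to $\dagger$, in which case it simply disappears and $\mathcal S(Y_\sigma)$ is obtained from $\mathcal S(Y_{\sigma^-})$ by deleting one entry — trivially a substring). Suppose it jumps within $\Z$, to $y:=x\pm1$. Two cases: either the walk $\tilde X^i$ annihilates with an opposite-sign particle already at $y$, or it does not. If it does, then by construction both $\delta_{\tilde X^i}$ and one partner $\delta_{\tilde X^{j}}$ of sign $-\varepsilon_i$ are simultaneously removed, so $\mathcal S(Y_\sigma)$ is obtained from $\mathcal S(Y_{\sigma^-})$ by deleting two entries; again a substring. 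If it does not annihilate, then every particle at $y$ just before the jump has sign $\varepsilon_i$ (otherwise an annihilation would occur), and the only change is that one $\varepsilon_i$-particle moves from $x$ to the adjacent site $y$. Because the jump is nearest-neighbour, between positions $x$ and $y$ there is no room for a particle of either sign, so in the ordered sign string the moved $\varepsilon_i$ can only land among other $\varepsilon_i$'s (those that were at $x$, or those at $y$): the multiset of signs is unchanged and the block structure (maximal runs of equal signs) is unchanged except possibly for the sizes of two adjacent runs of the same sign value at $x$ and $y$ merging or shifting by one. In all sub-cases $\mathcal S(Y_\sigma)$ equals $\mathcal S(Y_{\sigma^-})$ as a sign sequence, hence is (trivially) a substring.

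**Assembling the statement.** Combining the above, at every transition time $\mathcal S(Y_\sigma)\preceq\mathcal S(Y_{\sigma^-})$. Since on any compact time interval there are only finitely many particles and each performs a locally finite number of jumps before being killed or annihilated, the transition times up to $t$ are finite in number; chaining $\preceq$ through them (and using that $\mathcal S(Y_\cdot)$ is constant between transitions) yields $\mathcal S(Y_t)\preceq\mathcal S(Y_s)$ for $0\le s\le t$, almost surely and simultaneously for all such $s,t$ by taking a countable dense set of times and right-continuity. Then $\Sigma(Y_t)=\Sigma(\mathcal S(Y_t))\le\Sigma(\mathcal S(Y_s))=\Sigma(Y_s)$, because deleting entries from a $\pm1$-string never creates a new sign alternation, and $\Sigma(Y_s)\le\Sigma(\mathcal S(Y_0))=\Sigma(\nu)$ by the same monotonicity applied down to time $0$.

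**Main obstacle.** The delicate point is the non-annihilating jump: one must rule out that moving an $\varepsilon_i$-particle one step changes $\mathcal S$ by \emph{inserting} a new sign or splitting a run in a way that manufactures a fresh alternation. This is exactly where the nearest-neighbour structure is essential — there is no site strictly between $x$ and $x\pm1$, so the moved particle cannot ``jump over'' a particle of the opposite sign and wedge itself into the middle of an alternating pattern; this is the discrete surrogate for the path-continuity argument of \cite{EW99}. A secondary technical care is the bookkeeping of simultaneous annihilations: the construction of the $\tau_i$ (smallest-index partner annihilates, triple collisions sent to $\infty$) guarantees that at each transition at most two indices change status at finite time, which keeps the case analysis finite; one should state this once and invoke it.
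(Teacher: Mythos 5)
Your proof is correct and follows essentially the same route as the paper's: both reduce to a single-transition analysis (cemetery jump removes one sign, annihilation removes two, a non-annihilating nearest-neighbour move leaves $\mathcal{S}(Y)$ unchanged because the particle cannot jump over an opposite-sign particle), then chain via transitivity of $\preceq$ and the fact that deleting entries cannot increase the number of sign changes. Your extra care on the non-annihilating case is exactly the point the paper compresses into the remark that $\mathcal{S}(Y_t)$ cannot arise from $\mathcal{S}(Y_s)$ by permuting two signs.
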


\begin{proof}
  Almost surely, each particle has a finite life-time and therefore jumps
  finitely many times. Therefore, there are finitely many collisions, and the
  right-continuous $\overline{\mathcal{N}}$-valued process $(Y_t)_{t \geq 0}$
  is almost surely piecewise constant with finitely many jumps. The same
  holds for the process $(\mathcal{S}(Y_t))_{t \geq 0}$. Moreover, if $s$ and
  $t$ are such that the process has exactly one jump in the time interval
  $(s,t]$, then $\mathcal{S}(Y_t)$ is  obtained from $\mathcal{S}(Y_s)$ by
  the removal of either one sign, in the case where a particle is sent to the
  cemetery, or two consecutive signs, in the case of a particle pair
  undergoing annihilation. In fact, since particles do not jump
  simultaneously, $\mathcal{S}(Y_t)$ cannot be obtained by permuting two
  signs in $\mathcal{S}(Y_s)$. In particular, one has
  $\mathcal{S}(Y_t) \preceq \mathcal{S}(Y_s)$. By transitivity, we conclude
  that $\mathcal{S}(Y_t) \preceq \mathcal{S}(Y_s)$ for all $0 \leq s \leq t$.
  The second part of the lemma is a direct consequence of the first one.
\end{proof}

We now construct a sequence of $\overline{\mathcal{N}}$-valued processes
$(Y^n)_{n \in \N}$, each representing the dynamics of $n$ particles evolving
under the interaction and annihilation mechanism of the process $Y$ above. We
add an additional layer of randomness by choosing random initial
configurations $(Y_0^n)_{n \in \N}$.

Let $(\mathcal{X}_i)_{i \in \N}$ be an i.i.d.~sequence of $|u_0|$-distributed
starting positions, with corresponding signs
$\mathcal{E}_i\coloneq\sgn(u_0(\mathcal{X}_i))$. We recall that we assume  $|u_0|$
is a probability measure. Then, defining
\begin{equation*}
  Y_0^n\coloneq \sum_{i=0}^n \mathcal{E}_i \delta_{\mathcal{X}_i}
\end{equation*}
yields, by repeating the previous construction with $Y_0^n$ in place of
$\nu$, a sequence of $\overline{\mathcal{N}}$-valued processes
$(Y^n)_{n \in \N}$.

Once normalized, these processes converge (in a suitable sense) to the
deterministic function $t \in [0,\infty) \rightarrow u(t, \cdot)$, which can
be extended to $\OZ$ by setting
$u(t,\dagger)\coloneq\sum_{x \in \Z} u_0(x) P_x(\tilde{X}_t = \dagger)$ and thus
can be viewed as an element of $ \overline{\mathcal{M}}$.

\begin{lemma}
  \label{lemma:TildeYn}
  The sequence $(\frac{1}{n}Y^n)_{n \in \N}$ of càdlàg
  $\overline{\mathcal{M}}$-valued processes converges in probability in the
  Skorokhod topology to the continuous deterministic function
  $t \mapsto u(t,\cdot) \in \overline{\mathcal{M}}$.
\end{lemma}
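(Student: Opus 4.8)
The plan is to prove convergence in two stages: first identify the limit as the deterministic curve $t\mapsto u(t,\cdot)$ by a law-of-large-numbers argument at each fixed time, and then upgrade this to convergence in probability in the Skorokhod topology by controlling the jumps of the processes $\frac1n Y^n$. For the first stage, fix $t\geq0$ and a bounded function $g$ on $\OZ$. By construction $Y^n_t = \sum_{i=0}^n \mathcal{E}_i \delta_{\tilde X^i_{t\wedge\tau_i}}$, and I would compare this to the non-interacting, non-annihilating version $Z^n_t = \sum_{i=0}^n \mathcal{E}_i \delta_{\tilde X^i_t}$. The key observation is that annihilation never changes the \emph{signed} measure $Y^n_t$ evaluated against $g$ when $g$ is \emph{constant on $\Z$ minus the cemetery} — more generally, the difference between $\langle g, Y^n_t\rangle$ and $\langle g, Z^n_t\rangle$ is controlled by pairs of particles that have met: each annihilated pair $\{i,j\}$ with opposite signs contributes $\mathcal{E}_i(g(\tilde X^i_{t\wedge\tau_i}) - g(\tilde X^i_t)) + \mathcal{E}_j(g(\tilde X^j_{t\wedge\tau_j}) - g(\tilde X^j_t))$, and since after annihilation each frozen particle stays where its partner was, these two terms partially cancel; what remains is bounded by $2\|g\|_\infty$ times the number of annihilated pairs. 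By exchangeability, $\E[\#\text{annihilated pairs}]/n \to 0$ is what I need, which follows because for two independent KRWs started from independent $|u_0|$-distributed points the probability of ever meeting can be made small by the summability of $u_0$ (the pair of starting points is ``spread out'' with high probability). Meanwhile $\frac1n \langle g, Z^n_t\rangle = \frac1n\sum_{i=0}^n \mathcal{E}_i g(\tilde X^i_t) \to \E[\mathcal{E}_0 g(\tilde X^0_t)] = \sum_{x}u_0(x) E_x[g(\tilde X_t)] = \langle g, u(t,\cdot)\rangle$ by the strong law of large numbers for i.i.d.\ summands.

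Having established finite-dimensional convergence in probability, I would promote it to Skorokhod convergence. The standard route is to show that the sequence $(\frac1n Y^n)_n$ is $C$-tight (tight, with all limit points having continuous paths), so that together with the already-identified finite-dimensional limits it must converge in probability to the continuous deterministic curve $t\mapsto u(t,\cdot)$. Since $\overline{\mathcal M}$ with the weak topology is metrizable, tightness in the Skorokhod space reduces, via Aldous's criterion or a modulus-of-continuity estimate, to controlling $\sup_{|s-t|\le\delta}\,|\langle g,\tfrac1n(Y^n_t-Y^n_s)\rangle|$ for a countable convergence-determining family of $g$. Again comparing with the non-annihilating $Z^n$: increments of $\frac1n\langle g, Z^n\rangle$ over a short interval are small because each $\tilde X^i$ is a rate-one random walk (killed), so $\langle g, Z^n_t-Z^n_s\rangle$ involves only those $O(n\delta)$ particles that jumped or were killed in $(s,t]$, each contributing $O(\|g\|_\infty)$; a Doob/maximal inequality gives the uniform-in-$t$ control. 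The annihilation correction is handled exactly as in the first stage: the total discrepancy $\sup_t |\langle g, \tfrac1n(Y^n_t - Z^n_t)\rangle|$ is bounded by $2\|g\|_\infty \cdot \tfrac1n\#\text{(annihilated pairs by time }T)$, which is $o(1)$ in probability uniformly on compact time intervals. To pin down the continuity of limit points, note $t\mapsto u(t,\cdot)$ is continuous (the KRW has no fixed discontinuities, and $u(t,y)=\sum_x u_0(x)P_x(\tilde X_t=y)$ depends continuously on $t$ by dominated convergence).

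The main obstacle I anticipate is making the annihilation-error control fully rigorous and \emph{uniform in time} on compact intervals: one needs that the (random) number of annihilated pairs, divided by $n$, tends to zero in probability. The delicate point is that while any \emph{fixed} pair meets with probability $o(1)$ as their starting points become far apart, one is summing over $\binom{n+1}{2}$ pairs, so one needs the right quantitative decay — essentially that $\sum_{x,x'} |u_0(x)||u_0(x')| \, P_{x}(\text{KRW from }x\text{ meets independent KRW from }x') < \infty$ and that truncating the starting configuration makes the tail negligible. Because the KRW is killed at a rate bounded away from zero, the Green's function of the difference walk is integrable, so the meeting probability decays like (summable) $\times$ the heat kernel in $|x-x'|$, and the double sum converges; this is where the hypotheses $u_0\in\ell^1(\Z)$ and $\essinf\kappa>0$ (arranged in Section~\ref{sec:KRW}) are used. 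A secondary, more routine obstacle is verifying that convergence against a convergence-determining countable family of test functions, plus $C$-tightness, indeed yields Skorokhod convergence in probability to the deterministic limit — this is standard (e.g.\ Ethier--Kurtz / Billingsley) once tightness is in hand, since the limit is deterministic and continuous and all subsequential limits agree with it on a dense set of times.
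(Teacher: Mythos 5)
Your argument has a genuine gap at its core: the claim that $\E[\#\text{annihilated pairs}]/n \to 0$ is false, and the heuristic you give for it is backwards. The starting points $\mathcal{X}_i$ are i.i.d.\ samples from the \emph{fixed} probability measure $|u_0|$; summability of $u_0$ makes this measure tight, so the $n$ particles are crammed into an essentially bounded region, and two independent particles of opposite sign meet with probability bounded \emph{away from zero}, not tending to zero. Concretely, for $u_0=\tfrac12(\delta_0-\delta_1)$ one has $\sim n/2$ positive particles at $0$ and $\sim n/2$ negative particles at $1$; a fixed positive particle reaches site $1$ before dying with probability bounded below, and at that moment a not-yet-annihilated negative particle is still sitting there with high probability, so the expected number of annihilated pairs by time $1$ is of order $n$. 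Since both your fixed-time comparison of $\tfrac1n\langle g,Y^n_t\rangle$ with $\tfrac1n\langle g,Z^n_t\rangle$ and your uniform-in-time tightness bound rest on this $o(n)$ count, the proof as written does not go through.

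The correct mechanism (this is the heart of Lemma 4.1 in \cite{EW99}, which the paper invokes) is not that annihilations are rare but that they are \emph{invisible in expectation}. An annihilated pair is frozen at its common meeting site and contributes exactly zero to $Y^n_t$ (not a ``partial'' cancellation), while in $Z^n_t$ the two members continue from that same site as i.i.d.\ copies of the KRW by the strong Markov property; hence the pair's contribution $\mathcal{E}_i\bigl(g(\tilde X^i_t)-g(\tilde X^j_t)\bigr)$ to $\langle g, Z^n_t-Y^n_t\rangle$ has conditional mean zero given the meeting time. One therefore gets $\E[\langle g,Y^n_t\rangle]=\E[\langle g,Z^n_t\rangle]=(n+1)\langle g,u(t,\cdot)\rangle$ exactly, and concludes via a second-moment/variance estimate on the sum of these conditionally centered, bounded terms rather than via a count of annihilations. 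Your identification of the limit through $Z^n$ and the LLN, and the general strategy of finite-dimensional convergence plus $C$-tightness, are fine; but the bridge between $Y^n$ and $Z^n$ must be rebuilt on this exchangeability/martingale argument. Note also that the paper itself does not reprove this: it reduces the lemma to \cite{EW99} by checking that the time-inhomogeneous KRW induces a Feller semigroup, which is the one genuinely new ingredient needed here.
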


The proof of Lemma~\ref{lemma:TildeYn} is a straightforward adaptation of the
arguments in the proof of Lemma 4.1 in \cite{EW99}. The crucial observation,
which makes the adaptation to our setting straightforward, is that our KRW
induces a Feller semigroup $(\mathcal{P}_t)_{t \geq 0}$ of linear operators
on the space or continuous functions on $[0,\infty)\times \Z$. More
precisely, the family $(P_{(s,x),t})_{s,t\geq0, x \in \Z}$ of subprobability
measures on $[0,\infty)\times \Z$, given by
\begin{equation*}
  P_{(s,x),t}(A \times B)
  \coloneq\delta_{s+t}(A) P_x(\tilde{X}_{s+t} \in B | \tilde{X}_s=x),
\end{equation*}
induces a semigroup with the desired properties via
\begin{equation*}
  \mathcal{P}_t(f)(s,x)\coloneq\int P_{(s,x),t}(dz) f(z)
  = \sum_{y \in \Z} f(s+t,y) P_x(\tilde{X}_{s+t} = y | \tilde{X}_s =x).
\end{equation*}

\begin{proof}[Proof of Theorem~\ref{Thm:zero-crossings}]
  Building on the above results, we conclude the proof of
  Theorem~\ref{Thm:zero-crossings} via a minor modification of the arguments
  in Section 4 in \cite{EW99}.

  We begin by establishing \eqref{eq:u_t_vs_u_s}. By
  Lemma~\ref{lemma:TildeYn}, we can pick a subsequence
  $(\frac{1}{n_k}Y^{n_k})_{k \in \N}$ converging almost surely to
  $(u(t,\cdot))_{t \geq 0}$ and hence, by continuity of the
    function $t \mapsto u(t,\cdot) \in \overline{\mathcal{M}}$ with respect
    to the weak topology, such that for all $t \geq 0$ the random measures
  $(\frac{1}{n_k}Y_t^{n_k})_{k \in \N}$ converge almost surely to
  $u(t,\cdot)$. In particular, if $\Sigma(u(t,\cdot))= m$ then, almost
  surely, there exist $x_1<\dots<x_{m+1} \in \Z$ and a (random)
  $\mathcal{K}_0 \in \N$ such that
  \begin{equation*}
    \frac{Y_t^{n_k}(x_i)}{n_k}\frac{Y_t^{n_k}(x_{i+1})}{n_k}
    <0, \quad \text{ for } k \geq \mathcal{K}_0 \text{ and } 1 \leq i \leq m,
  \end{equation*}
  which implies that $\liminf_{k \rightarrow \infty} \Sigma(Y_t^{n_k}) \geq m$.
  Therefore, almost surely,
    \begin{equation*}
      \Sigma(u(t,\cdot)) \leq \liminf_{k \rightarrow \infty} \Sigma(Y_t^{n_k}).
    \end{equation*}
    By Lemma~\ref{lemma:zero-crossings}, the right-hand side is almost surely
    bounded by $\liminf_{k \rightarrow \infty}\Sigma(Y^{n_k}_0)$, which, by
    construction, coincides with $\Sigma (u_0)$.

  To prove the second part of the theorem, we first show the following result.

  \begin{claim}
    \label{claim}
    Under the assumptions of Theorem~\ref{Thm:zero-crossings}, if $u_0$ is
    such that $u_0(x)<0$ implies $u_0(y)\leq 0$ for each $y <x$  and
    $u_0(x)>0$ implies $u_0(y)\geq0$  for each  $y >x$, then $u(t,\cdot)$
    satisfies the same property for every $t \geq 0$.
  \end{claim}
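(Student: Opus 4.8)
The plan is to prove the claim by the same particle-system approximation used for the first part of the theorem, tracking not just the number of sign changes but the \emph{positional structure} of the signs in $\mathcal{S}(Y_t^{n})$. First I would observe that the hypothesis on $u_0$ says exactly that, for every $n$, the initial configuration $Y_0^n=\sum_{i=0}^n\mathcal{E}_i\delta_{\mathcal{X}_i}$ has the property that, when the particles are listed in increasing order of position, all negative signs precede all positive ones; equivalently, $\mathcal{S}(Y_0^n)$ is of the form $(-1,\dots,-1,+1,\dots,+1)$ (one block of each, either possibly empty). I will call such a signed measure \emph{monotone}. The goal is then to show that the annihilating dynamics preserve monotonicity of $\mathcal{S}(Y_t^n)$ for all $t\ge 0$, and then pass to the limit.

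The key step is the dynamical one: if $Y_s^n$ is monotone, then $Y_t^n$ is monotone for all $t\ge s$. By the piecewise-constant structure established in the proof of Lemma~\ref{lemma:zero-crossings}, it suffices to check this across a single jump. A jump that sends a particle to the cemetery $\dagger$ removes one sign from $\mathcal{S}(Y_s^n)$ and clearly cannot destroy monotonicity. A jump that is a nearest-neighbor move of a single (non-annihilating) particle does not change $\mathcal{S}(Y_s^n)$ at all, since the sign sequence only records order, and a nearest-neighbor step cannot make a particle overtake another particle carrying a different sign without a collision occurring first --- here I would invoke, as the paper does, that nearest-neighbor motion plays the role of path continuity. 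The only remaining case is an annihilation: a particle with sign $+1$ (resp.~$-1$) jumps onto a site occupied by a particle with sign $-1$ (resp.~$+1$), and the pair is frozen; this removes two consecutive opposite signs from $\mathcal{S}(Y_s^n)$. Since a string of the form $(-1,\dots,-1,+1,\dots,+1)$ has at most one place where consecutive signs differ, deleting such a pair again yields a string of the same form (possibly with one block shrunk, possibly emptied). In every case $\mathcal{S}(Y_t^n)$ remains monotone. The subtle point --- and the step I expect to be the main obstacle --- is ruling out the configuration where, say, a $+1$-particle sits strictly to the left of a $-1$-particle with no intervening particle of the other type; this cannot arise from a monotone configuration precisely because to create it, a particle would have to jump \emph{past} a particle of opposite sign, but the nearest-neighbor constraint forces it first to land \emph{on} that particle, triggering annihilation instead. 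Making this ``no overtaking without collision'' argument airtight, in the presence of multiple particles possibly stacked at one site, is where care is needed; one handles it by noting that whenever a particle jumps onto an occupied site, it resolves (either by annihilation, reducing the count, or by coexistence with like-signed particles, which does not affect $\mathcal{S}$) before any further jump, since jumps occur at distinct times a.s.

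Finally I would transfer monotonicity to $u(t,\cdot)$ by the convergence in Lemma~\ref{lemma:TildeYn}. Fix $t\ge 0$ and pass to an almost surely convergent subsequence $\frac1{n_k}Y_t^{n_k}\to u(t,\cdot)$ weakly on $\overline{\mathcal{M}}$. Suppose, for contradiction, that $u(t,\cdot)$ is not monotone: then there exist $x<y$ with $u(t,x)>0$ and $u(t,y)<0$. Weak convergence of the (signed) measures, evaluated at the atoms $x$ and $y$ (or at small neighborhoods thereof, using that we are on $\Z$), gives $\frac1{n_k}Y_t^{n_k}(x)>0>\frac1{n_k}Y_t^{n_k}(y)$ for all large $k$, so for such $k$ the configuration $Y_t^{n_k}$ has a positive-signed particle at $x$ strictly to the left of a negative-signed particle at $y$, contradicting monotonicity of $\mathcal{S}(Y_t^{n_k})$. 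Hence $u(t,\cdot)$ is monotone, which is exactly the asserted property. To finish the proof of the second part of Theorem~\ref{Thm:zero-crossings}, I would combine Claim~\ref{claim} with \eqref{eq:u_t_vs_u_s}: the claim guarantees that $\{y:u(t,y)<0\}$ is a down-set and $\{y:u(t,y)>0\}$ is an up-set in $\Z$, and $\Sigma(u(t,\cdot))\le\Sigma(u_0)=1$ together with $u_0\not\equiv 0$ (hence $u(t,\cdot)\not\equiv 0$, by the strict positivity of the killed-random-walk transition kernel) forces both sets to be nonempty unless one has become all of the relevant half-line; in any case they are of the form $\{y\le a_t\}$ and $\{y\ge b_t\}$ with $a_t<b_t$ in $\Z\cup\{-\infty,\infty\}$.
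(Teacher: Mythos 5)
Your proof of the claim itself is correct and follows essentially the same route as the paper: the paper simply invokes the substring property $\mathcal{S}(Y_t^n)\preceq\mathcal{S}(Y_0^n)$ from Lemma~\ref{lemma:zero-crossings} (a $(+1,-1)$ substring at time $t$ would lift to one at time $0$), whereas you re-derive the preservation of the monotone sign pattern jump by jump, which is the same content. One caution about your closing paragraph, which goes beyond the claim: the claim does \emph{not} directly give that $\{y:u(t,y)>0\}$ is an up-set, since it permits configurations such as $u(t,x)=0$ with $u(t,x\pm1)>0$; the paper closes exactly this gap with a separate perturbation argument (adding $-\varepsilon\bbone_x$ to the initial condition and reapplying the claim), so that step would still need to be supplied.
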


  Even though this property seems to be a direct consequence of
  \eqref{eq:u_t_vs_u_s}, we need to rule of the possibility that the two
  signs in $\mathcal{S}(\sgn(u(t,\cdot)))$ change order in time.

  \begin{proof}[Proof of Claim~\ref{claim}]
    We argue by contradiction, and assume that $u(t,x)>0>u(t,y)$ for some
    $y>x$. Then, by repeating the above arguments, $Y_t^{n}(x)>0>Y_t^{n}(y)$
    for sufficiently large $n \in \N$, almost surely. In particular,
    $(+1,-1) \preceq \mathcal{S}(Y_t^n)$. Since
    $\mathcal{S}(Y_t^n) \preceq \mathcal{S}(Y_0^n)$ by
    Lemma~\ref{lemma:zero-crossings}, one can find $y_0>x_0$ with
    $Y_0^{n}(x_0)>0>Y_0^{n}(y_0)$. By construction of $Y_0^n$, this
    contradicts the assumptions on $u_0$.
  \end{proof}

  We now use this claim to derive a stronger result, namely that $u(t,y) >0$
  implies $u(t,x) >0$ for all $x>y$. Assume by contradiction that there
  exists $x>y$ with $u(t,x) = 0 < u(t,y)$ (by Claim~\ref{claim}, $u(t,x)$
    must be non-negative). If we set
  \begin{equation*}
    0<\varepsilon \coloneq
    \begin{cases}
      \frac{1}{2}u(t,y)\wedge u_0(x), & \text{ if } u_0(x)> 0 \\
      \frac{1}{2}u(t,y), & \text{ if } u_0(x) \leq 0
    \end{cases}
  \end{equation*}
  and consider the solution
  \begin{equation*}
    u^\varepsilon(t, \cdot)
    =-\varepsilon P_x(\tilde{X}_t= \cdot\ ) \in (-\varepsilon,0), \quad t >0,
  \end{equation*}
  of \eqref{eq:main_2} with initial condition
  $u_0^\varepsilon\coloneq-\varepsilon\bbone_x$, then
  $u(t,\cdot)+u^\varepsilon(t,\cdot)$ describes a solution of
  \eqref{eq:main_2} with initial condition
  $u_0+u_0^\varepsilon \in \ell^1(\Z)$. By the construction of $\varepsilon$
  and the assumptions on $u_0$, $u_0+u_0^\varepsilon \in \ell^1(\Z)$
  satisfies the assumptions of Claim~\ref{claim}. However, since
  $u(t,y)+u^\varepsilon(t,y) \geq \frac{1}{2}u(t,y)>0$ and
  $u(t,x)+u^\varepsilon(t,x) <0$, the claim yields a contradiction.
  Therefore, $\{x: u(t,x)>0\}$ is necessarily a (possibly empty) set of the
  form $\{b_t,b_t+1,\dots,\}$. A similar argument can be carried out for the
  negative case. This concludes the proof of
    Theorem~\ref{Thm:zero-crossings}.
\end{proof}

We conclude with a few remarks on possible generalizations of
Theorem~\ref{Thm:zero-crossings}. Replacing $\tilde{X}$ with a killed simple
random walk with non-zero drift and arbitrary jump rate does not affect the
arguments presented above. In particular, Theorem~\ref{Thm:zero-crossings}
can be extended to a broader class of discrete differential equations. More
precisely, if $\alpha>0$ and $\beta \in (-\alpha,\alpha)$ then one can
replace equation \eqref{eq:main_2} in the theorem with
\begin{equation*}
  \partial_tu(t,x) = \alpha \Delta_d u(t,x)+\beta \partial_xu(t,x)-\kappa(t,x)u(t,x),
\end{equation*}
where $\partial_xu(t,x)=u(t,x+1)-u(t,x-1)$. The nearest-neighbor killed
random walk corresponding to this differential equation is the one which
jumps with rate $2\alpha$, has drift $-\beta/\alpha$, and is killed with
rate $\kappa(t,x)$.

On the other hand, the restriction to nearest-neighbor jumps is crucial for
the theorem to hold without additional assumptions on $u_0 \in \ell^1(\Z)$.
Dropping this condition allows simple counterexamples in which the number of
zero-crossings increases over time. For instance, if $u_0=\bbone_{\{0\}}-\bbone_{\{1\}}$
and the random walk satisfies
$\{x\} \subsetneq \text{supp}(P_x(\tilde{X}_t = \cdot \ )) \subset x+2\Z$ for
$x \in \{0,1\}$, then $\Sigma(u(t,\cdot)) > 1 = \Sigma(u_0)$.

\providecommand{\bysame}{\leavevmode\hbox to3em{\hrulefill}\thinspace}
%define \MR and \MRhref to do something useful
\providecommand\MR{}
\renewcommand\MR[1]{\relax\ifhmode\unskip\spacefactor3000
\space\fi \MRhref{#1}{#1}}
\providecommand\MRhref{}
\renewcommand{\MRhref}[2]%
{\href{http://www.ams.org/mathscinet-getitem?mr=#1}{MR#2}}
\providecommand{\href}[2]{#2}

\providecommand{\arxiv}[1]{Preprint, available at \href{http://arxiv.org/abs/#1}{arXiv:#1}}

\end{document}